\patchcmd{\thmhead}{(#3)}{#3}{}{}
\renewcommand\emptyset{\varnothing}
\renewcommand\phi{\varphi}
\newcommand\Z{\mathbb{Z}}               
\newcommand\R{\mathbb{R}}
\newcommand{\halflozenge}{
    \begin{tikzpicture}
    \draw[semithick, scale=0.2] (-0.5, 0) -- (0, -0.75)-- (.5, 0)-- (0, .75)-- 
    (-.5, 0);
    \fill[thin, fill=black, scale=.2] (0, -.75) -- (.5, 0)-- (0, .75)-- (0, 
    -.75);
    \end{tikzpicture}
}
\newcommand\half[2]{\mathbb{H}_{#1}{#2}}
\newcommand\Pol[1]{\mathcal{P}(#1)}
\newcommand\ehrval{\varepsilon}
\newcommand\id{\mathsf{id}}
\newcommand\Ehr{\mathsf{Ehr}}
\newcommand\ehr{\mathsf{ehr}}
\newcommand\des{\mathsf{des}}
\newcommand\Des{\mathsf{Des}}
\newcommand\floor[1]{\left\lfloor {#1} \right\rfloor}
\def\x{\mathbf x}
\def\piep{\pi, \epsilon}
\def\sigep{\sigma, \epsilon}
\def\zon{Z}
\DeclareMathOperator*{\relint}{relint}
\DeclareMathOperator*{\aff}{aff}
\DeclareMathOperator*{\conv}{conv}
\DeclareMathOperator*{\IP}{IP}
\DeclareMathOperator*{\bIP}{\overline{IP}}
\newtheorem{thm}{Theorem}[section]
\newtheorem*{thm*}{Theorem}
\newtheorem{cor}[thm]{Corollary}
\newtheorem{lem}[thm]{Lemma}
\newtheorem{prop}[thm]{Proposition}
\newtheorem{conj}{Conjecture}
\newtheorem{problem}{Problem}
\theoremstyle{definition}
\newtheorem{rem}[thm]{Remark}
\title{$h^\ast$-polynomials of zonotopes}
\author{Matthias Beck}
\address{Department of Mathematics, %
San Francisco State University, %
USA}
\email{mattbeck@sfsu.edu}
\author{Katharina Jochemko}
\address{Department of Mathematics, %
Royal Institute of Technology (KTH), %
Sweden}
\email{jochemko@kth.se}
\author{Emily McCullough}
\address{Department of Mathematics, %
San Francisco State University, %
USA}
\email{emac@mail.sfsu.edu}
\keywords{Ehrhart polynomials, $h^\ast$-polynomials, zonotopes, unimodality, real-rooted polynomials, combinatorial positive valuations}
\subjclass[2010]{05A05, 05A15, 26C10, 52B20, 52B40, 52B45}
\date{\today}
\begin{document}

\maketitle

\begin{abstract}
The Ehrhart polynomial of a lattice polytope $P$ encodes information about the number of 
integer lattice points in positive integral dilates of $P$.
The $h^\ast$-polynomial of $P$ is 
the numerator polynomial of the generating function of its Ehrhart polynomial. 
A zonotope is any projection of a higher dimensional cube. We give a 
combinatorial description of the $h^\ast$-polynomial of a lattice 
zonotope in terms of refined descent statistics of permutations and prove that 
the $h^\ast$-polynomial
of every lattice zonotope has only real roots and therefore unimodal 
coefficients. Furthermore, we present a closed formula for the 
$h^\ast$-polynomial of a zonotope in matroidal terms which is 
analogous to a result by Stanley (1991) on the Ehrhart polynomial. Our results 
hold not only for 
$h^\ast$-polynomials but carry over 
to general combinatorial positive valuations. Moreover, we give a complete 
description of 
the convex hull of all $h^\ast$-polynomials of zonotopes in a given dimension: 
it is a simplicial cone spanned by refined Eulerian polynomials.
\end{abstract}


\section{Introduction}\label{sec:intro}

The {\bf Ehrhart function} $\ehr _P (n)$ of a polytope $P$ records the number of integer lattice points in the $n$-th positive integer dilate of the polytope. 
If $P$ is a {\bf lattice polytope} (i.e., the vertices of $P$ have all integer coordinates), Ehrhart~\cite{ehrhartpolynomial} showed that this function is in fact a 
polynomial---the \textbf{Ehrhart polynomial} of the polytope. 

A fundamental class of polytopes are zonotopes, which make an appearance in various areas of mathematics. Besides geometry and combinatorics, they play a role, for example, in
approximation theory, optimization, and crystallography. 
Given a set of vectors $V=\{v_1,\ldots, v_n\}\in \mathbb{R}^d$,
\[
\zon \ = \ \left\{ \sum _{i=1}^n \lambda _i v_i : \, 0\leq \lambda _i \leq 1 \right\}
\]
defines the {\bf zonotope} generated by $V$, and up to translation, every zonotope is generated that way.
Stanley~\cite{stanley} showed that the Ehrhart polynomial of a 
\emph{lattice} zonotope (i.e., when $v_1, \dots, v_n \in \Z^d$) is given by the 
following beautiful combinatorial formula.

\begin{thm}[{\cite{stanley}}]\label{thm:stanleyEhrhart}
Let $\zon$ be a lattice zonotope generated by a set of vectors $V\subseteq \mathbb{Z}^d$. Then
\[
\ehr_\zon (n) \ = \ \sum_{I} g(I) \, n^{|I|}
\]
where $I$ ranges over all linearly independent subsets of $V$, and $g(I)$ denotes the greatest common divisor of all maximal minors of the matrix with column vectors~$I$.
\end{thm}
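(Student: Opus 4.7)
The plan is to decompose the zonotope $\zon$ into half-open parallelepipeds, one for each linearly independent subset of $V$, and then count lattice points in each dilated piece.

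The geometric input is Shephard's classical half-open tiling of a zonotope. Choose a linear functional $\ell : \R^d \to \R$ that does not vanish on any $v \in V$, and use it to orient the generators; one obtains a disjoint decomposition
\[
\zon \ = \ \bigsqcup_{I} \Pi_I,
\]
where $I$ ranges over the linearly independent subsets of $V$ (including $I = \emptyset$) and each $\Pi_I$ is a translate of the half-open parallelepiped $\bigl\{\sum_{v_j \in I} \lambda_j v_j : 0 \leq \lambda_j < 1\bigr\}$, with faces included or excluded according to the signs $\ell(v_j)$. The crucial point is that the translation vectors are integer combinations of the $v_i \notin I$, hence elements of $\Z^d$.

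Scaling by $n$ gives $n\zon = \bigsqcup_I n\Pi_I$, and since lattice translations preserve lattice-point counts, counting lattice points in $n\Pi_I$ reduces to counting them in the half-open parallelepiped
\[
P_I(n) \ := \ \bigl\{\sum_{v_j \in I} \lambda_j v_j \, : \, 0 \leq \lambda_j < n\bigr\}.
\]
Let $L_I$ denote the $\Z$-span of $I$ and $\Lambda_I := \Z^d \cap \mathrm{lin}(I)$ the full-rank sublattice of the real linear span of $I$. Then $P_I(n)$ is a fundamental domain for the action of $nL_I$ on $\mathrm{lin}(I)$, so the number of lattice points it contains equals
\[
[\Lambda_I : n L_I] \ = \ n^{|I|} \, [\Lambda_I : L_I].
\]
A standard Smith-normal-form argument identifies $[\Lambda_I : L_I]$ with the gcd $g(I)$ of the maximal minors of the matrix whose columns are the elements of $I$. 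Summing over $I$ yields $\ehr_\zon(n) = \sum_I g(I) \, n^{|I|}$ for every positive integer $n$, hence as polynomials.

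The main obstacle will be Shephard's half-open tiling itself: one must verify disjointness, ensure that the translations defining the $\Pi_I$ are lattice vectors so that lattice-point counts are preserved, and carefully track which faces of each parallelepiped are open and which are closed. By comparison, the arithmetic identification of $[\Lambda_I : L_I]$ with $g(I)$ is routine once the geometric decomposition is in place.
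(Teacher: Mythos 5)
Your proposal is correct and follows the same route the paper implicitly relies on: the half-open decomposition $Z = \biguplus_{I\in\mathcal{I}}\Pi(I)$ (the paper records this as Lemma~\ref{lem:decompstanley}, attributed to Stanley with a proof by Ziegler), followed by the lattice-point count $|\Pi(I)\cap\Z^d| = [\Lambda_I : L_I] = g(I)$ via Smith normal form. The only caveat is attributional rather than mathematical: Shephard's theorem (Theorem~\ref{thm:Shephard}) gives the closed subdivision into parallelepipeds, while the half-open refinement with lattice translation vectors is the Stanley--Ziegler lemma; your argument correctly uses the latter.
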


A central problem, which is wide open already in dimension 3, is to characterize Ehrhart polynomials. An important tool here is the $h^\ast$-polynomial of a $d$-dimensional
lattice polytope, which encodes its Ehrhart polynomial $\ehr_P(n)$ in the basis $\binom n d$, $\binom{n+1} d$, \dots, $\binom{n+d} d$
(we give the details in Section~\ref{sec:valuations} below). A fundamental result of Stanley \cite{stanleydecomp} says that the coefficients of the
$h^\ast$-polynomial are always nonnegative integers. This set the stage for intensive studies on the inequality relations among the coefficients of $h^*$-polynomials, which remains an active area of research.

There is an entire hierarchy of conjectures concerning unimodal
$h^\ast$-polynomials. 
(A polynomial $h(t)=\sum _{i=0} ^{d} h_i t^i$ is \textbf{unimodal} if
$
	h_0 \leq h_1 \leq \dots \leq h_k \geq \dots \geq h_d
$
for some $k \in \{ 0, 1, \dots , d \}$.)
A well-known conjecture due to Stanley \cite{stanley1989log} was
originally formulated in the language of commutative algebra and implies that the $h^\ast$-polynomial of any polytope having the IDP property has unimodal coefficients. 
(A lattice polytope $P\in \mathbb{R}^d$ has the {\bf Integer Decomposition Property (IDP)} if for all integers $n\geq 1$ and every $p\in nP\cap \mathbb{Z}^d$ there are $p_1,\ldots , p_n \in P\cap
\mathbb{Z}^d$ such that $p=p_1+\cdots +p_n$.) 
Stanley's full conjecture was that standard graded Cohen--Macaulay integral domains have unimodal $h$-vectors; it has since been disproved, though the implication concerning $h^\ast$-polynomials of polytopes having the IDP property remains open; see, e.g., \cite{braunsurvey} for
background and references.

As an important non-trivial instance of Stanley's conjecture in the above form, Schepers and Van Langenhoven \cite{svl} proved that the coefficients of the $h^\ast$-polynomial for a lattice parallelepiped are unimodal.
We follow their route and investigate, more generally, $h^\ast$-polynomials of lattice zonotopes. We give a combinatorial interpretation by showing that the $h^\ast$-polynomial of
any zonotope is a weighted sum of certain polynomials $A_1(d+1,t),\ldots, A_{d+1}(d+1,t)$ originally introduced by Brenti and Welker \cite{brentiwelker}. These polynomials record the
distribution of refined descent statistics on permutations and play a central role for computing $h$-polynomials of barycentric subdivisions; see Section~\ref{sec:typeadescent}
for a detailed definition. We give a geometric interpretation for $A_{j+1}(d+1,t)$ as the $h^\ast$-polynomial of a half-open $d$-dimensional unit cube with $j$ facets removed (Theorem \ref{thm:hvectorunitcube} below). As a corollary we obtain a new characterization of reflexive polytopes in terms of Ehrhart polynomials (Proposition \ref{prop:reflexive}). We consider, more generally, half-open parallelepipeds and,
using a result of Savage and Visontai~\cite{SavageVisontai}, we prove that the $h^\ast$-polynomial of every half-open parallelepiped is real-rooted and thus unimodal
(Corollary~\ref{cor:unimodhalfopen}). This way we also obtain a new proof of a result by Br\"and\'en~\cite{branden2006linear} (Corollary~\ref{cor:branden}).
Moreover, we show that the peak of unimodality of the $h^\ast$-vector is in the 
middle. Using half-open decompositions, we then show that our results extend to zonotopes.

Our results hold not only for counting lattice points in polytopes, but for $h^\ast$-polynomials $h^\phi (P)(t)$ with respect to arbitrary combinatorial positive valuations,
initiated and studied by Jochemko and Sanyal in~\cite{jochemko2015combinatorial}; we carefully introduce the relevant terminology in Section~\ref{sec:valuations}. Our first main theorem is the following.
    \begin{thm}\label{thm:unimodalzonotope}
        Let $\varphi$ be a combinatorially positive valuation 
        and 
        let $\zon$ be an $d$-dimensional lattice zonotope.
        Then the $h^\ast$-polynomial $h^{\phi} 
        (\zon)(t)=h_0+h_1t+\cdots +h_dt^d$ has only real roots. Moreover,
        \[
        h_{0}\leq \dots \leq h _{\frac{d}{2}} 
        \geq \dots \geq h _{d} \quad \text{ if } d \text{ is even}
        \]
        and
        \[
        h _{0}\leq \dots \leq h _{\frac{d-1}{2}} 
        \quad \text{ and } \quad h _{\frac{d+1}{2}} \geq \dots \geq 
        h _{d} \quad \text{ if } d \text{ is odd. }
        \]
    \end{thm}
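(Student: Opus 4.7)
The plan is to reduce the statement for zonotopes to the half-open parallelepiped case already handled in Corollary \ref{cor:unimodhalfopen}. Every lattice zonotope $\zon$ admits a half-open polyhedral decomposition whose cells are lattice half-open parallelepipeds $\Pi_1, \ldots, \Pi_N$; this is the geometric counterpart of Stanley's Theorem \ref{thm:stanleyEhrhart}. Because $\varphi$ is a combinatorial positive valuation, the $h^\varphi$-polynomial is additive on such disjoint decompositions, so
\[
h^\varphi(\zon)(t) \ = \ \sum_{i=1}^{N} h^\varphi(\Pi_i)(t).
\]

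By the combinatorial description of $h^\varphi$ for half-open parallelepipeds established earlier in the paper, each summand is a nonnegative linear combination of the Brenti--Welker polynomials $A_1(d+1, t), \ldots, A_{d+1}(d+1, t)$. Summing and collecting, we obtain
\[
h^\varphi(\zon)(t) \ = \ \sum_{j=1}^{d+1} c_j \, A_j(d+1, t), \qquad c_j \geq 0.
\]
Real-rootedness then follows from the Savage--Visontai input already used in the proof of Corollary \ref{cor:unimodhalfopen}: the family $\{A_j(d+1, t)\}_j$ is pairwise compatible in Wagner's sense, so every nonnegative combination of its members is real-rooted. Log-concavity and unimodality of the coefficient sequence $(h_0, \ldots, h_d)$ are then immediate consequences.

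It remains to pin down the peak location, which I expect to be the main obstacle. Real-rootedness together with nonnegativity yields only unimodality, not the mode. To upgrade this to the claimed middle peak, I would verify that each generator $A_j(d+1, t)$ of the cone satisfies the coefficientwise monotonicity $[t^i] A_j(d+1, t) \leq [t^{i+1}] A_j(d+1, t)$ for $i < \lfloor d/2 \rfloor$, and the symmetric decreasing inequalities for $i \geq \lceil d/2 \rceil$. These inequalities, which can be read off the refined descent-statistic description of the $A_j(d+1, t)$ recalled in Section \ref{sec:typeadescent}, pass to arbitrary nonnegative combinations and hence transfer to $h^\varphi(\zon)(t)$, yielding the desired statements in both the even and odd cases.
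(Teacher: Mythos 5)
Your proposal follows essentially the same route as the paper: decompose the zonotope into half-open parallelepipeds (this is Proposition~\ref{lem:partition}, built from Shephard's theorem and Lemma~\ref{lem:ho}), write each piece's $h^\ast$-polynomial as a nonnegative combination of the $(A,j)$-Eulerian polynomials $A_j(d+1,t)$ via Theorem~\ref{thm:halfopenparallelepipeds}, deduce real-rootedness from Savage--Visontai (Theorem~\ref{thm:jEulerianrealrooted}), and pin down the peak from the individual peak locations of each $A_j(d+1,t)$. This is exactly what the paper's proof does through Corollary~\ref{cor:unimodhalfopen}.

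One caveat worth flagging: the peak-location inequalities for $A_j(d+1,t)$ are \emph{not} something that can simply be ``read off the refined descent-statistic description.'' Establishing that $A_j(d+1,t)$ is coefficientwise nondecreasing up to $\lfloor d/2\rfloor$ and nonincreasing from $\lceil d/2\rceil$ is the content of Theorem~\ref{thm:jEulerian_unimodality}, whose proof is a nontrivial induction on $d$ using the Brenti--Welker recursion (Lemma~\ref{lem:jEulerian_recursion}) and the symmetry $A_j(d,t)=t^{d-1}A_{d+1-j}(d,\tfrac{1}{t})$ (Lemma~\ref{lem:jEulerian_symmetry}); no direct bijective argument from the descent statistic is known. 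You correctly identified this as the crux of the peak claim, but make sure to actually invoke or reproduce that inductive argument rather than treating it as immediate.
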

Our second main result gives a simple description of the convex hull of all $h^\ast$-polynomials of $d$-dimensional lattice zonotopes. 
\begin{thm}\label{thm:hconvexhull}
    Let $d\geq 1$. The convex hull of the $h^\ast$-polynomials of all 
    $d$-dimensional lattice zonotopes (viewed as points in $\R^{ d+1 }$) and 
    the convex hull of the
$h^\ast$-polynomials of all $d$-dimensional lattice parallelepipeds are both equal to the $d$-dimensional simplicial cone
    \[
    A_1(d+1,t) + \mathbb{R}_{\geq 0} \, A_2 (d+1,t) + \cdots + \mathbb{R}_{\geq 0} \, A_{d+1} (d+1,t) \, .
    \]
\end{thm}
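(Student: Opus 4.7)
The plan is to prove the two containments separately. For the direction ``convex hull $\subseteq$ cone'', the proof of Theorem~\ref{thm:unimodalzonotope} expresses $h^\ast(\zon)(t) = \sum_{j=1}^{d+1} c_j A_j(d+1, t)$ with coefficients $c_j \geq 0$ for every lattice zonotope $\zon$. Among the refined Eulerian polynomials, only $A_1(d+1, t)$ has a nonzero constant term (namely $1$), since the identity is the unique descent-free permutation in $S_{d+1}$ and has $\pi(1) = 1$. Combined with $h_0^\ast(\zon) = 1$ for $d$-dimensional lattice polytopes, this forces $c_1 = 1$, so $h^\ast(\zon)$ lies in the stated cone. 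Since the cone is convex, the convex hull of all such $h^\ast$-polynomials (of zonotopes, and similarly of parallelepipeds) is contained in the cone.

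For the direction ``cone $\subseteq$ convex hull'', I would construct, for each $j \in \{2, \ldots, d+1\}$ and each integer $k \geq 1$, a $d$-dimensional lattice parallelepiped $P_{j,k}$ with
\[
    h^\ast(P_{j,k})(t) \ = \ A_1(d+1, t) + (k-1) A_j(d+1, t).
\]
A natural candidate is the parallelepiped generated by $e_1, e_2, \ldots, e_{d-1}$ together with $v_{j,k} = e_1 + e_2 + \cdots + e_{j-2} + k e_d$. Applying Stanley's formula (Theorem~\ref{thm:stanleyEhrhart}): every subset of the generators is linearly independent, and an inspection of maximal minors shows that a subset $I$ containing $v_{j,k}$ has $g(I) = k$ exactly when $\{e_1, \ldots, e_{j-2}\} \subseteq I$ and $g(I) = 1$ otherwise, while subsets not containing $v_{j,k}$ all have $g(I) = 1$. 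Summing and applying the binomial theorem yields
\[
    \ehr_{P_{j,k}}(n) \ = \ (1+n)^d + (k-1) \, n^{j-1}(1+n)^{d+1-j}.
\]
The first summand is the Ehrhart polynomial of $[0,1]^d$, whose $h^\ast$-polynomial is $A_1(d+1, t)$; the second is the Ehrhart polynomial of the $d$-dimensional half-open cube $[0,1)^{j-1} \times [0,1]^{d+1-j}$, whose $h^\ast$-polynomial equals $A_j(d+1, t)$ by Theorem~\ref{thm:hvectorunitcube} (as $j-1$ facets have been removed). Linearity of $\ehr \mapsto h^\ast$ at fixed dimension then gives the claimed $h^\ast$-formula for $P_{j,k}$.

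To realize any point $A_1 + \sum_{j=2}^{d+1} c_j A_j$ of the cone (with $c_j \geq 0$) as a convex combination, pick any integer $M$ with $M \geq \sum_{j=2}^{d+1} c_j$. Then weights $c_j/M$ on $h^\ast(P_{j, M+1})$ for $j \in \{2, \ldots, d+1\}$, together with residual weight $1 - \sum_j c_j/M \geq 0$ on $h^\ast([0,1]^d) = A_1(d+1, t)$, form a convex combination equal to the given point. Since $[0,1]^d$ and each $P_{j,k}$ are lattice parallelepipeds (hence also zonotopes), this establishes the cone as the convex hull for both zonotopes and parallelepipeds. The main obstacle is the Ehrhart computation for $P_{j,k}$ and its passage to the $A_j(d+1,t)$-basis; once the explicit formula for $\ehr_{P_{j,k}}(n)$ is in hand, the combinatorial structure of $v_{j,k}$ reduces the gcd bookkeeping in Stanley's sum to a simple subset-containment check, and Theorem~\ref{thm:hvectorunitcube} does the rest.
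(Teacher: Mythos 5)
Your proposal follows essentially the same route as the paper: both directions use the half-open parallelepiped decomposition from the proof of Theorem~\ref{thm:unimodalzonotope} for the containment $\subseteq$, and an explicit one-parameter family of parallelepipeds (one ``fat'' generator coupled with standard basis vectors) for the containment $\supseteq$. Your variation for pinning down the coefficient of $A_1(d+1,t)$ is also fine --- the paper observes that each half-open decomposition contains exactly one closed cell contributing $b_\ehrval(\emptyset)=1$, while you observe that $A_1(d+1,t)$ is the unique $(A,j)$-Eulerian polynomial with nonzero constant term and invoke $h_0^*=1$; both are valid. Likewise your parallelepiped $P_{j,k}$ and its Ehrhart computation via Theorem~\ref{thm:stanleyEhrhart} and Corollary~\ref{cor:Ehrhartfunctioncube} are a correct variant of the paper's $P_{k,m}$ (computed there via Lemma~\ref{lem:boxpartition} and Theorem~\ref{thm:halfopenparallelepipeds}), and your explicit convex-combination argument spells out the final step the paper states more briefly.

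There are two points to fix. First, a genuine (if small) gap: the theorem asserts the convex hull is a \emph{$d$-dimensional simplicial} cone, so you must show that $A_1(d+1),\dots,A_{d+1}(d+1)$ are linearly independent (equivalently, that the $d$ rays $A_2,\dots,A_{d+1}$ are linearly independent). The paper does this by noting that $\{n^j(1+n)^{d-j}\}_{j=0}^d$ is a basis for polynomials of degree at most $d$, that passing to $h^*$ is an invertible linear change of basis, and that $h^\ehrval(C_j^d)(t)=A_{j+1}(d+1,t)$ by Theorem~\ref{thm:hvectorunitcube}; your argument omits this entirely. Second, a notational slip in the $\subseteq$ direction: the identity in $S_{d+1}$ is the unique permutation with $\des=0$, and the reason it sits in the $j=1$ class is that its \emph{last} letter is $\sigma_{d+1}=d+1$ (matching $\sigma_{d+1}=(d+1)+1-j$ with $j=1$), not that $\sigma_1=1$. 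The conclusion $a_j(d+1,0)=\delta_{j,1}$ is still correct, but the stated justification points at the wrong position in the word.
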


Our third line of research concerns type-$B$ zonotopes and coloop-free zonotopes, introduced in Section~\ref{sec:altincr}, which we believe are both interesting in their  own right.
Schepers and Van Langenhoven~\cite{svl} conjectured that every polytope having the IDP property and an interior lattice point has an alternatingly increasing
$h^\ast$-polynomial, a property stronger than unimodality. We give further evidence for their conjecture by proving it for type-$B$ and coloop-free zonotopes
(Corollary~\ref{cor:Ehrhart_lcs} and Theorem~\ref{thm:coloopfree}). We relate the
$h^\ast$-polynomial of type-$B$ zonotopes to type-$B$ Eulerian polynomials via discrete geometry. Again our results hold in the more general context of translation-invariant
valuations.  We introduce refined type-$B$ Eulerian polynomials, and by expressing them in
terms of $A_1(d+1,t),\ldots, A_{d+1}(d+1,t)$ we prove that these refined 
Eulerian polynomials are real-rooted 
(Theorem~\ref{thm:typeBalternatinglyincr}), generalizing a result of Brenti~\cite{brenti_qEulerian}. 

Our final main result is a closed formula for the 
$h^\ast$-polynomial of a lattice zonotope in the spirit of Theorem 
\ref{thm:stanleyEhrhart}.
\begin{thm}\label{thm:zonotopevaluation}
    Let $Z$ be a $d$-dimensional lattice zonotope generated by a set of vectors 
    $V\subset \mathbb{Z}^d$, and let $\varphi$ be a 
    translation-invariant valuation. Then 
    \[
    h^\phi(Z)(t)
    \ = \ \sum _{I\in \mathcal{I}}b_\phi(I)\sum_{B\in \mathcal{B}\atop 
    B\supseteq  
        I}A_{|I\cup \IP 
        (B)|+1}(d+1,t) \, .
    \]
\end{thm}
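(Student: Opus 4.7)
The plan is to mirror Stanley's derivation of Theorem~\ref{thm:stanleyEhrhart}: combine a half-open polyhedral decomposition of the zonotope with additivity of the valuation, then invoke Theorem~\ref{thm:hvectorunitcube} to identify the refined Eulerian polynomials $A_j(d+1,t)$ as the fundamental building blocks. Fixing a total order on $V$, Shephard's half-open decomposition expresses
\[
Z \ = \ \bigsqcup_{B\in \mathcal{B}} P_B,
\]
indexed by the bases of the matroid of $V$, where $P_B$ is a translate of the parallelepiped generated by $B$, made half-open by removing precisely the facets corresponding to the internally passive elements $\IP(B)$. This is the very decomposition underlying Stanley's lattice-point count. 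Since $\phi$ is translation-invariant and therefore additive on disjoint unions of relatively open lattice polytopes, we obtain
\[
h^\phi(Z)(t) \ = \ \sum_{B\in \mathcal{B}} h^\phi(P_B)(t).
\]

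The second step is to compute $h^\phi(P_B)$ for each basis $B$. When $B$ is a $\Z$-basis of the sublattice it spans with $g(B)=1$, the piece $P_B$ is a half-open unit parallelepiped with $|\IP(B)|$ facets removed, and Theorem~\ref{thm:hvectorunitcube} identifies its $h^\phi$-polynomial with a multiple of $A_{|\IP(B)|+1}(d+1,t)$. In general, $P_B$ contains $g(B)$ fundamental domains for the sublattice spanned by $B$, and one further subdivides $P_B$ along this sublattice structure, tracking how $\phi$ evaluates on each sub-piece. The aim is to establish the refined identity
\[
h^\phi(P_B)(t) \ = \ \sum_{I\subseteq B} b_\phi(I)\, A_{|I\cup \IP(B)|+1}(d+1,t),
\]
in which every subset $I\subseteq B$ (necessarily independent, as $B$ is) contributes the valuation-theoretic weight $b_\phi(I)$---playing the role of $g(I)$ in Stanley's theorem---together with a refined Eulerian polynomial whose index combines $|I|$ and $|\IP(B)|$, reflecting the interaction between the sub-parallelepiped generated by $I$ and the facets already removed from $P_B$.

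Substituting this refinement into the sum over bases and swapping the order of summation yields the theorem: for each independent set $I\in \mathcal{I}$ the inner sum ranges over bases $B\supseteq I$, and the weight $b_\phi(I)$ factors out. I expect the main obstacle to lie in the second step, namely the fine-grained computation of $h^\phi(P_B)(t)$ when $g(B)>1$ and when several facets of $P_B$ are removed: one must show that the effect of passing from a unit parallelepiped to a parallelepiped of lattice volume $g(B)$ is exactly the introduction of the sum over $I\subseteq B$ with weights $b_\phi(I)$ and with the index of $A$ shifted by $|I|$ rather than by some other subset-dependent quantity. All other ingredients---Shephard's decomposition, additivity of translation-invariant valuations over relatively open pieces, and the half-open unit-cube formula of Theorem~\ref{thm:hvectorunitcube}---have already been developed earlier in the paper, so once this one identity is in hand the theorem follows by a double-count.
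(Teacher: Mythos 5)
Your outline is essentially the paper's proof, but you have mis-identified which ingredients are already available. Two remarks.

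First, the decomposition you attribute to Shephard, namely $Z = \biguplus_{B\in\mathcal B}\halflozenge(\IP(B))$ with one half-open piece per basis $B$ and facets removed according to $\IP(B)$, is exactly what the paper uses, but it is not Shephard's theorem. Theorem~\ref{thm:Shephard} only gives a subdivision into closed parallelepipeds with no half-open structure. The precise statement you need is obtained by combining Lemma~\ref{lem:decompstanley} (Stanley's partition $Z = \biguplus_{I\in\mathcal I}\Pi(I)$ up to translation) with Lemma~\ref{lem:Imax}, which groups the independent sets $I$ according to the lexicographically smallest basis $\lfloor I\rfloor$ containing them and shows that $\{I : \lfloor I\rfloor = B\} = \{I : \IP(B)\subseteq I\subseteq B\}$; the union of those $\Pi(I)$, suitably translated, is precisely $\halflozenge(\IP(B))$.

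Second, the identity you flag as the remaining obstacle,
\[
h^\phi\bigl(\halflozenge(\IP(B))\bigr)(t) \ = \ \sum_{I\subseteq B} b_\phi(I)\, A_{|I\cup \IP(B)|+1}(d+1,t),
\]
is not something you need to re-derive from the unit-cube result of Theorem~\ref{thm:hvectorunitcube}: it is exactly Theorem~\ref{thm:halfopenparallelepipeds}, proved earlier in Section~\ref{sec:halfopenparallelepiped}, applied with $r=d$, the generating vectors being the elements of $B$, and $I=\IP(B)$. (Your worry about the case $g(B)>1$ is already absorbed there via Lemmas~\ref{lem:Ehrhartfunction} and~\ref{lem:boxpartition}, which is also where the weights $b_\phi(I)$ arise; note that $b_\phi(I)$ corresponds to $g(I)$ in Stanley's formula only after a M\"obius inversion, not directly.) Once you cite Theorem~\ref{thm:halfopenparallelepipeds}, additivity of $\phi$ over the disjoint half-open pieces plus swapping the order of summation over $B$ and $I$ gives the statement, which is precisely the paper's argument.
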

Here, $\mathcal{I}$ and $\mathcal{B}$ denote the set of independent subsets of 
$V$ and the bases formed by elements in $V$, respectively.
The internally passive elements of a basis $B$ (see Section 
\ref{sec:matroidalasp} for a definition) are collected in 
$\IP (B)$, and $b_\phi (I)$ is the value of 
$\phi$ on the relative interior of the parallelepiped generated by the vectors 
in~$I$.


\section{Preliminaries}
\subsection{Polynomials}
A polynomial $h(t)=\sum _{i=0} ^{d} h_i t^i$ of degree $d$ is called 
\textbf{unimodal} if its 
coefficient vector ${\bf h }=(h_0,\ldots,h_d)$ is unimodal,
that is, if
\[
	h_0 \leq h_1 \leq \dots \leq h_k \geq \dots \geq h_d
\]
for some $k \in \{ 0, 1, \dots , d \}$;
we say that $h(t)$ and ${\bf h}$ have a 
\textbf{peak} at $k$. The polynomial $h(t)$ is called \textbf{alternatingly 
increasing} if
\[
h_0 \le h_d \le h_1 \le h_{d-1} \le \dots \le h_{\floor{\frac{d+1}{2}}} \, .
\]
In particular, if $(h_0, h_1, \dots, 
h_d)$ is alternatingly increasing, then $(h_0, h_1, \dots, h_d)$ is unimodal 
with peak at $\floor{\frac{d+1}{2}}$. We call a polynomial $h(t)$ 
\textbf{palindromic} with 
center of symmetry at $\frac{d}{2}$ if $t^dh(\frac{1}{t})=h(t)$. If it is in 
addition unimodal, then the  coefficients closest to the center of symmetry are 
maximal, i.e., $h(t)$ has a peak at $\frac{d}{2}$ if $d$ is even, and at 
$\lfloor \frac{d}{2}\rfloor$ and $\lfloor \frac{d}{2}\rfloor +1$ if $d$ is odd. 

Every polynomial $h(t)$ of degree $d$ can be uniquely decomposed into a sum 
$h(t)=a(t)+t \, b(t)$, where $a(t)$ and $b(t)$ are palindromic with 
$t^da(\frac{1}{t})=a(t)$ and $t^{d-1}b(\frac{1}{t})=b(t)$~\cite{stapledondelta}. 
\begin{lem}\label{lem:decomp_poly}
        Let $h(t)=a(t)+t \, b(t)$ be a polynomial of degree $d$, where 
        $a(t)$ and $b(t)$ are palindromic with center of symmetry $\frac{d}{2}$ 
        and $\frac{d-1}{2}$, respectively. Then $h(t)$ is alternatingly 
        increasing if and only if $a(t)$ and $b(t)$ are unimodal.
\end{lem}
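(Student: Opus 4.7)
The plan is to expand everything coefficient-wise and read off the alternating-increasing chain as two interleaved chains on the coefficients of $a$ and $b$. First I would write $a(t) = \sum_{i=0}^{d} a_i t^i$ and $b(t) = \sum_{i=0}^{d-1} b_i t^i$, so that $h_i = a_i + b_{i-1}$ with the convention $b_{-1} := 0$. The palindromic symmetries $a_{d-i} = a_i$ and $b_{d-1-i} = b_i$ then give the crucial companion identity $h_{d-i} = a_i + b_i$.

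Next I would pair up the inequalities in the defining chain $h_0 \le h_d \le h_1 \le h_{d-1} \le \cdots \le h_{\lfloor (d+1)/2 \rfloor}$. After the cancellations enabled by the two formulas above, every inequality of the form $h_i \le h_{d-i}$ reduces to $b_{i-1} \le b_i$, and every inequality of the form $h_{d-i} \le h_{i+1}$ reduces to $a_i \le a_{i+1}$. Thus the alternating-increasing condition on $h$ is equivalent to the pair of ascending chains $0 = b_{-1} \le b_0 \le b_1 \le \cdots$ and $a_0 \le a_1 \le \cdots$, each running up through the centre of symmetry of the respective polynomial.

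Combining these chains with the palindromic symmetries of $a$ and $b$ gives exactly unimodality of $a$ with peak at $d/2$ and of $b$ with peak at $(d-1)/2$, yielding both directions of the biconditional simultaneously. The only step requiring care is the bookkeeping at the end of the chain, where the parity of $d$ determines whether the final inequality is of $a$-type or $b$-type and how many inequalities of each type appear. I would handle this by treating $d=2m$ and $d=2m+1$ separately in a short indexing check, but this is pure accounting rather than a substantive obstacle; the conceptual content is entirely captured by the two-line translation of the chain into conditions on $a_i$ and $b_i$.
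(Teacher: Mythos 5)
Your proposal is correct and matches the paper's approach exactly: the paper's one-sentence proof asserts precisely the two coefficient equivalences you derive, namely $h_i \le h_{d-i} \Leftrightarrow b_{i-1}\le b_i$ and $h_{d-i}\le h_{i+1} \Leftrightarrow a_i\le a_{i+1}$, declaring them ``easy to check,'' whereas you actually carry out the check via $h_i=a_i+b_{i-1}$ and $h_{d-i}=a_i+b_i$ (the latter using both palindromy relations). One small point that your explicit convention $b_{-1}:=0$ exposes, and which the paper's proof glosses over in the same way: the $i=0$ inequality $h_0\le h_d$ reduces to $0\le b_0$, which is slightly stronger than unimodality of $b$ alone (a palindromic unimodal $b$ can have $b_0<0$), so strictly speaking the biconditional requires reading ``$b$ unimodal'' as ``$b$ unimodal with $b_0\ge 0$'' --- harmless in every application in the paper, where $b$ is always a nonnegative combination of the $A_j(d,t)$.
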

\begin{proof}
Let $h(t)=\sum_{i=0}^{d}h_it^i$. Since $a(t)$ and $b(t)$ are palindromic, it 
is easy to check that $h_i\leq 
h_{d-i}$ for all $i$ if and only if $b(t)$ is unimodal, and  $h_{d-i}\leq 
h_{i+1}$ for all $i$ if and only if $a(t)$ is unimodal.
\end{proof}


\subsection{Translation-invariant valuations}\label{sec:valuations}
A \textbf{lattice polytope} is a polytope in $\R^d$ with vertices in the integer lattice $\Z^d$. 
The family of all lattice polytopes in $\mathbb{R}^d$ will be denoted by 
$\Pol{\Z^d}$. A \textbf{valuation} on lattice polytopes is a map $\varphi$ from 
$\Pol{\Z^d}$ into some Abelian group $G$ such that $\phi (\emptyset)=0$ and
\[
\varphi (P\cup Q) \ = \ \varphi (P) + \varphi (Q) - \varphi (P\cap Q)
\]
whenever $P,Q,P\cup Q, P\cap Q \in \Pol{\Z^d}$. In 
\cite{mcmullen2009valuations} McMullen showed that every 
valuation satisfies the \textbf{inclusion-exclusion property}. Namely, for 
lattice 
polytopes $P_1,\ldots,P_m \in \Pol{\Z^d}$ such that $P_1\cup \cdots \cup P_n 
\in \Pol{\Z^d}$ and $\bigcap _{i\in I}P_i \in \Pol{\Z^d}$ for all $I\subseteq 
[m] := \{ 1, 2, \dots, m \}$,
\[
\varphi (P_1\cup \cdots \cup P_m) \ = \ \sum _{\emptyset \neq I} (-1)^ {|I|-1} \, \varphi \left(\bigcap _{i\in I}P_i \right).
\]
This allows for a definition of $\varphi$ on the relative interior $\relint P$ of a polytope as
\[
\varphi(\relint P) \ = \ \sum _{F} (-1)^{\dim P -\dim F}\varphi (F) \, ,
\]
where the sum is taken over all faces of $P$. We call $\varphi$  
\textbf{translation-invariant} or a \textbf{$\Z^d$-valuation} if 
$\varphi(P+x)=\varphi(P)$ for all $x\in \Z^d$ and all $P\in \Pol{\Z^d}$. 
Fundamental examples besides the volume are the Euler characteristic, the 
\textbf{discrete volume} 
$\ehrval(P):=|P\cap \Z^d|$ and 
the solid-angle sum (see, e.g., \cite{solidangle}). McMullen 
\cite{mcmullen1977valuations} proved that for integers $n\geq 0$ 
the value $\varphi (nP)$ of the $n$-th dilate of an $r$-dimensional lattice 
polytope $P$ is given by a polynomial $\ehr^\phi_P (n)$ of degree at most $r$ in 
$n$. For the discrete volume this was proved by Ehrhart 
\cite{ehrhartpolynomial}; when $\phi = \ehrval$, we suppress the superscript 
and call $\ehr_P (n)$ the {\bf Ehrhart
polynomial} of $P$. 
Equivalently, there are $h_0^\phi (P),\ldots, h_r^\phi (P) \in G$ such that  
\[
\ehr^\phi_P (n)\ = \ h_0^\phi (P) {n+r \choose r} + h_1^\phi (P) {n+r-1 \choose r} + \cdots + h_r^\phi (P){n \choose r}
\]
for all $n\geq 0$.
In terms of generating series, this is equivalent to
\[
\Ehr^\phi (P,t) \ := \ \sum _{n\geq 0} \ehr^\phi_P(n) \, t^n \ = \ \frac{h_0^\phi (P)+ \cdots + h_r^\phi (P) \, t^r}{(1-t)^{r+1}} \, .
\]
In the special case $\phi = \ehrval$, we call $\Ehr (P, t)$ the {\bf Ehrhart series} of~$P$.
The numerator polynomial $h^\phi (P)(t)$ is called the 
\textbf{$h^\ast$-polynomial of $P$ with 
respect to $\varphi$} and the vector $h^\phi (P) := (h_0^\phi (P), 
\ldots,h_r^\phi (P))$ is the \textbf{$h^\ast$-vector of $P$ with respect to $\phi$}.
When $\phi = \ehrval$, we call $h^\phi (P)$ simply the \textbf{$h^\ast$-vector}; alternative names in this case include 
\textbf{$\delta$-vector} and \textbf{Ehrhart $h$-vector}. 

For the discrete volume 
Stanley~\cite{stanleydecomp} showed that the entries of  $h^\phi (P)$ are 
nonnegative 
for all lattice polytopes $P$. For the solid-angle sum this was shown by Beck, 
Robins and Sam \cite{solidangle}. In \cite{jochemko2015combinatorial} 
Jochemko and Sanyal studied the class of all 
$\mathbb{Z}^d$-valuations into some partially 
ordered Abelian group such that $h^\phi (P)(t)$ has nonnegative entries 
for every lattice polytope $P$. They called these valuations 
\textbf{combinatorially positive} and 
obtained the following simple characterization.
\begin{thm}[{\cite{jochemko2015combinatorial}}]\label{thm:combpositive}
Let $\phi$ be a $\Z^d$-valuation. Then
$\phi$ is combinatorially positive if and only if
$\phi (\relint \Delta)\geq 0$ for all simplices $\Delta \in \Pol{\Z^d}$.
\end{thm}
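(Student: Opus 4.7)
The forward direction is a short generating-function calculation. For any $r$-dimensional lattice polytope $P$, the Ehrhart-type reciprocity theorem for translation-invariant valuations (McMullen) gives
\[
\sum_{n\geq 1}\phi(\relint(nP))\,t^n=\frac{t\bigl(h^\phi_r(P)+h^\phi_{r-1}(P)\,t+\cdots+h^\phi_0(P)\,t^r\bigr)}{(1-t)^{r+1}} \, ,
\]
so comparing coefficients of $t^1$ yields $\phi(\relint P)=h^\phi_r(P)$. In particular, if $\phi$ is combinatorially positive then for every lattice simplex $\Delta$ we have $\phi(\relint\Delta)=h^\phi_{\dim\Delta}(\Delta)\geq 0$.

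For the converse, I would argue via a half-open simplicial decomposition. Given a $d$-dimensional lattice polytope $P$, fix a lattice triangulation $\mathcal{T}$ of $P$ using only the vertices of $P$ (for instance a pulling triangulation) together with a generic reference point $q$; for each simplex $\Delta\in\mathcal{T}$ remove the facets visible from $q$ to obtain a half-open simplex $\Delta^\circ$, producing a set-theoretic partition $P=\bigsqcup_{\Delta\in\mathcal{T}}\Delta^\circ$. Extending $\phi$ to half-open polytopes via inclusion-exclusion and using additivity across such decompositions yields
\[
h^\phi(P)(t)=\sum_{\Delta\in\mathcal{T}}h^\phi(\Delta^\circ)(t) \, ,
\]
so it suffices to show that each $h^\phi(\Delta^\circ)(t)$ has nonnegative coefficients.

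The technical heart is this last step. For a half-open $d$-simplex $\Delta^\circ$ obtained from $\Delta=\conv(v_0,\ldots,v_d)$ by removing the facets opposite $\{v_i:i\in S\}$, the relative-interior stratification $\Delta^\circ=\bigsqcup_{T\supseteq S}\relint\Delta_T$, where $\Delta_T=\conv\{v_i:i\in T\}$, lifts to each dilate, so $\phi(n\Delta^\circ)=\sum_{T\supseteq S}\phi(\relint(n\Delta_T))$ for $n\geq 1$. Since each dilate $n\Delta_T$ is itself a lattice simplex, the hypothesis gives $\phi(\relint(n\Delta_T))\geq 0$ for every such $n$ and $T$. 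Converting to the $h^\phi$-basis via $(1-t)^{d+1}\Ehr^\phi(\Delta^\circ,t)$ and exploiting the polynomial cancellations afforded by the fact that each $\phi(\relint(n\Delta_T))$ is a polynomial in $n$ of degree $|T|-1<d+1$ expresses every coefficient of $h^\phi(\Delta^\circ)(t)$ as a nonnegative combination of these values. The principal obstacle is the combinatorial bookkeeping in this final reduction: verifying, after the cancellations, that no negative contributions survive. An equivalent and perhaps cleaner route is the fundamental-parallelepiped picture, in which $h^\phi(\Delta^\circ)(t)$ arises as a height-graded sum of $\phi$-weights over lattice points of the box in the homogenized cone over $\Delta$, with removed facets shifting heights but not introducing negative weights.
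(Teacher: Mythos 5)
This statement is quoted from \cite{jochemko2015combinatorial}; the paper does not reprove it, so I can only judge your argument on its own terms.

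Your forward direction is correct and clean: McMullen's reciprocity for $\Z^d$-valuations gives $\phi(\relint P)=h^\phi_{\dim P}(P)$, so combinatorial positivity immediately yields $\phi(\relint\Delta)\ge 0$ for every lattice simplex $\Delta$. Your backward direction also sets up the right reduction: a generic half-open triangulation of $P$ (Lemma~\ref{lem:ho} and Corollary~\ref{cor:ho-val}) shows $h^\phi(P)(t)=\sum_{\Delta\in\mathcal T}h^\phi(\Delta^\circ)(t)$, so it suffices to handle half-open lattice simplices.

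The gap is exactly where you say the ``technical heart'' lies, and it is not a bookkeeping issue that can be waved away. You claim that converting $\phi(n\Delta^\circ)=\sum_{T\supseteq S}\phi(\relint(n\Delta_T))$ to the $h^\ast$-basis ``expresses every coefficient of $h^\phi(\Delta^\circ)(t)$ as a nonnegative combination of these values.'' That is not what the naive change of basis gives. The inversion $h^\phi_i=\sum_{j\le i}(-1)^{i-j}\binom{d+1}{i-j}\ehr^\phi(j)$ (equivalently, multiplying by $(1-t)^{d+1}$) genuinely introduces signs, and they do not obviously cancel once you substitute the stratification. Already for a closed $3$-simplex one computes $h^\phi_2(\Delta)=\phi(\relint(2\Delta))-4\,\phi(\relint\Delta)$, which has a negative coefficient: the hypothesis $\phi(\relint\Delta_T)\ge 0$ does not directly give $h^\phi_2\ge 0$ from that expression. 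Of course some nonnegative representation exists (that is what the theorem asserts), but producing one is the entire content of the proof, and your sketch supplies no mechanism for it. Your alternative ``fundamental-parallelepiped'' route also has an unaddressed conceptual obstacle: the height-$k$ slice of the box over a lattice simplex is in general \emph{not} a lattice polytope, so $\phi$ cannot be evaluated on it as written, and ``$\phi$-weights over lattice points'' is a notion native to $\ehrval$ that does not transfer to an abstract translation-invariant valuation. Making either route rigorous requires a genuinely new idea (in \cite{jochemko2015combinatorial} this is done by a careful lattice dissection inside the homogenized cone), not just cancellation bookkeeping.
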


Note that this implies that $\phi (\relint P)\geq 0$ for all $P \in \Pol{\Z^d}$ if $\phi$ is combinatorially positive. 


\subsection{Half-open polytopes}
To every polytope $P\in \mathcal{P}(\mathbb{Z}^d)$ and every generic 
$q$ in the affine hull $\aff(P)$ of $P$ we can associate a half-open polytope 
$\half{q}{P}$, defined as the set of points $p\in P$ such that $[q,p)\cap 
P\neq \emptyset$. Thinking of $q$ as a light source, $\half{q}{P}$ is the set 
of all points in $P$ that are not visible from $q$. Note that $\half{q}{P}$ is 
closed if and only if $q\in P$ and in this case $\half{q}{P}=P$. If 
$F_1,\ldots, F_m$ are the facets of $P$, let $I_q(P)\subseteq [m]$ contain the indices 
of facets visible from $q$. Then
\[
\half{q}{P} \ = \ P \setminus \bigcup _{i\in I_q(P)} F_i \, .
\]
Accordingly, for a valuation $\varphi$ we define
\[
\phi \left(\half{q}{P}\right) \ := \ \phi (P) - \sum _{\emptyset\neq I\subseteq 
[m]} (-1)^{|I|-1}\phi 
\left(\bigcap _{i\in I}F_i\right).
\]
In particular, we can consider $\ehr^\phi_{\half{q}{P}}(n)$ and the accompanying
$h^\ast$-polynomial of $\half{q}{P}$. For example, it is easy to see that if 
$Q$ is a half-open unimodular simplex\footnote{
A {\bf simplex} is a $d$-polytope with (the minimal number of) $d+1$ vertices; it is {\bf unimodular} if these vertices have integer coordinates and the simplex has (minimal) volume $\frac{ 1 }{ d! }$. 
}
of dimension $d$ with 
$k$ missing (visible) facets, then $\ehr_{Q}(n)={n+d-k \choose d}$, or 
equivalently, $\Ehr(Q, t) = \frac{ t^k }{ (1-t)^{ d+1 } }$.

\begin{lem}[{\cite[Theorem~3]{KV}}]\label{lem:ho}
    Let $P$ be a polytope, $P = P_1 \cup \cdots \cup P_k$ a dissection
    and $q \in \aff(P)$ generic. Then
    \[
    \half{q}{P} \ = \ 
    \half{q}{P_1} \uplus 
    \cdots \uplus 
    \half{q}{P_k}
    \]
    is a disjoint union of half-open polytopes.
\end{lem}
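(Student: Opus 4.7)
The plan is to prove both the set equality $\half{q}{P} = \biguplus_{i=1}^{k} \half{q}{P_i}$ and the disjointness by tracking, for each point $p\in P$, the unique piece in which the segment from $q$ terminates. The approach rests on the two equivalent descriptions of the half-open operation given in the excerpt: the intrinsic one $\half{q}{X}=\{x\in X:[q,x)\cap X\neq\emptyset\}$ and the combinatorial one removing exactly the facets of $X$ visible from $q$.

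First I would make the notion of ``generic'' precise. Let $\mathcal{C}$ denote the polyhedral complex formed by the pieces $P_1,\ldots,P_k$ together with all their common faces. Choose $q\in\aff(P)$ so that $q$ avoids the finite union of affine spans of codimension $\geq 2$ faces of $\mathcal{C}$; this union is a measure-zero subset of $\aff(P)$, so a generic $q$ satisfies the requirement. The consequence I need is that, for every $p\in P$, the segment $[q,p]$ meets only facets of pieces $P_i$ (never faces of smaller dimension) except at its endpoint. Hence for every $p\in P$ there exists a well-defined index $k(p)$ such that a sufficiently short open sub-segment of $[q,p)$ ending at $p$ lies in the relative interior of the unique piece $P_{k(p)}$.

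Next I would verify the decomposition pointwise. Take $p\in\half{q}{P}$. By the visibility description, $p\in P$ and $p$ avoids every facet of $P$ visible from $q$; equivalently, the segment $[q,p)$ enters $P$ before reaching $p$. The genericity hypothesis then guarantees that this entrance lies inside $P_{k(p)}$ right before $p$, so $[q,p)\cap P_{k(p)}\neq\emptyset$ and $p\in\half{q}{P_{k(p)}}$. For any other index $i\neq k(p)$ with $p\in P_i$, the point $p$ lies on a common facet $F$ of $P_i$ and $P_{k(p)}$; since the segment approaches $p$ from inside $P_{k(p)}$, the source $q$ lies on the side of $\aff(F)$ opposite to the interior of $P_i$, meaning $F$ is a facet of $P_i$ visible from $q$, so $p\notin\half{q}{P_i}$. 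Conversely, if $p\in\half{q}{P_i}$, then $[q,p)\cap P_i\neq\emptyset$, and since $P_i\subseteq P$ this implies $[q,p)\cap P\neq\emptyset$, so $p\in\half{q}{P}$. The two inclusions together with the uniqueness of $k(p)$ deliver the claimed disjoint union.

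The main obstacle is the genericity argument: one must verify that avoiding the affine spans of codimension-$2$ faces of $\mathcal{C}$ is exactly the right transversality condition so that the ``entering piece'' $k(p)$ is unambiguously defined for every $p\in P$ simultaneously. Once this transversality is in hand, the rest of the proof is essentially a local visibility computation at each point $p$, and the two descriptions of $\half{q}{\cdot}$ make it routine to interconvert between the ``segment intersection'' viewpoint (convenient for showing $p\in\half{q}{P_{k(p)}}$) and the ``visible facet'' viewpoint (convenient for excluding $p$ from $\half{q}{P_i}$ with $i\neq k(p)$).
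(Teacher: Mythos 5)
The paper does not prove this lemma---it is quoted from K\"oppe and Verdoolaege---so there is no in-paper proof to compare against, and your argument stands or falls on its own. Your overall plan, classifying each $p$ by the piece through whose relative interior $[q,p)$ approaches $p$, is the right one; but the genericity condition you impose is incorrect, and this is a genuine gap. Requiring $q$ to avoid only the affine spans of codimension-$\geq 2$ faces of $\mathcal{C}$ does not exclude $q$ from lying on the affine hull of a facet, which is exactly the degeneracy that must be ruled out. In one dimension take $P=[0,1]$, $P_1=[0,\tfrac12]$, $P_2=[\tfrac12,1]$, and $q=\tfrac12$: the complex has no codimension-$\geq 2$ faces, so your condition holds vacuously, yet $\half{q}{P_1}=P_1$ and $\half{q}{P_2}=P_2$ overlap at $\tfrac12$ and the conclusion fails. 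The correct condition is that $q$ avoid the affine hull of every facet of every $P_i$; that is a finite union of hyperplanes in $\aff(P)$, still measure zero, so a generic choice exists.

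Two further points in the same vein. The consequence you state---that for \emph{every} $p$ the segment $[q,p]$ meets only facets, never lower-dimensional faces, away from its endpoint---is false even under the correct genericity hypothesis: in a planar fan of triangles sharing a common apex, the segment from a generic $q$ to a point on the far side will pass through the apex. What is both true and sufficient is local at $p$: for $p\in\half{q}{P}$, some terminal sub-segment of $[q,p)$ lies in $\relint P_{k(p)}$ for a unique $k(p)$, and uniqueness holds because two such pieces would force the segment near $p$ into their common face, hence $q\in\aff(F)$ for some facet $F$. Finally, your disjointness argument assumes $p$ lies on a common \emph{facet} of $P_i$ and $P_{k(p)}$, which again can fail (the common face may be of lower dimension, as in the fan example). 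Replace it with convexity of $P_i$: the set $\{t\in[0,1]:(1-t)q+tp\in P_i\}$ is a closed interval containing $1$ but missing a left neighborhood of $1$ (which lands in $\relint P_{k(p)}$, disjoint from $P_i$), so it equals $\{1\}$ and $[q,p)\cap P_i=\emptyset$. With these repairs the argument goes through.
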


\begin{cor}[{\cite[Corollary 3.2]{jochemko2015combinatorial}}]\label{cor:ho-val}
    Let $P = P_1 \cup \cdots \cup P_k$ be a dissection with $P_1,\dots,P_k \in
    \mathcal{P}(\Z^d)$. If $\phi$ is a valuation, then for a generic $q \in
    \relint(P)$
    \[
    \phi(P) \ = \ 
    \phi(\half{q}{P_1}) + 
    \cdots + 
    \phi(\half{q}{P_k}) \, .
    \]
\end{cor}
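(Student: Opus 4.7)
The plan is to convert the geometric disjoint union supplied by Lemma~\ref{lem:ho} into an identity of indicator functions and then evaluate $\phi$ on it, viewing $\phi$ as a linear functional on the polytope algebra.

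First, since $q\in\relint(P)$, no facet of $P$ is visible from $q$; hence $I_q(P)=\emptyset$ and $\half{q}{P}=P$. Combined with Lemma~\ref{lem:ho}, this yields the pointwise identity of indicator functions
\[
\mathds{1}_P \;=\; \mathds{1}_{\half{q}{P_1}}+\cdots+\mathds{1}_{\half{q}{P_k}}.
\]
Each term on the right expands via inclusion--exclusion on the visible facets: writing $F_{i,1},\ldots,F_{i,m_i}$ for the facets of $P_i$,
\[
\mathds{1}_{\half{q}{P_i}} \;=\; \mathds{1}_{P_i}-\sum_{\emptyset\neq I\subseteq I_q(P_i)}(-1)^{|I|-1}\mathds{1}_{\bigcap_{j\in I}F_{i,j}},
\]
and every intersection that occurs is a face of $P_i$, hence a lattice polytope.

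I would then apply $\phi$ to the combined identity. By McMullen's inclusion--exclusion property recalled in Section~\ref{sec:valuations}, a $\Z^d$-valuation extends to a $\Z$-linear functional on the Abelian group generated by indicator functions of lattice polytopes. Applying this linear functional to both sides of the identity above and grouping the terms corresponding to each $P_i$ together with its visible facet intersections, one recovers on the right exactly $\sum_{i=1}^k \phi(\half{q}{P_i})$ by the paper's definition of $\phi$ on half-open polytopes, while the left-hand side is $\phi(P)$.

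The main subtlety is the legitimacy of this last step: one needs that a linear relation among indicator functions of lattice polytopes is automatically transported to the corresponding relation among their $\phi$-values. This is exactly McMullen's theorem, applied to the cover $P=P_1\cup\cdots\cup P_k$ together with all of its facet intersections. The only thing to check by hand is that all sets appearing as intersections of facets across the dissection indeed lie in $\Pol{\Z^d}$, which holds since each $P_i\in\Pol{\Z^d}$ and faces of lattice polytopes are lattice polytopes; once this is confirmed, the identity follows without any further combinatorial bookkeeping.
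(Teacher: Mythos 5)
Your argument is correct and is essentially the proof behind the citation: the paper itself does not reprove this corollary but refers to \cite[Corollary 3.2]{jochemko2015combinatorial}, where the identity is obtained exactly as you do, by combining Lemma~\ref{lem:ho} with $\half{q}{P}=P$ for $q\in\relint(P)$ and then transporting the resulting indicator-function identity to $\phi$-values via McMullen's inclusion--exclusion/extension theorem, which is also what underlies the paper's definition of $\phi$ on half-open polytopes. No gaps; your sum over $\emptyset\neq I\subseteq I_q(P_i)$ is the intended reading of that definition.
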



\section{Descent Statistics}
\subsection{Type-$A$}\label{sec:typeadescent}
Let $S_d$ denote the set of all permutations on $[d]$. For a
permutation word $\sigma=\sigma_1\sigma_2\cdots\sigma_d$ in $S_d$ the 
\textbf{descent set} is defined by 
\[
	\Des (\sigma) \ := \ \{i\in [d-1]  : \,  \sigma_i > \sigma_{i+1}\} \, .
\]
The number of descents of $\sigma$ is denoted by $\des(\sigma):=\left| \Des (\sigma)\right|$. The \textbf{(type-$A$) Eulerian number} $a(d,k)$ counts the number of
permutations in $S_d$ with exactly $k$ descents: 
\[
	a(d,k) \ := \ \left| \lbrace \sigma \in S_d  : \,  \des (\sigma) = k\rbrace\right| .
\] 
We consider a refinement of the descent statistic: the \textbf{$(A,j)$-Eulerian number} 
\[
	a_j(d,k) \ := \ \left| \left\{ \sigma \in S_d  : \,  \sigma_d = d+1-j \text{ and } \des(\sigma) = k \right\} \right|
\]
giving the number of permutations $\sigma \in S_d$ with last letter $d+1-j$ and exactly $k$ descents. The corresponding \textbf{$(A,j)$-Eulerian polynomial} is
\[
	A_j (d,t) \ := \ \sum _{k=0} ^{d-1} a_j (d,k) \, t^k \, .
\]
Note that by definition $a_j (d,k)=0$ for $k<0$, $j<1$, $k>d-1$ and $j>d$. As far as we know, the 
$(A,j)$-Eulerian polynomials were first considered by Brenti and 
Welker~\cite{brentiwelker}, though the $(A,j)$-Eulerian \emph{numbers} and generalizations of 
them were considered earlier (see, e.g., 
\cite{ehrenborg1998mixed,stanley1981two}).


\subsection{Type-$B$}
A {\bf signed permutation} on $[d]$ is a pair $(\sigep)$ with $\sigma \in S_d$ and $\epsilon \in \{ \pm1 \}^d$.
To each letter $\sigma_{i}$ in the permutation word $\sigma$ we assign the sign 
$\epsilon_{i}$, the $i$-th entry of $\epsilon$.
For a given $d$, the set of signed permutations  is denoted by $B_d$ and has $2^d \, d!$ elements.
We will use one-line notation to denote signed permutation words with the convention that letters associated with a negative sign will be followed by an accent mark. So for $d=5$, $\sigma = 42135$ and $\epsilon = (-1, -1, 1, -1, 1)$ we write $(\sigep) = 4' 2' 1 3' 5$.

Set $\sigma_0:=0$ and $\epsilon_0:=1$ for all $(\sigep) \in B_d$ and all $d \ge 1$. Then $i \in [d-1] \cup \{0\}$ is a {\bf descent} of $(\sigep) \in B_d$ if $\epsilon_{i} \sigma_i > \epsilon_{i+1} \sigma_{i+1}$.
E.g., 0 and 3 are the descents of $4' 2' 1 3' 5$. We define the {\bf descent set} and the {\bf descent number} of $(\sigep) \in B_d$, respectively, as
\begin{align*}
	\Des(\sigep) \ &:= \ \left\{ i \in [d-1] \cup \{ 0 \} : \, \epsilon_{i} \sigma_i > \epsilon_{i+1} \sigma_{i+1} \right\} \ \text{ and } \\
	\des(\sigep) \ &:= \ \left| \Des(\sigep) \right| .
\end{align*}
For a general background on type-B descents, see, e.g., \cite{brenti_qEulerian}.
We observe that the descent statistic on permutations in $S_d$ agrees with the 
descent statistic on signed permutations $B_d$ when we fix the sign vector 
$\epsilon = {\bf 1} := (1, 1, \dots, 1)$. However, since $0$ is a possible 
descent of a signed permutation, $0 \leq \des(\sigep) \leq d$ for all 
$(\sigep) \in B_d$, in contrast to $0 \leq \des(\sigma, {\bf1}) \leq d-1$ for 
all $\sigma \in S_d$.

The number of signed permutations on $[d]$ with exactly $k$ descents is the 
{\bf type-$B$ Eulerian number}. 
We write
\[
	b(d,k) \ := \ \left| \left\{ (\sigep) \in B_d : \, \des(\sigep) = k \right\} \right| .
\]
The {\bf type-$B$ Eulerian polynomial} is
\[
	B(d,t) \ := \ \sum_{k=0}^d b(d,k) \, t^k .
\]
Analgous to the type-$A$ case, we introduce the {\bf $(B,l)$-Eulerian numbers}, a refinement of the type-$B$ Eulerian numbers, defined by
\[
b_{l}(d,k) \ := \ \left| \left\{ (\sigep) \in B_{d}: \, \epsilon_{d} \sigma_{d} = d+1-l \text{ and } \des(\sigep)=k \right\} \right| ,
\]
where $1 \le l \le d$, and define the {\bf $(B,l)$-Eulerian polynomial}
\[
B_{l} (d,t) \ := \ \sum_{k=0}^d b_{l} (d,k) \, t^k .
\]
As far as we know, these have not been studied before.

\subsection{Unimodality and real-rootedness}
A fundamental result of Savage and Visontai~\cite{SavageVisontai} implies that 
the $(A,j)$-Eulerian 
polynomials have only real roots and are therefore unimodal. In fact, 
they proved the following stronger result.
\begin{thm}[{\cite{SavageVisontai}}]\label{thm:jEulerianrealrooted}
Let $c_1,\ldots,c_d \geq 0$ be real numbers. Then the polynomial
\[
c_1A_1(d,t) + c_2A_2(d,t) + \cdots + c_d A_d (d,t)
\]
has only real roots. In particular, its coefficients form a unimodal sequence.
\end{thm}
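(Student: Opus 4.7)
The plan is to show that the sequence $(A_1(d,t), \ldots, A_d(d,t))$ is \emph{compatible}, i.e.,\ that every nonnegative linear combination has only real roots; this is exactly the claim of the theorem, and unimodality then follows from Newton's inequalities applied to real-rooted polynomials with nonnegative coefficients. The standard route to compatibility is to establish the stronger property that the sequence is \emph{interlacing} (pairwise interlacing, or equivalently sharing a common interlacer) and to invoke the classical fact that an interlacing sequence of polynomials with nonnegative coefficients is compatible.

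First I would establish the recurrence
\[
A_j(d,t) \ = \ \sum_{i=j}^{d-1} A_i(d-1,t) \ + \ t \sum_{i=1}^{j-1} A_i(d-1,t) \qquad (1 \leq j \leq d) \, .
\]
The bijection is the standard one: a permutation $\sigma \in S_d$ with $\sigma_d = d+1-j$ corresponds to the permutation $\tau \in S_{d-1}$ obtained by removing the last entry of $\sigma$ and standardizing. If $\tau_{d-1} = d-i$, then lifting $\tau$ back replaces every letter $\geq d+1-j$ by its successor, which causes $\sigma_{d-1}$ to be $d+1-i$ when $i < j$ and $d-i$ when $i \geq j$; a direct comparison with $\sigma_d = d+1-j$ shows that exactly one new descent is created at position $d-1$ in the first case and none in the second. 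Summing over $\tau$ according to the value of $i$ yields the recurrence.

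Next I would prove by induction on $d$ that $(A_1(d,t), \ldots, A_d(d,t))$ is interlacing. The base case $d = 2$ is immediate. For the inductive step, the recurrence presents the new sequence as the image of the interlacing sequence $(A_1(d-1,t), \ldots, A_{d-1}(d-1,t))$ under a triangular transformation with entries $1$ on and above the diagonal and $t$ strictly below. Consequently, the induction reduces to the following closure lemma: if $(f_1, \ldots, f_n)$ is an interlacing sequence of polynomials with nonnegative coefficients, then so is
\[
g_j(t) \ := \ \sum_{i=j}^{n} f_i(t) \ + \ t \sum_{i=1}^{j-1} f_i(t) \qquad (j = 1, \ldots, n+1) \, .
\]
I would prove the lemma by observing the telescoping identity $g_{j+1} - g_j = (t-1) f_j$ and decomposing the construction into elementary moves (cumulative sums and multiplication by $t$) that are known in the Wagner--Br\"and\'en--Fisk toolkit to preserve interlacing.

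The hard part will be this closure lemma, where the specific shape of the recurrence meets the general theory of real stability: one must verify that the triangular pattern $1$/$t$ truly factors through interlacing-preserving primitives, or else find a common interlacer for $(g_j)$ directly in terms of one for $(f_i)$. The remaining ingredients are routine: the derivation of the recurrence is a short combinatorial computation, the base case is trivial, and the passage from interlacing to compatibility to real-rootedness to unimodality is purely formal. Once the closure lemma is in place, the induction delivers interlacing of $(A_j(d,t))_{j=1}^d$ for every $d \geq 1$, and the theorem follows at once.
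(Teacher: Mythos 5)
Your plan is essentially the Savage--Visontai proof that the paper cites: the recurrence you derive is (up to an index shift $d \mapsto d-1$) exactly the Brenti--Welker recurrence recorded as Lemma~\ref{lem:jEulerian_recursion}, and the induction via mutually interlacing sequences is precisely their strategy. The one caveat is that the closure lemma you flag as ``the hard part'' is indeed the technical heart of Savage and Visontai's argument (their Theorem~2.3), and it is not obviously a composition of off-the-shelf interlacing-preserving primitives such as cumulative sums and multiplication by $t$ --- they prove it directly from the definition of an interlacing family --- so a complete write-up would need a self-contained proof of that step rather than a gesture at the general toolkit.
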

Their inductive proof was based on the following recurrence for 
$(A,j)$-Eulerian polynomials, which seems to go back to Brenti and 
Welker~\cite{brentiwelker}. 
\begin{lem}[{\cite[Lemma 2.5]{brentiwelker}}]
    \label{lem:jEulerian_recursion}
    For $1\leq j \leq d+1$,
    \begin{align*}
    A_j (d+1, t) \ &= \ t \sum _{l=1} ^{j-1} A_l (d, t) + \sum _{l=j} ^d A_l (d,t) 
    \, .
    \end{align*}
\end{lem}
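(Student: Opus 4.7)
The plan is to give a short bijective proof. For fixed $j$, the polynomial $A_j(d+1,t)$ is a generating function over permutations $\sigma \in S_{d+1}$ with $\sigma_{d+1} = d+2-j$. The natural operation is to delete this final letter and standardize the resulting word on $[d+1]\setminus\{d+2-j\}$ by subtracting $1$ from every entry that exceeds $d+2-j$, producing a permutation $\tau \in S_d$. This gives a bijection between $\{\sigma \in S_{d+1} \, : \, \sigma_{d+1}=d+2-j\}$ and $S_d$.

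The key step is then to track how the descent statistic transforms under this bijection and to partition the image by the value of the last letter of $\tau$. Since standardization preserves the relative order of the remaining entries, the descents of $\sigma$ at positions $1,\ldots,d-1$ are in exact correspondence with the descents of $\tau$. Hence $\des(\sigma)-\des(\tau)$ is either $0$ or $1$, depending solely on whether there is a descent of $\sigma$ at position $d$, i.e., on whether $\sigma_d > \sigma_{d+1} = d+2-j$.

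Writing $\tau_d = d+1-l$, I would split into the two cases $\sigma_d < d+2-j$ and $\sigma_d > d+2-j$. A short check shows that the former corresponds to $l \in \{j, j+1, \ldots, d\}$ and contributes $\des(\sigma)=\des(\tau)$, while the latter corresponds to $l \in \{1, 2, \ldots, j-1\}$ and contributes $\des(\sigma)=\des(\tau)+1$. Summing $t^{\des(\sigma)}$ over all $\sigma$ with $\sigma_{d+1}=d+2-j$ via this bijection yields
\[
A_j(d+1,t) \ = \ \sum_{l=j}^{d} A_l(d,t) \; + \; t \sum_{l=1}^{j-1} A_l(d,t),
\]
which is the stated recurrence. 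There is no substantive obstacle; the only care required is the bookkeeping that identifies the correct range of $l$ with the sign of $\tau_d-(d+2-j)$, and thereby with the presence or absence of a descent at the terminal position of $\sigma$. The boundary cases $j=1$ and $j=d+1$ are handled automatically, since one of the two sums is then empty.
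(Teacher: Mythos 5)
Your proof is correct. The paper does not actually prove this lemma; it simply cites it as \cite[Lemma~2.5]{brentiwelker}, so there is no internal proof to compare against. Your bijection --- delete the fixed last letter $\sigma_{d+1}=d+2-j$ and standardize --- is the standard argument, and the bookkeeping is accurate: standardization is order-preserving on $\sigma_1,\ldots,\sigma_d$, so descents at positions $1,\ldots,d-1$ transfer exactly, and the sole possible discrepancy is the descent at position $d$, which occurs precisely when $\sigma_d>d+2-j$, i.e., when $\tau_d=d+1-l$ with $l\leq j-1$. The case split and the resulting identity $A_j(d+1,t)=\sum_{l=j}^d A_l(d,t)+t\sum_{l=1}^{j-1}A_l(d,t)$ follow cleanly, and you correctly observe that $j=1$ and $j=d+1$ are handled by one sum being empty. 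This is a sound, self-contained replacement for the citation.
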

Note that in general, $A_j(d,t)$ is not palindromic. Nevertheless, using the 
recurrence above together with the following lemma one can
determine the exact position of their peaks.
\begin{lem}[{\cite[Lemma 2.5]{brentiwelker}}]
    \label{lem:jEulerian_symmetry}
    For all $d \ge 1$ and $1 \le j \le d$, 
    \begin{align*}		
    A_j(d,t) \ &= \ t^{d-1} A_{d+1-j} \left(d, \tfrac{1}{t} \right) .
    \end{align*}
\end{lem}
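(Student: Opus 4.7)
My plan is to prove the claim via a complementation involution on $S_d$. First, I would define the map $c : S_d \to S_d$ by $c(\sigma)_i := d+1-\sigma_i$ for $i \in [d]$, and check that $c$ is an involution preserving $S_d$. Next, I would observe that under $c$, a permutation with last letter $\sigma_d = d+1-j$ is sent to one with last letter $c(\sigma)_d = d+1-(d+1-j) = j$, which is exactly the ``last letter'' condition appearing in the definition of $a_{d+1-j}(d, \cdot)$, since $j = d+1-(d+1-j)$.

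The heart of the argument is to track the effect of $c$ on descents. For any $i \in [d-1]$, the equivalence $\sigma_i > \sigma_{i+1} \iff d+1-\sigma_i < d+1-\sigma_{i+1}$ shows that $i \in \Des(\sigma)$ if and only if $i \notin \Des(c(\sigma))$. Thus complementation exchanges descents and ascents on positions $\{1,\dots,d-1\}$, and we obtain $\des(c(\sigma)) = (d-1) - \des(\sigma)$. Consequently, $c$ restricts to a bijection between permutations in $S_d$ with last letter $d+1-j$ and exactly $k$ descents, and permutations with last letter $j$ and exactly $d-1-k$ descents. This yields the coefficient identity $a_j(d,k) = a_{d+1-j}(d, d-1-k)$.

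Finally, I would translate this into the claimed polynomial identity via a change of summation index:
\[
t^{d-1} A_{d+1-j}(d, 1/t) \ = \ \sum_{k=0}^{d-1} a_{d+1-j}(d,k) \, t^{d-1-k} \ = \ \sum_{k'=0}^{d-1} a_{d+1-j}(d, d-1-k') \, t^{k'} \ = \ A_j(d,t) .
\]
The argument is entirely elementary, and I do not anticipate a serious obstacle; the only care required is the index bookkeeping in the last step and verifying that the ``last letter'' condition transforms correctly under $c$.
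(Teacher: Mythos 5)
Your proof is correct. Note that the paper itself does not prove Lemma~\ref{lem:jEulerian_symmetry} but simply cites it from Brenti and Welker; your complementation argument (conjugation by the longest element $w_0$, i.e.\ $\sigma_i\mapsto d+1-\sigma_i$) is the standard and cleanest way to establish it, and every step checks out: $c$ is an involution on $S_d$, it sends last letter $d+1-j$ to last letter $j$ (which is precisely the condition defining $a_{d+1-j}(d,\cdot)$), it complements the descent set within $[d-1]$ so that $\des(c(\sigma))=(d-1)-\des(\sigma)$, and the resulting coefficient identity $a_j(d,k)=a_{d+1-j}(d,d-1-k)$ translates directly into the polynomial identity after the index shift $k\mapsto d-1-k$. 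No gaps.
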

The following theorem is a slight strengthening of \cite[Corollary 
4.4]{kubitzke2009lefschetz} by Kubitzke and Nevo. While they used quite heavy 
algebraic machinery, we give an elementary combinatorial proof.
\begin{thm}
\label{thm:jEulerian_unimodality}
For all $1\leq j\leq d$, the coefficients of $A_j(d,t)$ are unimodal. More specifically, if $d$ is even,
\[
\begin{array}{cccccccccc}
a_j(d,0)	& \leq 	& \cdots 	& \leq 	& a_j(d,\frac{d}{2}-1)	& \geq	& \cdots 	&\geq 	& a_j (d,d-1)	& \text{ if } \ 1\leq j \leq \frac{d}{2} \, ,\\
a_j(d,0)	& \leq 	& \cdots 	& \leq 	& a_j(d,\frac{d}{2})		& \geq 	& \cdots 	&\geq 	& a_j (d,d-1)	& \text{ if } \ \frac{d}{2}< j \leq d \, ,
\end{array}
\]
and if $d \ge 3$ is odd,
\[
\begin{array}{ccccccccccc}
a_1(d,0)	& \leq 	& \cdots 	& \leq 	& a_1(d,\lfloor \frac{d}{2}\rfloor-1)	
& =		& a_1(d,\lfloor \frac{d}{2}\rfloor)	& \geq 	& \cdots 	&\geq 	& a_1 
(d,d-1)\\
a_d(d,0)	&\leq 	& \cdots 	& \leq 	& a_d(d,\lfloor \frac{d}{2}\rfloor)	& =		& a_d(d,\lfloor \frac{d}{2}\rfloor +1)	& \geq 	& \cdots 	&\geq
& a_d (d,d-1) \, ,
\end{array}
\]
\[
\begin{array}{cccccccccc}
a_j(d,0)	& \leq 	& \cdots 	& \leq 	& a_j(d,\lfloor \frac{d}{2}\rfloor)		& \geq 	& \cdots 	& \geq 	& a_j (d,d-1)	& \text{ if } \ 2\leq j\leq d-1 \, .
\end{array}
\]
\end{thm}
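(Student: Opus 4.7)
The plan is to proceed by induction on $d$, combining three ingredients: the real-rootedness of each $A_j(d,t)$ (which follows from Theorem~\ref{thm:jEulerianrealrooted} by taking $c_j=1$ and $c_l=0$ for $l\ne j$), the Brenti--Welker recurrence of Lemma~\ref{lem:jEulerian_recursion}, and the palindromic symmetry $A_j(d,t)=t^{d-1}A_{d+1-j}(d,1/t)$ of Lemma~\ref{lem:jEulerian_symmetry}. Real-rootedness together with nonnegativity of the coefficients already yields log-concavity and hence unimodality for free, so the only new content is the precise location of the peak.

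First I would dispose of the boundary indices $j=1$ and $j=d$ directly. Deletion of the maximal last letter sets up a descent-preserving bijection between permutations $\sigma\in S_d$ with $\sigma_d=d$ and arbitrary permutations of $[d-1]$, so $A_1(d,t)=A(d-1,t)$ is the classical Eulerian polynomial of $S_{d-1}$: being palindromic of degree $d-2$ and unimodal, it peaks exactly at its centre of symmetry, which matches the asserted positions (including the two-element plateau when $d$ is odd). Dually, any $\sigma\in S_d$ with $\sigma_d=1$ has a forced descent at position $d-1$, giving $A_d(d,t)=t\,A(d-1,t)$, whose peak is the same shifted by one and again matches the claim.

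For interior values of $j$ the inductive step invokes the recurrence together with the symmetry. Lemma~\ref{lem:jEulerian_symmetry} halves the work, because the peak of $A_j(d+1,t)$ determines that of $A_{d+2-j}(d+1,t)$; when $d+1$ is odd and $j=(d+2)/2$ the polynomial $A_j(d+1,t)$ is itself palindromic, and unimodality automatically places its peak at the centre. For the remaining indices the predicted peak $p$ lies within one unit of $(d-1)/2$, and it suffices to verify the two local-max inequalities $a_j(d+1,p-1)\le a_j(d+1,p)\ge a_j(d+1,p+1)$. Expanding each inequality via the recurrence yields two blocks of consecutive-coefficient differences $a_l(d,k)-a_l(d,k')$, one indexed by $l\ge j$ and one by $l<j$. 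Applying the coefficient-level symmetry $a_l(d,k)=a_{d+1-l}(d,d-1-k)$ to reindex the first block by $l'=d+1-l$ then triggers a clean collapse: a cross-cancellation between the reindexed head $l'\in\{1,\ldots,j-1\}$ and the original $l<j$ block, and/or an internal pair-cancellation in the symmetric central range $l'\in[j,d+1-j]$ under the involution $l'\leftrightarrow d+1-l'$ (with the exact mechanism depending on the parity of $d$), leaving behind a single sum of IH-differences in which every summand is nonnegative by the inductive hypothesis.

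The main obstacle, I expect, is the parity bookkeeping. The inductive hypothesis prescribes slightly different peak sets depending on the parity of $d$ and on whether $l$ is an interior or a boundary index; when $d$ is odd, the boundary indices $l=1$ and $l=d$ come with two-element plateau peaks, so certain IH-differences vanish rather than being strictly positive, and these zero contributions must be tracked through the symmetry swap to ensure that both cancellation mechanisms still deliver the desired nonnegative residual. Once this case-split is tabulated carefully, each sub-case reduces to a termwise application of the inductive hypothesis, completing the induction.
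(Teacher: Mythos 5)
Your proof strategy is sound and shares the paper's core ingredients---induction on $d$, the Brenti--Welker recurrence (Lemma~\ref{lem:jEulerian_recursion}), and the symmetry of Lemma~\ref{lem:jEulerian_symmetry}---but it organizes the argument differently. The paper never invokes real-rootedness: it uses the recurrence to split $A_j(d+1,t)$ into three polynomial blocks, observes (via the symmetry) that two of the blocks pair up into a palindromic polynomial whose coefficients are unimodal with a centered peak plateau, and that the remaining block is unimodal with a compatible peak; unimodality of the sum, \emph{and} the location of its peak, then fall out at once from a termwise comparison. You instead outsource unimodality to Theorem~\ref{thm:jEulerianrealrooted} (Savage--Visontai) and reduce the claim to two local-max inequalities at the predicted position $p$, expanding via the recurrence and using the coefficient-level symmetry $a_l(d,k)=a_{d+1-l}(d,d-1-k)$ to cancel the block of potentially negative IH-differences. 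I checked the mechanism in several parity cases and it does collapse cleanly to a termwise-nonnegative residual, so your approach works; but note that in a given case only one of the two inequalities needs cancellation (the other holds termwise), and the cancellation sometimes leaves a residual plus an untouched nonnegative block rather than a literal single sum, so the bookkeeping is a bit messier than your sketch suggests. Your direct treatment of the boundary cases $j=1$ and $j=d$ via $A_1(d,t)=A(d-1,t)$ and $A_d(d,t)=tA(d-1,t)$ is a clean shortcut not used in the paper, which folds them into the same induction. The trade-off is self-containedness: the paper advertises this as an elementary combinatorial alternative to Kubitzke--Nevo, and it gets unimodality for free from its own decomposition, whereas your route leans on the Savage--Visontai real-rootedness result, which was already stated in the paper but is a heavier external input than strictly necessary here.
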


\begin{proof}
We argue by induction on $d$.
When $d=1$, the claim is trivially true since $A_1(1,t)=1$. The case $d=2$ is 
easily checked. 

Let $d+1$ be even. We distinguish two cases:

\textsc{Case:} $1\leq j \leq \frac{d+1}{2}$.
Then
\[
	A_j (d+1, t) \ = \ t \sum _{l=1} ^{j-1} A_l (d, t) + \sum _{l=j} ^{d+1-j} A_l (d, t) + \sum _{l=d+2-j} ^d A_l (d, t)
\]
by Lemma~\ref{lem:jEulerian_recursion}. The first and the third summand added give, by Lemma~\ref{lem:jEulerian_symmetry}, a palindromic polynomial with center of symmetry at $\frac{d}{2}$ which, by induction, has unimodal coefficients with peaks at 
$\floor{\frac{d}{2}}$ and $\floor{\frac{d}{2}}+1$.
The second summand has, by induction, unimodal coefficients with peak at 
$\floor{\frac{d}{2}}=\frac{d+1}{2}-1$.

\textsc{Case:} $\frac{d+1}{2} <  j \leq d+1$.
Then
\[
	A_j (d+1, t) \ = \ t \sum _{l=1} ^{d+1-j} A_l (d, t) + t\sum _{l=d+2-j} ^{j-1}A_l (d, t) + \sum _{l=j} ^d A_l (d, t) \, .
\]
The first and the third summand added give a palindromic polynomial with center 
of symmetry at $\frac{d}{2}$, which has unimodal coefficients with peaks at 
$\floor{\frac{d}{2}}$ and $\floor{\frac{d}{2}}+1$. The 
coefficients of the second summand form a unimodal sequence with peak at 
$\floor{\frac{d}{2}}+1=\frac{d+1}{2}$.

If $d+1 \ge 3$ is odd, we distinguish again two cases.

\textsc{Case:} $1 \le j \le \frac{d+1}{2}$.
By Lemma~\ref{lem:jEulerian_recursion},
\[
	A_j (d+1, t) \ = \ t \sum _{l=1}^{j-1} A_l(d, t) + \sum_{l=j}^{d+1-j} A_l (d, t) + \sum_{l=d+2-j}^d A_l(d, t) \, .
\]
The second summand is, by induction and Lemma~\ref{lem:jEulerian_symmetry}, a palindromic polynomial with unimodal coefficients and peaks at $\frac{d}{2}-1$ and $\frac{d}{2}$. The coefficients of the first and third summand are unimodal with peak at 
$\frac{d}{2} = \floor{\frac{d+1}{2}}$.

\textsc{Case:} $\frac{d+1}{2} < j \le d+1$.
Then
\[
	A_j (d+1, t) \ = \ t \sum _{l=1}^{d+1-j} A_l (d, t) + t\sum_{l=d+2-j}^{j-1} A_l(d, t) + \sum _{l=j}^d A_l(d, t) \, .
\]
As in the previous case, the coefficients of the summand in the middle are unimodal and palindromic, this time with peaks at $\frac{d}{2}$ and $\frac{d}{2}+1$. The coefficients of the first and third summand form again a unimodal sequence with peak at $\frac{d}{2}=\floor{\frac{d+1}{2}}$.
\end{proof}
From the proof of \cite[Proposition~2.17]{svl}, it can moreover be seen that 
the coefficients of $A_j(d,t)$ are alternatingly increasing for 
sufficiently large $j$. We formally record this result and give a short proof.
\begin{lem}
\label{lem:jEulerian_alt_incr}
For all $d \ge 0$ and $\frac{d+1}{2} < j \le d+1$, the coefficients of 
$A_{j}(d+1,t)$ are alternatingly increasing.
\end{lem}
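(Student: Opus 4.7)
The plan is to apply Lemma \ref{lem:decomp_poly} to a decomposition of $A_j(d+1,t)$ that is already implicit in the proof of Theorem \ref{thm:jEulerian_unimodality}. For $\frac{d+1}{2} < j \le d+1$, I use the recurrence of Lemma \ref{lem:jEulerian_recursion} and split the sum $t\sum_{l=1}^{j-1} A_l(d,t)$ at index $d+1-j$, writing $A_j(d+1,t) = a(t) + t\,b(t)$ with
\[
a(t) \ := \ t\sum_{l=1}^{d+1-j} A_l(d,t) \, + \, \sum_{l=j}^{d} A_l(d,t) \qquad \text{and} \qquad b(t) \ := \ \sum_{l=d+2-j}^{j-1} A_l(d,t) \, .
\]

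Next I would verify the two palindromicities demanded by Lemma \ref{lem:decomp_poly}. Using Lemma \ref{lem:jEulerian_symmetry} in the equivalent form $t^{d-1} A_l(d,1/t) = A_{d+1-l}(d,t)$, the substitution $l \mapsto d+1-l$ swaps the two summation ranges $\{1,\dots,d+1-j\}$ and $\{j,\dots,d\}$ appearing in $a(t)$, which yields $t^d a(1/t) = a(t)$. The same substitution leaves the range $\{d+2-j,\dots,j-1\}$ of $b(t)$ invariant, since it is symmetric around $\frac{d+1}{2}$, giving $t^{d-1}b(1/t) = b(t)$. Hence $a(t)$ is palindromic with center $\frac{d}{2}$ and $b(t)$ is palindromic with center $\frac{d-1}{2}$, as required.

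The unimodality of $a(t)$ and $b(t)$ is then exactly what is established in the case $\frac{d+1}{2} < j \le d+1$ of the proof of Theorem \ref{thm:jEulerian_unimodality}, treated there for both parities of $d+1$: the sum of the first and third summands of the recurrence (namely $a(t)$) is shown to have unimodal coefficients, and the middle summand (namely $t\,b(t)$, hence $b(t)$) is also shown to be unimodal. Lemma \ref{lem:decomp_poly} then implies that $A_j(d+1,t)$ is alternatingly increasing.

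I do not anticipate a real obstacle—the content is a repackaging of Theorem \ref{thm:jEulerian_unimodality}'s proof via the palindromic $a$/$b$ decomposition. The only item to handle carefully is the boundary behavior: when $j = \frac{d}{2}+1$ with $d$ even, the index range of $b(t)$ is empty so $b(t)=0$ (trivially palindromic and unimodal), and when $j = d+1$, one has $a(t)=0$ while $b(t)$ equals the classical Eulerian polynomial $A(d,t)$, which is palindromic and unimodal; Lemma \ref{lem:decomp_poly} applies in both edge cases.
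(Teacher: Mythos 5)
Your proposal is correct and reproduces the paper's own proof: the paper also splits the recurrence from Lemma~\ref{lem:jEulerian_recursion} at index $d+1-j$, obtains exactly your $a(t)$ and $b(t)$ (called $b(t)$ and $c(t)$ there), and invokes Lemmas~\ref{lem:jEulerian_symmetry}, \ref{thm:jEulerian_unimodality}, and \ref{lem:decomp_poly}. Your explicit palindromicity verification and boundary-case remarks merely elaborate on details the paper leaves to the reader.
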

\begin{proof}
If $\frac{d+1}{2} < j \le d+1$ then, by Lemma 
\ref{lem:jEulerian_recursion}, we have $A_j (d+1, t) = b(t) + t \, c(t)$
with
\[
	b(t) \ = \ t \sum _{l=1} ^{d+1-j} A_l (d, t) + \sum _{l=j} ^d A_l (d, t)
\]	
and
\[
	c(t) \ = \sum _{l=d+2-j} ^{j-1}A_l (d, t) \, .
\]
By Lemmas~\ref{lem:jEulerian_symmetry} and~\ref{thm:jEulerian_unimodality} 
we know 
$b(t)$ and $c(t)$ are unimodal and $t^d \, b(\frac{1}{t}) = b(t)$ and 
$t^{d-1} c(\frac{1}{t}) = c(t)$. Therefore the claim follows with 
Lemma~\ref{lem:decomp_poly}. 
\end{proof}

\begin{rem}
    Ehrenborg, Readdy, and Steingr{\'i}mmson showed in \cite{ehrenborg1998mixed} 
    that the $(A,j)$-Eulerian numbers have a geometric meaning as mixed volumes 
    of certain hypersimplices. It would be interesting to see whether this 
    yields a 
    geometric proof of Theorem~\ref{thm:jEulerian_unimodality} by using, e.g.,
    the Alexandrov--Fenchel inequalities.
\end{rem}
Brenti~\cite{brenti_qEulerian} proved that the type-$B$ Eulerian polynomials have
only real roots. 
\begin{thm}[{\cite[Corollary 3.7]{brenti_qEulerian}}]\label{thm:typeBrealrooted}
    The type-$B$ Eulerian polynomial $B(d,t)$ has only real roots. In particular, the 
    coefficients of $B(d,t)$ form a unimodal sequence.
\end{thm}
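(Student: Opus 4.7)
The plan is to argue by induction on $d$ via a polynomial recurrence combined with an interlacing / sign-change argument.

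The first step is to establish the recurrence
\[
B(d+1,t) \ = \ \bigl(1+(2d+1)t\bigr) \, B(d,t) + 2t(1-t) \, B'(d,t) ,
\]
which I would prove by a direct combinatorial bijection on signed permutations: given $(\sigep) \in B_d$, track how the descent count changes under each of the $2(d+1)$ ways to insert a signed copy of $\pm(d+1)$ into the word. Grouping the resulting $2(d+1) \, |B_d|$ contributions according to whether the descent count is preserved or increased by one (and collecting the linear-in-$k$ dependence into a derivative) reproduces the three-term recurrence above. A quick sanity check is that plugging in $d = 1$ gives $B(2,t) = (1+3t)(1+t) + 2t(1-t) = 1 + 6t + t^2$, matching the direct enumeration of descents in $B_2$.

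The base case $B(1,t)=1+t$ has the real root $t=-1$. For the inductive step, suppose $B(d,t)$ has $d$ distinct real roots $r_1 > r_2 > \cdots > r_d$ that are all strictly negative (negativity follows automatically from positivity of the coefficients together with $B(d,0)=b(d,0)=1$). Evaluating the recurrence at $t=r_i$ gives
\[
B(d+1,r_i) \ = \ 2 r_i (1-r_i) \, B'(d,r_i) ,
\]
and since $r_i<0<1-r_i$, the prefactor $2r_i(1-r_i)$ is negative. Because $B(d,t)$ has simple real roots, the values $B'(d,r_i)$ alternate in sign with $i$, hence so do the values $B(d+1,r_i)$. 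By the intermediate value theorem, this produces $d-1$ real roots of $B(d+1,t)$, one in each open interval $(r_{i+1},r_i)$. A further root lies in $(r_1,0)$ since $B(d+1,0)=1>0$ disagrees in sign with $B(d+1,r_1)$, and a final root lies in $(-\infty,r_d)$ as dictated by comparing the sign of $B(d+1,r_d)$ against the sign of $B(d+1,t)$ as $t\to-\infty$ (controlled by its positive leading coefficient). This accounts for all $d+1$ roots, keeping them simple and negative, and closes the induction.

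The main obstacle is the clean verification of the recurrence: the insertion argument requires a careful case analysis depending on the sign assigned to $d+1$ and on its relative position with respect to the zero letter $\sigma_0=0$, and the derivative term $2t(1-t)B'(d,t)$ must be recovered by summing linear contributions over insertion sites. Once the recurrence is in hand, real-rootedness is forced by the sign-change argument above, and the unimodality statement is then immediate from the classical fact that real-rooted polynomials with nonnegative coefficients have log-concave and hence unimodal coefficients (e.g.\ via Newton's inequalities).
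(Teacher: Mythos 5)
Your proposal is correct, and it takes a genuinely different route from the paper. The paper itself offers no proof of this statement: it simply cites Brenti's \emph{Corollary~3.7} in~\cite{brenti_qEulerian}, where real-rootedness is obtained by rather different means (Brenti works in the framework of $q$-Eulerian polynomials, P\'olya frequency sequences and total positivity). Your argument is the classical and most elementary route: you derive the three-term recurrence
\[
B(d+1,t) \ = \ \bigl(1+(2d+1)t\bigr)\,B(d,t)+2t(1-t)\,B'(d,t)
\]
via insertion of a signed copy of $d+1$ (each $(\sigma,\epsilon)\in B_d$ with $k$ descents yields, among the $2(d+1)$ insertions, exactly $2k+1$ with $k$ descents and $2(d-k)+1$ with $k+1$ descents, which is precisely the recurrence $b(d+1,k)=(2k+1)b(d,k)+(2d-2k+3)b(d,k-1)$), and then run the standard sign-change / Rolle interlacing induction. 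All the pieces check out: distinctness of the roots is preserved because the $d+1$ new roots land in disjoint open subintervals of $(-\infty,0)$, and negativity of the roots follows from positivity of the coefficients together with $B(d,0)=1$. One point worth noting for comparison: the paper does eventually reprove a strengthening of this theorem in Theorem~\ref{thm:typeBalternatinglyincr} (real-rootedness of the refined $(B,l)$-Eulerian polynomials), and there the route is completely different and geometric in flavor---the authors express $B_{l+1}(d+1,t)$ as a nonnegative combination of $(A,j)$-Eulerian polynomials (Proposition~\ref{thm:typeB}) and then invoke the Savage--Visontai interlacing result (Theorem~\ref{thm:jEulerianrealrooted}). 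Your recurrence-based proof gives real-rootedness of $B(d,t)$ itself with minimal machinery, but it does not obviously extend to the refined polynomials $B_l(d,t)$, which is what the paper ultimately needs; the paper's route is heavier but delivers the refinement uniformly.
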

In Section~\ref{sec:typeBzonotopes} we prove the following explicit 
expression of $(B,l)$-Eulerian polynomials in terms of $(A,j)$-Eulerian 
polynomials.
\begin{prop}\label{thm:typeB} For all $0\leq l\leq d$
 \begin{align*}
 B_{l+1}(d+1,t) \ &= \ 2^l\sum_{j=0}^{d-l} \binom{d-l}{j} A_{j+l+1}(d+1, t) \, .
 \end{align*}
\end{prop}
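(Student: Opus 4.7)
The plan is to prove the identity by establishing a weighted bijection between the signed permutations counted by $B_{l+1}(d+1,t)$ and certain permutations in $S_{d+2}$ whose first and last letters are prescribed, and then to recognize the resulting doubly-refined descent statistic as a singly-refined one.

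First, given $(\sigma,\epsilon)\in B_{d+1}$ with $\sigma_{d+1}=d+1-l$ and $\epsilon_{d+1}=+1$, I would call position $i\in[d]$ \emph{large} if $\sigma_i>d+1-l$ (there are $l$ such positions) and \emph{small} otherwise (there are $d-l$ such positions). Let $j_L$ and $j_S$ denote the number of negative signs on large and small positions respectively. The natural object to form is the extended flag word $(w_0,w_1,\dots,w_{d+1})$ with $w_0=0$ and $w_i=\epsilon_i\sigma_i$; ranking these $d+2$ distinct integers produces a permutation $\tau\in S_{d+2}$. A direct computation shows $\tau_1=j_L+j_S+1$ (the rank of $0$) and $\tau_{d+2}=j_L+d-l+2$ (the rank of $d+1-l$). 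Since the rank map preserves the integer order of adjacent entries, a descent of $\tau$ at position $i\in\{1,\dots,d+1\}$ is equivalent to $w_{i-1}>w_i$, which matches a type-$B$ descent of $(\sigma,\epsilon)$ at position $i-1\in\{0,\dots,d\}$; thus $\des(\tau)=\des(\sigma,\epsilon)$.

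Next I would compute the fiber sizes. For a fixed target $\tau$ with the stated first and last letters, the ranks determine the sign and relative order of each $w_i$ but leave free the choice of which $j_L$ of the $l$ large absolute values and which $j_S$ of the $d-l$ small absolute values are negated; once these choices are fixed, the positions they occupy are forced by the rank data. Hence each $\tau$ has exactly $\binom{l}{j_L}\binom{d-l}{j_S}$ preimages, and summing over $\tau$ gives
\[
B_{l+1}(d+1,t)\ =\ \sum_{j_L,j_S}\binom{l}{j_L}\binom{d-l}{j_S}\,T_{j_L,j_S}(t),
\]
where $T_{j_L,j_S}(t)=\sum_{\tau} t^{\des(\tau)}$ ranges over $\tau\in S_{d+2}$ with the prescribed $\tau_1$ and $\tau_{d+2}$.

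The crux of the argument is the identity
\[
T_{j_L,j_S}(t)\ =\ A_{j_S+l+1}(d+1,t),
\]
which says the doubly-refined Eulerian polynomial depends only on $j_S$ and equals an ordinary $(A,j)$-Eulerian polynomial. I would prove this in two steps: (i) show $T_{j_L,j_S}(t)=T_{0,j_S}(t)$ by a value-swap bijection that simultaneously decrements the first and last letters—interchanging the values $\tau_1$ and $\tau_1-1$ and the values $\tau_{d+2}$ and $\tau_{d+2}-1$, with a careful case analysis showing the descent changes at the affected positions cancel; (ii) for the base case $j_L=0$, show $T_{0,j_S}(t)=A_{j_S+l+1}(d+1,t)$ by deleting position $1$ of $\tau$ and relabeling, tracking the extra indicator $[\tau_1>\tau_2]$ via a short computation that absorbs it into the descent count of the resulting permutation in $S_{d+1}$.

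Substituting the key identity and using $\sum_{j_L=0}^{l}\binom{l}{j_L}=2^l$ immediately yields the proposition. The main obstacle is step (i): tracking the descent changes under the value-swap requires subdividing into cases depending on whether the swapped values are adjacent in the permutation or not. An alternative route would be to bypass the key identity by finding a descent-preserving bijection directly from signed permutations to the triples $(\tau,\phi,\psi)$ with $\tau\in S_{d+1},\ \tau_{d+1}=d+1-l-|\psi|$, but as the most natural such attempts only preserve the descent \emph{distribution} rather than individual descent values, the $S_{d+2}$ route via ranks appears to be the cleanest.
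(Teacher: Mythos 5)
Your combinatorial approach---mapping signed permutations to ordinary permutations via a rank map, counting fibers, and reducing a doubly-refined Eulerian polynomial to a singly-refined one---is genuinely different from the paper's proof, which is geometric: the paper identifies $B_{l+1}(d+1,t)$ as the Ehrhart $h^\ast$-polynomial of the half-open cube $[-1,1]^d_l$ via Theorem~\ref{thm:Ehrhart_B_ell}, notes that this cube is lattice-isomorphic to the half-open parallelepiped $\halflozenge([l])$ on the vectors $2e_1,\dots,2e_d$, and then reads off the binomial identity from Theorem~\ref{thm:halfopenparallelepipeds} using the fact that $|\Box(K)\cap\Z^d|=1$ for every $K\subseteq[d]$. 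Your rank bijection $B_{d+1}\to S_{d+2}$ (with the correct determination of $\tau_1$ and $\tau_{d+2}$ from $j_L,j_S$) and the fiber count $\binom{l}{j_L}\binom{d-l}{j_S}$ are both correct, and the key identity $T_{j_L,j_S}(t)=A_{j_S+l+1}(d+1,t)$ is in fact true; the difficulty lies in how you propose to prove it.

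There is a genuine gap in step (i). The value-swap you propose---interchange $\tau_1\leftrightarrow\tau_1-1$ and $\tau_{d+2}\leftrightarrow\tau_{d+2}-1$---does not preserve the descent number. Take $d=3$, $l=1$, $j_L=j_S=1$, so $\tau\in S_5$ with $\tau_1=3$, $\tau_5=5$. Then $\tau=31245$ has $\des(\tau)=1$, but the swap $(3\ 2)(5\ 4)$ sends it to $21354$, which has descent number $2$. The failure is structural: whenever $\tau_{d+2}-1$ sits immediately to the left of $\tau_{d+2}$ (as $4$ sits next to $5$ here), the swap manufactures a descent at position $d+1$ with no compensating change at any other affected position, so no case analysis repairs it. The reduction $T_{j_L,j_S}=T_{0,j_S}$ can instead be proved by the \emph{cyclic value rotation} $x\mapsto x-1$ for $x>1$, $1\mapsto d+2$. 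This map changes the descent number by $[\tau_1=1]-[\tau_{d+2}=1]$: the value $1$ contributes one descent on its left and one ascent on its right when it is interior, and both flip. Since $\tau_1=j_L+j_S+1>1$ and $\tau_{d+2}=j_L+d-l+2>1$ whenever $j_L\geq1$, the rotation is descent-preserving there and lowers both $\tau_1$ and $\tau_{d+2}$ by one, i.e.\ lowers $j_L$ by one with $j_S$ fixed. (On the example, it sends $31245$ to $25134$, which has one descent.) The same rotation, applied $j_S$ further times after you delete position $1$ and standardize to $\pi\in S_{d+1}$, furnishes the ``short computation'' in step (ii): during those $j_S$ rotations the value $1$ lands in position $1$ exactly once if $\pi_1\leq j_S$ and never otherwise, adding back precisely the indicator $[\tau_1>\tau_2]$, while $\pi_{d+1}$ slides from $d-l+1$ to $d-l+1-j_S$. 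With the value-swap replaced by this rotation throughout, your argument goes through and gives a correct, self-contained combinatorial proof.
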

We obtain the following generalization of Theorem~\ref{thm:typeBrealrooted} as 
a corollary, which to our knowledge is new.
\begin{thm}\label{thm:typeBalternatinglyincr}
The polynomial $B_l(d,t)$ has only real roots and is alternatingly increasing for all $1\leq l\leq d$.
\end{thm}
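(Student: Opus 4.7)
The plan is to prove real-rootedness directly from Proposition~\ref{thm:typeB} and Theorem~\ref{thm:jEulerianrealrooted}, and to prove the alternatingly increasing property by computing the symmetric decomposition $B_{l+1}(d+1,t) = a(t) + t\,b(t)$ and showing that both $a(t)$ (palindromic with center $d/2$) and $b(t)$ (palindromic with center $(d-1)/2$) are unimodal; the alternatingly increasing property then follows from Lemma~\ref{lem:decomp_poly}. For real-rootedness, since Proposition~\ref{thm:typeB} expresses $B_{l+1}(d+1,t)$ as a nonnegative linear combination of the $(A,j)$-Eulerian polynomials $A_{j+l+1}(d+1,t)$, Theorem~\ref{thm:jEulerianrealrooted} of Savage and Visontai applies directly.

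For $b(t)$, the plan is to first combine Lemma~\ref{lem:jEulerian_symmetry} with the binomial identity $\binom{d-l}{j}=\binom{d-l}{d-l-j}$ to obtain $t^d B_{l+1}(d+1, 1/t) = 2^l \sum_{j=0}^{d-l} \binom{d-l}{j} A_{j+1}(d+1,t)$. Subtracting this from Proposition~\ref{thm:typeB} and using the telescoping identity $A_{k+1}(d+1,t) - A_k(d+1,t) = (t-1)\,A_k(d,t)$, which is immediate from Lemma~\ref{lem:jEulerian_recursion}, I obtain $b(t) = 2^l \sum_{j=0}^{d-l} \binom{d-l}{j} \sum_{s=j+1}^{j+l} A_s(d,t)$, manifestly a nonnegative linear combination of $A_s(d,t)$'s. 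Theorem~\ref{thm:jEulerianrealrooted} then forces $b(t)$ to have only real roots, and hence $b(t)$ is unimodal; palindromy of $b(t)$ with center $(d-1)/2$ is an easy check after swapping the order of summation, since the resulting coefficient of $A_s(d,t)$ is invariant under $s \mapsto d+1-s$.

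For $a(t)$, an analogous expansion via $a(t) = t^d B_{l+1}(d+1, 1/t) - b(t)$ and Lemma~\ref{lem:jEulerian_recursion} represents $a(t) = 2^l \bigl[P(t) + t\,Q(t)\bigr]$, where $P(t)$ and $Q(t)$ are explicit nonnegative linear combinations of $A_s(d,t)$'s related by the reciprocal relation $Q(t) = t^{d-1} P(1/t)$; in particular $a(t)$ has nonnegative coefficients. The main obstacle is establishing unimodality of $a(t)$, since $P + tQ$ is not itself a nonnegative combination of the $A_s(d,t)$ alone. I would address this by invoking the classical fact that the symmetric decomposition of a real-rooted polynomial with nonnegative coefficients has both halves real-rooted (which can be proved via an interlacing argument in the spirit of Br\"and\'en's work~\cite{branden2006linear}); applied to the already-established real-rooted $B_{l+1}(d+1,t)$, this yields real-rootedness of $a(t)$. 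Since $a(t)$ is palindromic with nonnegative coefficients, real-rootedness forces unimodality, and Lemma~\ref{lem:decomp_poly} then gives that $B_{l+1}(d+1,t)$ is alternatingly increasing.
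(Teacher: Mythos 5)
Your real-rootedness argument is correct and matches the paper's (Proposition~\ref{thm:typeB} together with Theorem~\ref{thm:jEulerianrealrooted}). Your derivation of $b(t)$ is also correct and is a nice alternative: the telescoping identity $A_{k+1}(d+1,t)-A_k(d+1,t)=(t-1)A_k(d,t)$ does follow from Lemma~\ref{lem:jEulerian_recursion}, it exhibits $b(t)$ as a manifestly nonnegative combination of the $A_s(d,t)$, and Theorem~\ref{thm:jEulerianrealrooted} then gives real-rootedness (hence unimodality) of $b(t)$; palindromy with center $(d-1)/2$ checks out.

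The gap is in the treatment of $a(t)$. The ``classical fact'' you invoke --- that if $h$ is real-rooted with nonnegative coefficients then both pieces $a$ and $b$ of its symmetric decomposition $h=a+t\,b$ are real-rooted --- is simply false. For instance, $h(t)=(1+t)(1+2t)(1+3t)=1+6t+11t^2+6t^3$ is real-rooted with nonnegative coefficients, but its degree-$3$ symmetric decomposition gives $a(t)=1+t+t^2+t^3=(1+t)(1+t^2)$, which has non-real roots. Even worse, for $h(t)=(1+t)(1+2t)(1+10t)=1+13t+32t^2+20t^3$ one gets $a(t)=1-6t-6t^2+t^3$, which does not even have nonnegative coefficients. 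So real-rootedness of $B_{l+1}(d+1,t)$ alone tells you nothing about unimodality of its $a$-part; the results on real-rooted symmetric decompositions that you may have in mind require an additional interlacing hypothesis between $h(t)$ and $t^d h(1/t)$, which you would need to establish separately. The paper sidesteps this difficulty entirely: using $\binom{d-l}{j}=\binom{d-l}{d-l-j}$ it writes $B_{l+1}(d+1,t)=2^{l-1}\sum_{j}\binom{d-l}{j}\bigl(A_{j+l+1}(d+1,t)+A_{d-j+1}(d+1,t)\bigr)$ and then applies Lemma~\ref{lem:altincr}, which asserts directly (via the palindromic regroupings used in the proof of Theorem~\ref{thm:jEulerian_unimodality}) that $A_i(d+1,t)+A_j(d+1,t)$ is alternatingly increasing whenever $i+j\geq d+2$; here $(j+l+1)+(d-j+1)=d+l+2\geq d+2$. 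Lemma~\ref{lem:altincr} is precisely the ingredient your argument is missing for the unimodality of $a(t)$, and without it (or some substitute proving your $P(t)+tQ(t)$ is unimodal) the proposal does not go through.
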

\begin{rem}
Theorem \ref{thm:typeBalternatinglyincr} can be generalized further. Indeed, 
we could have defined $(B,l)$-Eulerian numbers for integers $l\in [2d+1]\setminus \{d+1\}$. For $l\in [d]$ the result can also be seen from a bijection of Pensyl and Savage \cite[Theorem 3]{pensyl2013lecture} 
between signed permutations and $\bf{s}$-lecture hall partitions for ${\bf s}=(2,4,\ldots,2d)$ together with the results from \cite{SavageVisontai} and \cite[Remark 6.11]{beckbrauneulerian}. By changing the sign of every letter of a signed permutation we obtain $B_l(d,t)=t^dB_{2d+2-l}(d,\frac{1}{t})$ and thus real-rootedness for $d+1<l\leq 2d+1$.
\end{rem}
\begin{lem}\label{lem:altincr}
For all $d\geq 1$ and $1\leq i,j\leq d$ with $i+j\geq d+2$,
\[ A_i(d+1,t)+A_j(d+1,t) \]
is alternatingly increasing.
\end{lem}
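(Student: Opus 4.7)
The plan is to apply Lemma~\ref{lem:decomp_poly}: it suffices to find the decomposition $A_i(d+1,t) + A_j(d+1,t) = a(t) + t b(t)$, with $a$ and $b$ palindromic of centers $\frac{d}{2}$ and $\frac{d-1}{2}$, and show that both are unimodal. Assume without loss of generality $i \le j$. If $i > \frac{d+1}{2}$ then also $j > \frac{d+1}{2}$, and Lemma~\ref{lem:jEulerian_alt_incr} gives that both $A_i(d+1,t)$ and $A_j(d+1,t)$ are alternatingly increasing, hence so is their sum. We therefore focus on the remaining case $i \le \frac{d+1}{2} < j$, in which necessarily $i<j$.

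Applying the recursion of Lemma~\ref{lem:jEulerian_recursion} to both summands and using the hypothesis $i+j\ge d+2$ (which gives $d+1-j \le i-1$ and $d+2-i \le j$), rearrangement yields
\[
A_i(d+1,t) + A_j(d+1,t) \ = \ 2Q + (1+t)R + (1+t)X + 2tY,
\]
where $Q = t\sum_{l=1}^{d+1-j} A_l(d,t) + \sum_{l=j}^d A_l(d,t)$, $R = \sum_{l=i}^{d+1-i} A_l(d,t)$, $X = \sum_{l=d+2-i}^{j-1} A_l(d,t)$ and $Y = \sum_{l=d+2-j}^{i-1} A_l(d,t)$. Using Lemma~\ref{lem:jEulerian_symmetry} to compute the mirror $t^d h(1/t) = A_{d+2-i}(d+1,t) + A_{d+2-j}(d+1,t)$ and solving $b = (h - t^d h(1/t))/(t-1)$ identifies the Stapledon decomposition as $b = X + Y$ and $a = h - tb = 2Q + (1+t)R + (X+tY)$.

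The polynomial $b = X+Y$ is a non-negative combination of the $A_l(d,t)$, so it is real-rooted by Theorem~\ref{thm:jEulerianrealrooted} and in particular palindromic unimodal of center $\frac{d-1}{2}$. For $a$, each of its three summands $2Q$, $(1+t)R$, $X+tY$ is palindromic of center $\frac{d}{2}$ (a direct check using the reflection $l \mapsto d+1-l$), so it suffices to show each is unimodal. The summand $Q$ is precisely the palindromic piece appearing in the proof of Lemma~\ref{lem:jEulerian_alt_incr} for $A_j(d+1,t)$ (applicable since $j > \frac{d+1}{2}$), where it is already shown to be unimodal. The summand $R$, a sum of $A_l(d,t)$ over the reflection-symmetric index set $[i,d+1-i]$, is palindromic of center $\frac{d-1}{2}$ and real-rooted by Theorem~\ref{thm:jEulerianrealrooted}; since its coefficients are non-negative, its real roots are non-positive, whence $(1+t)R$ remains real-rooted with non-negative coefficients and is therefore unimodal.

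The crux is the unimodality of $X+tY$. The complement involution $\sigma \mapsto (d+1-\sigma_1,\ldots,d+1-\sigma_d)$ on $S_d$ exchanges $a_l(d,k)$ with $a_{d+1-l}(d,d-1-k)$, yielding $Y_k = X_{d-1-k}$, i.e. $Y(t) = t^{d-1}X(1/t)$; hence $X+tY$ is palindromic of center $\frac{d}{2}$. Moreover, every $l \in [d+2-i, j-1]$ satisfies $l > \frac{d}{2}$, so by Lemma~\ref{lem:jEulerian_alt_incr} each $A_l(d,t)$ is alternatingly increasing as a polynomial of degree $d-1$; summing, $X$ itself is alternatingly increasing, and thus unimodal with peak at $\floor{d/2}$. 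Writing the coefficients of $X+tY$ as $c_k = X_k + X_{d-k}$ (with $X_d = 0$), for $0 \le k \le \floor{d/2}-1$ one has
\[
c_{k+1} - c_k \ = \ (X_{k+1} - X_k) + (X_{d-1-k} - X_{d-k}),
\]
which is non-negative: the first summand because $X$ is non-decreasing on $[0,\floor{d/2}]$, the second because $X$ is non-increasing on $[\floor{d/2},d-1]$ with $X_d = 0$ (so $X_{d-1-k} \ge X_{d-k}$ in the relevant range). Combined with palindromicity this proves $X+tY$ unimodal; summing the three pieces gives $a$ palindromic unimodal, and Lemma~\ref{lem:decomp_poly} delivers the claim. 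The main obstacle is the unimodality of $X+tY$, whose proof hinges on extracting the alternating-increasingness of $X$ concealed in Lemma~\ref{lem:jEulerian_alt_incr}.
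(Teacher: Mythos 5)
Your proof is correct, and at the structural level it is the same argument as the paper's: expand both summands via Lemma~\ref{lem:jEulerian_recursion}, read off the Stapledon decomposition $a(t)+t\,b(t)$, check that $a$ and $b$ are palindromic and unimodal of the right degrees, and invoke Lemma~\ref{lem:decomp_poly}. Your $b=X+Y$ is exactly the paper's (necessarily, by uniqueness). The real difference is in how the unimodality of $a$ is established. The paper groups $a$ as $(Q'+Q)+(1+t)R$, where $Q'=t\sum_{l=1}^{i-1}A_l(d,t)+\sum_{l=d+2-i}^{d}A_l(d,t)$ is the palindromic piece coming from $A_i(d+1,t)$, and disposes of all three summands with a terse ``as in the proof of Theorem~\ref{thm:jEulerian_unimodality}'' (i.e., all three are unimodal by the inductive peak-location argument already worked out there). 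You instead split $Q'=Q+(X+tY)$ and thus must prove $X+tY$ unimodal separately; for this you pull the alternating-increasing property of $A_l(d,t)$ (for $l>d/2$) out of Lemma~\ref{lem:jEulerian_alt_incr} and finish with a direct coefficient computation, and you also invoke real-rootedness (Theorem~\ref{thm:jEulerianrealrooted}) for $(1+t)R$, where the paper again leans on Theorem~\ref{thm:jEulerian_unimodality}. Both are valid; your version is more explicit where the paper is laconic, but the finer grouping makes the $X+tY$ step do more work than the paper's grouping requires. (You should perhaps note explicitly that $X$ and $Y$ may be empty, when $i+j=d+2$, in which case everything trivializes; but that is cosmetic.)
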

\begin{proof}
If both $i$ and $j$ are greater than $ \frac{d+1}{2}$ then the claim 
follows from Lemma~\ref{lem:jEulerian_alt_incr}. So we may suppose that $i\leq 
\frac{d+1}{2}$; then we must have $j=d+2-i+k>\frac{d+1}{2}$ for some $0\leq 
k\leq i-1$ by assumption. Using the recursions in the proof of Theorem 
\ref{thm:jEulerian_unimodality} we obtain
\begin{align*}
&A_i(d+1,t)+A_j(d+1,t) \ = \ t\sum_{l=1}^{i-1}A_l(d,t)+\sum_{l=i}^{d+1-i}A_l(d,t)+\sum_{l=d+2-i}^{d}A_l(d,t)+ t\sum_{l=1}^{i-k-1}A_l(d,t)\\
&\qquad +t\sum_{l=i-k}^{i-1}A_l(d,t)+t\sum_{l=i}^{d-i+1}A_l(d,t)+t\sum_{l=d-i+2}^{d+k-i+1}A_l(d,t)+
\sum_{l=d+k-i+2}^{d}A_l(d,t) \, .
\end{align*}
As in the proof of Theorem 
\ref{thm:jEulerian_unimodality}, the first, third, fourth, and 
last summand add up to a palindromic polynomial with center of symmetry at 
$\frac{d}{2}$. By Lemma~\ref{lem:jEulerian_symmetry} this is also true for the 
sum of the second and the sixth sum. The remaining sums yield a polynomial 
that is palindromic with center of symmetry at $\frac{d-1}{2}$ times a factor 
$t$. 
Therefore the claim follows from Lemma~\ref{lem:decomp_poly}.
\end{proof}
\begin{proof}[Proof of Theorem~\ref{thm:typeBalternatinglyincr}]
By Proposition~\ref{thm:typeB},
\[
 B_{l+1}(d+1,t) \ = \ 2^{l-1}\sum_{j=0}^{d-l} \binom{d-l}{j} \Big( A_{j+l+1}(d+1, t)+A_{d-j+1}(d+1,t) \Big) \, .
\]
By Lemma~\ref{lem:altincr}, $A_{j+l+1}(d+1,t)+A_{d-j+1}(d+1,t)$ is alternatingly increasing for all $0\leq j\leq d-l$, and so is $B_{l+1}(d+1,t)$, since it is a positive linear combination of polynomials of that form. Real-rootedness follows from Theorem~\ref{thm:jEulerianrealrooted}.
\end{proof}

\section{Geometry}
\subsection{Half-open unit cubes}	
For $j \in \lbrace 0,\ldots,d\rbrace$ we define the half-open unit cube
\[
    C_j ^d \ := \ [0,1]^d\setminus \lbrace \x \in \mathbb{R}^d : \,  x_d = 
    x_{d-1} = \dots = x_{d+1-j}=1\rbrace \, .
\]
The subscript $j$ indicates the number of facets removed from $C^d := [0,1]^d$. The {\bf $j$-descent set} $\Des _j (\sigma)\subseteq \left\{ 1, \dots, d \right\}$ of a permutation $\sigma \in S_d$ is
\[
\Des _j (\sigma) \ := \
	\begin{cases}
		\Des (\sigma)\cup \left\{ d \right\}	& \text{ if }  d+1 - j \leq \sigma_d \leq d , \\
		\Des (\sigma)  					& \text{ otherwise} , \\
	\end{cases}
\]
and the {\bf $j$-descent number} $\des _j (\sigma) := |\Des_j(\sigma)|$ counts the $j$-descents of $\sigma$.
We can describe the $(A,j)$-Eulerian numbers in terms of $j$-descents, as the following lemma shows.
\begin{lem}
\label{lem:jdescents}
For all $0\leq j\leq d$ and $0\leq k\leq d$,
\[
	\left| \left\{ \sigma \in S_d: \,\des _j (\sigma)=k\right\} \right| \ = \ a_{j+1} (d+1,k) \, .
\]
\end{lem}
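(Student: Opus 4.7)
The plan is to exhibit an explicit bijection between the two sets being counted and check that it preserves the descent statistic.

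Define a map $\Phi \colon S_d \to \{\tau \in S_{d+1} : \tau_{d+1} = d+1-j\}$ as follows. Let $\phi \colon [d] \to [d+1]\setminus\{d+1-j\}$ be the unique order-preserving bijection, namely
\[
\phi(l) \ = \ \begin{cases} l & \text{if } l < d+1-j, \\ l+1 & \text{if } l \geq d+1-j. \end{cases}
\]
Given $\sigma = \sigma_1\sigma_2\cdots\sigma_d \in S_d$, define $\tau = \Phi(\sigma)$ by $\tau_i := \phi(\sigma_i)$ for $i \in [d]$ and $\tau_{d+1} := d+1-j$. Since $\phi$ is a bijection from $[d]$ onto $[d+1]\setminus\{d+1-j\}$, the word $\tau$ is indeed a permutation of $[d+1]$ ending in $d+1-j$, and $\Phi$ is invertible (remove the last letter of $\tau$ and apply $\phi^{-1}$).

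Next I would compare descents. For $i \in [d-1]$, since $\phi$ is order-preserving, $\tau_i > \tau_{i+1}$ if and only if $\sigma_i > \sigma_{i+1}$; hence positions $1,\ldots,d-1$ contribute exactly $\Des(\sigma)$ to $\Des(\tau)$. For position $i = d$, note that $\tau_d > \tau_{d+1} = d+1-j$ if and only if $\phi(\sigma_d) > d+1-j$, which by the definition of $\phi$ happens precisely when $\sigma_d \geq d+1-j$. Combined with the automatic bound $\sigma_d \leq d$, this means $d \in \Des(\tau)$ if and only if $d+1-j \leq \sigma_d \leq d$, matching exactly the extra element appearing in $\Des_j(\sigma)$. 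Therefore $\des(\tau) = \des_j(\sigma)$.

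Putting the two together, $\Phi$ restricts to a bijection between $\{\sigma \in S_d : \des_j(\sigma) = k\}$ and $\{\tau \in S_{d+1} : \tau_{d+1} = d+1-j,\ \des(\tau) = k\}$, and the cardinality of the latter is by definition $a_{j+1}(d+1,k)$, proving the lemma. The only slightly delicate point is making sure the boundary cases $j=0$ (where no descent at $d$ is ever added, and $\phi$ is the identity) and $j=d$ (where a descent at $d$ is always added, and $\tau_{d+1}=1$) are handled by the same formula, but both fall out immediately from the description of $\phi$ and the inequality $d+1-j \leq \sigma_d \leq d$.
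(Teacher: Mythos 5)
Your map $\Phi$ is exactly the map $\sigma \mapsto \sigma'$ used in the paper's proof, and your argument (that $\phi$ is order-preserving on positions $1,\dots,d-1$, and that the last position contributes a descent precisely when $d+1-j\leq\sigma_d$) is a correct, somewhat more detailed write-up of the verification the paper leaves as ``straightforward to check.'' The proposal is correct and takes essentially the same approach as the paper.
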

\begin{proof}
For every $\sigma  \in S_d$ define $\sigma ' \in S_{d+1}$ by
\[
\sigma_i' \ := \ \begin{cases}
\sigma_i & \text{ if }\sigma_i < d+1-j,\\
\sigma_i+1 & \text{ if }\sigma_i \geq d+1-j,\\
\sigma_{d+1}=d+1-j & \text{ otherwise.}
\end{cases}
\]
It is straightforward to check that the map $\sigma \mapsto \sigma'$ 
bijectively maps $S_d$ to the set of permutations on $[d+1]$ ending with 
$d+1-j$, and thus $\des_j(\sigma)=\des(\sigma ')$.
\end{proof}
The discrete volume of half-open unit cubes plays a distinguished role when 
determining the $h^\ast$-vector of parallelepipeds with respect to arbitrary 
$\Z^d$-valuations; this is based on the following result.

\begin{thm}\label{thm:hvectorunitcube}
Let $0 \le j \le d$. Then
\[
  \Ehr \left( C_j^d, t \right)
  \ = \ \frac{A_{j+1}(d+1, t)}{(1-t)^{d+1}} \, .
\]
In particular, the $h^\ast$-polynomial is real-rooted and its coefficients 
form a unimodal sequence.
\end{thm}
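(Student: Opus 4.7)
My plan is to compute the Ehrhart series of $C_j^d$ geometrically by half-open decomposition along the standard ``staircase'' triangulation of the unit cube, and then match the result with the refined Eulerian polynomial via Lemma~\ref{lem:jdescents}.

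First, I would write the unit cube as the union of the unimodular simplices
\[
\Delta_\sigma \ := \ \left\{\x\in[0,1]^d:\, 0\le x_{\sigma_1}\le x_{\sigma_2}\le\cdots\le x_{\sigma_d}\le 1\right\},\qquad \sigma\in S_d,
\]
and I would observe that $C_j^d = \half{q}{[0,1]^d}$ for any $q=(q_1,\dots,q_d)\in\R^d$ whose coordinates satisfy $0<q_1<q_2<\cdots<q_{d-j}<1<q_{d+1-j}<\cdots<q_d$ (for instance $q_i=i/(d+1)$ for $i\le d-j$ and $q_i=1+i$ otherwise). Such a $q$ is generic (its coordinates are pairwise distinct and none lies on $\{x_i=0\}$ or $\{x_i=1\}$), and the facets of $[0,1]^d$ visible from $q$ are exactly $\{x_i=1\}$ for $i\ge d+1-j$. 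Applying Lemma~\ref{lem:ho} to the triangulation above gives the disjoint decomposition
\[
C_j^d \ = \ \biguplus_{\sigma\in S_d} \half{q}{\Delta_\sigma},
\]
and Corollary~\ref{cor:ho-val} reduces the Ehrhart series of $C_j^d$ to a sum of Ehrhart series of half-open unimodular simplices.

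Next I would count, for each $\sigma$, the number $k_\sigma$ of facets of $\Delta_\sigma$ visible from $q$. The facets of $\Delta_\sigma$ are given by $\{x_{\sigma_1}=0\}$, the internal walls $\{x_{\sigma_i}=x_{\sigma_{i+1}}\}$ for $1\le i\le d-1$, and $\{x_{\sigma_d}=1\}$. Since $q_i>0$ for all $i$, the facet $\{x_{\sigma_1}=0\}$ is never visible. The internal wall indexed by $i$ is visible precisely when $q_{\sigma_i}>q_{\sigma_{i+1}}$, which by our choice of $q$ occurs exactly when $\sigma_i>\sigma_{i+1}$; thus the contribution from internal walls equals $\des(\sigma)$. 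Finally, $\{x_{\sigma_d}=1\}$ is visible iff $q_{\sigma_d}>1$, i.e.\ iff $\sigma_d\ge d+1-j$. Comparing with the definition of the $j$-descent set I obtain $k_\sigma=\des_j(\sigma)$.

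Combining these ingredients with the elementary computation $\Ehr(\half{q}{\Delta_\sigma},t)=t^{k_\sigma}/(1-t)^{d+1}$ quoted right before Lemma~\ref{lem:ho}, and applying Lemma~\ref{lem:jdescents}, I obtain
\[
\Ehr(C_j^d,t) \ = \ \frac{1}{(1-t)^{d+1}}\sum_{\sigma\in S_d} t^{\des_j(\sigma)} \ = \ \frac{A_{j+1}(d+1,t)}{(1-t)^{d+1}}.
\]
Real-rootedness and unimodality of the $h^\ast$-polynomial then follow immediately from Theorem~\ref{thm:jEulerianrealrooted}. The only delicate point in the argument is the choice of $q$: it must simultaneously make only the ``last $j$'' cube facets visible and induce the natural order on the coordinates, so that visibility of the internal walls of $\Delta_\sigma$ is governed purely by descents of $\sigma$; once the right $q$ is identified, the facet-visibility count is routine.
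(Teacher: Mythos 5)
Your proof is correct and takes essentially the same approach as the paper's: the staircase triangulation of $[0,1]^d$ into unimodular simplices $\Delta_\sigma$, a half-open decomposition of $C_j^d$ with missing facets counted by $\des_j(\sigma)$, and an appeal to Lemma~\ref{lem:jdescents}. The only difference is packaging: the paper writes down the half-open simplices $\triangle_\sigma^{d,j}$ directly by specifying the strict inequalities and asserts the disjoint union, whereas you derive the same decomposition by choosing an explicit generic light source $q$ and invoking Lemma~\ref{lem:ho} and Corollary~\ref{cor:ho-val}; this makes the disjointness automatic and the missing-facet count a visibility computation, which is a clean way to justify the step the paper states more tersely.
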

\begin{proof}
We will decompose $C^d_j$ into half-open unimodular 
simplices induced by the arrangement of hyperplanes given by inequalities of 
the form $x_i=x_j$, $i\neq j$; see also~\cite[Chapter 6]{BS}.
\begin{figure}
    \begin{tikzpicture}[scale=0.3]
    \draw[->] (0,-1) -- (0,8.5);
    \draw[->] (-1,0) -- (8.5,0);
    \draw (-0.1,6.5) -- (0,6.5) node[left=1pt] {$1$};
    \draw (-1,8) node {$x_2$};
    \draw (8,-1) node {$x_1$};
    \draw[thick] (6.5,6.3) -- (6.5,0);
    \draw[thick] (0.2,0.2) -- (6.3,6.3);
    \draw[thick] (0.2,0) -- (6.5,0);
    \draw[dotted,thick] (0,6.3) -- (6.3, 6.3);
    \draw[dotted,thick] (0.3,0) -- (6.5, 6.2);
    \filldraw[fill=red!30,draw=red!40,opacity=0.7] (0,0) --(0,6.3)-- 
    (6.3,6.3)--(0,0);
    \filldraw[fill=blue!30,draw=blue!40,opacity=0.7] (0.3,0) --(6.5, 6.2)-- 
    (6.5,0)--(0.3,0);
    \draw[black] (4.5,2) node {$(12)$};
    \draw[black] (2,4.5) node {$\id$};
    \end{tikzpicture}
\caption{Decomposition of $C_1^2$ into half-open unimodular simplices.}
\end{figure}
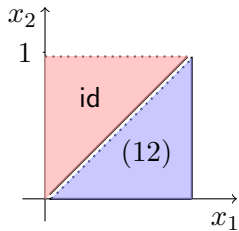
For $0 \le j \le d$ and $\sigma \in S_d$, define the half-open unimodular 
simplex
\begin{align*}
\triangle_{\sigma}^{d, j}
\ :=& \
\begin{Bmatrix}
\x \in C^d_j : \, x_{\sigma_1} \le x_{\sigma_2} \le \dots \le x_{\sigma_d} 
\\[0.3em]
\text{ with } x_{\sigma_i} < x_{\sigma_{i+1}} \text{ when } i \in \Des(\sigma) 
\\[0.3em]
\end{Bmatrix} \\
=& \
\begin{Bmatrix}
\x \in \R^d : \, 0 \le x_{\sigma_1} \le x_{\sigma_2} \le \dots \le x_{\sigma_d} 
\le 1 \\[0.3em]
\text{ with } x_{\sigma_i} < x_{\sigma_{i+1}} \text{ when } i \in 
\Des_{j}(\sigma) \\[0.3em]
\text{ and } x_{\sigma_d} < 1 \text{ when } d \in \Des_{j}(\sigma) \\[0.3em]
\end{Bmatrix} \, .
\end{align*}
The closure of $\triangle_{\sigma}^{d, j}$ is a unimodular simplex for all $j$ 
and $\sigma$. Each strict inequality corresponds bijectively to a missing facet 
of the simplex. Therefore, the half-open unimodular 
simplex $\triangle_{\sigma}^{d, j}$ has exactly $\des_j(\sigma)$ missing facets.
Furthermore,
\[
C^d_j \ = \ \biguplus_{\sigma \in S_d} \triangle_{\sigma}^{d, j} \, 
\]
is a disjoint union.
Therefore, 
\[
\Ehr \left( C_j^d, t \right)
\ = \ \sum_{\sigma \in S_d} \Ehr \left( \triangle_{\sigma}^{d, j} , t \right) \ 
= \ \frac{ \sum_{\sigma \in S_d} t^{\des_j(\sigma)} }{(1-t)^{d+1}} \ = \ 
\frac{A_{j+1}(d+1, t)}{(1-t)^{d+1}} \, , 
\]
where the last equality holds by Lemma~\ref{lem:jdescents}.
\end{proof}
The $h^\ast$-vector of 
parallelepipeds with respect to arbitrary $\Z^d$-valuations will 
be treated in the following section.


\subsection{Half-open parallelepipeds}\label{sec:halfopenparallelepiped}
In the following let $\varphi$ be a $\mathbb{Z}^d$-valuation, and let 
$v_1,\ldots,v_r \in \mathbb{Z}^d$ be fixed linearly independent vectors.
For each $I\subseteq [r]$ we define the (closed) parallelepiped
\[
\lozenge (I) \ := \ \left\{\sum _{i\in I} \lambda _i v_i : \,  0\leq \lambda _i \leq 
1 \text{ for all } i\in I \right\}
\]
and the relatively open parallelepiped 
\[
\Box (I) \ := \ \left\{\sum _{i\in I} \lambda _i v_i : \,  0< \lambda _i < 1 \text{ 
for all } i\in I\right\}.
\]
We set $b_{\varphi}(I):=\varphi (\Box (I) )$ and observe that if $\varphi$ is 
combinatorially positive then, from Theorem~\ref{thm:combpositive}, we obtain 
$b_{\varphi}(I)\geq 0$ for all $I\subseteq [r]$.

Further, for each $I\subseteq [r]$ we define the half-open 
parallelepipeds
\[
\halflozenge (I) \ := \ \left\{\sum _{i=1}^r \lambda _i v_i : \,  0\leq \lambda _i < 
1 \text{ for all } i\in I,0\leq  \lambda _i \leq 1 \text{ for all } i\not\in 
I\right\}
\]
and
\[
\Pi (I) \ := \ \left\{\sum _{i\in I} \lambda _i v_i : \,  0\leq \lambda _i < 1 \text{ 
for all } i\in I\right\}.
\]
Note that $\lozenge ([r])=\halflozenge (\emptyset)$; we also set $\lozenge 
(\emptyset)=\Pi (\emptyset)=\{0\}$. 
The following lemma of Schepers and Van Langenhoven 
\cite{svl} was originally stated only for discrete volumes.
However, their proof works as well for arbitrary $\mathbb{Z}^d$-valuations.
\begin{lem}[{\cite[Lemma~2.1]{svl}}]\label{lem:Ehrhartfunction}
    Let $\varphi$ be a $\mathbb{Z}^d$-valuation and let $I\subseteq [r]$. Then
    \[
    \varphi (n \, \halflozenge (I)) \ = \ \sum _{J \supseteq I} n^{\left|J\right|} \varphi ( \Pi ({J}) ) \, .
    \]
\end{lem}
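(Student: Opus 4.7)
The plan is to tile the dilated half-open parallelepiped $n\,\halflozenge(I)$ explicitly by integer lattice translates of the relatively-open parallelepipeds $\Pi(J)$ with $J\supseteq I$, and then appeal to $\Z^d$-translation invariance of $\varphi$ together with additivity of $\varphi$ over half-open dissections.

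For the dissection, since $v_1,\dots,v_r$ are linearly independent, every $p\in n\,\halflozenge(I)$ is uniquely of the form $p=\sum_{i=1}^{r}\mu_i v_i$ with $\mu_i\in[0,n)$ for $i\in I$ and $\mu_i\in[0,n]$ for $i\notin I$. Setting $J=J(p):=\{i\in[r]:\mu_i<n\}$, we have $I\subseteq J$; for each $i\in J$ there is a unique pair $(k_i,\lambda_i)\in\{0,\dots,n-1\}\times[0,1)$ with $\mu_i=k_i+\lambda_i$, while $\mu_i=n$ for $i\notin J$. With
\[
t_{J,k} \ := \ \sum_{i\in J}k_i v_i+n\sum_{i\notin J}v_i \ \in \ \Z^d,
\]
this yields the disjoint decomposition
\[
n\,\halflozenge(I) \ = \ \biguplus_{J\supseteq I}\ \biguplus_{k\in\{0,\dots,n-1\}^J}\bigl(t_{J,k}+\Pi(J)\bigr).
\]
Each piece is a lattice translate of $\Pi(J)$, and there are exactly $n^{|J|}$ such translates for each $J$. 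Applying $\varphi$, invoking translation invariance to replace $\varphi(t_{J,k}+\Pi(J))$ by $\varphi(\Pi(J))$, and using additivity of $\varphi$ over the dissection, then produces the claimed formula.

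The main obstacle is justifying the additivity step. Corollary~\ref{cor:ho-val} provides additivity over half-open dissections arising from a single generic light source $q\in\relint(P)$, whereas our dissection simultaneously involves full-dimensional interior half-open pieces and lower-dimensional boundary pieces corresponding to the facets $\mu_i=n$ for $i\notin I$. A clean way to handle this is by induction on $|[r]\setminus I|$: when $I=[r]$ all boundary facets are already removed, and a single generic light source in the direction $\sum_i v_i$ makes the "upper" facet of every unit sub-parallelepiped visible, so Corollary~\ref{cor:ho-val} applies directly and gives the formula in this base case. For the inductive step, peel off one boundary facet $\mu_{i_0}=n$ for some $i_0\in[r]\setminus I$, writing $n\,\halflozenge(I)=n\,\halflozenge(I\cup\{i_0\})\uplus F_{i_0}$, where $F_{i_0}$ is itself a half-open parallelepiped of dimension $r-1$ (generated by $\{v_i:i\neq i_0\}$, translated by $nv_{i_0}$, and half-opened along $I$) to which the induction hypothesis applies; matching contributions index by index then gives the desired sum over $J\supseteq I$.
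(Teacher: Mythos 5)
Your tiling of $n\,\halflozenge(I)$ is exactly the one underlying the paper's argument. The paper states it in two stages---first $\halflozenge(I) = \biguplus_{J\supseteq I}\bigl(\Pi(J) + \sum_{i\notin J}v_i\bigr)$, then $n\,\Pi(J)$ tiled by $n^{|J|}$ lattice translates of $\Pi(J)$---but it is the same decomposition, and you are also right that an additivity principle is being invoked: the paper silently treats $\varphi$ (extended to half-open polytopes via McMullen's inclusion-exclusion) as additive over such half-open tilings, without reducing to Corollary~\ref{cor:ho-val}.

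The difficulty is that your proposed repair does not close the gap you identified. In the base case $I=[r]$, Corollary~\ref{cor:ho-val} requires a generic $q\in\relint(P)$, and for such a $q$ the half-open pieces $\half{q}{\lozenge([r])+k}$ are not all translates of $\Pi([r])$: the unit cube containing $q$ loses no facets, and which facets are removed from the other cubes depends on their position relative to $q$. The light source that would make every piece look like $\Pi([r])$ lies \emph{outside} $n\,\lozenge([r])$, where only the set-theoretic Lemma~\ref{lem:ho} is available, not the valuation identity of Corollary~\ref{cor:ho-val}. Similarly, the inductive peeling step asserts $\varphi(n\,\halflozenge(I)) = \varphi(n\,\halflozenge(I\cup\{i_0\})) + \varphi(F_{i_0})$ without proof, and this is again precisely the half-open additivity that was the concern to begin with. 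The cleanest fix is to unwind the inclusion-exclusion definition of $\varphi$ on half-open polytopes and verify both the peeling identity and the cube-tiling identity directly as cancellations among sums of $\varphi$ of closed faces; this avoids light sources and Corollary~\ref{cor:ho-val} altogether, and it is what the paper's one-line appeal to translation-invariance is implicitly relying on.
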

\begin{proof}
    To keep this paper self contained we give a  proof 
    here (slightly modified from that of~\cite{svl}). 
    As $v_1,\ldots,v_{r}$ are linearly independent, for every $x \in 
    \lozenge ([r]) $ there are unique $\lambda _1 , \ldots, \lambda _{r}\in 
    [0,1]$ such that
    \[
    x \ = \ \sum _{i=1} ^{r} \lambda _i v_i \, .
    \]
    Let $J_x := \{i\in [r] : \,  \lambda _i <1\}$. Then $x\in \Pi (J)+\sum _{i\not \in J}v_i$ 
    if and only if $J_x=J$.
    We observe that $x \in \halflozenge(I)$ if and only if $I \subseteq J_x$,
    and therefore we can  partition
    $$\halflozenge (I) \ = \ \biguplus _{J \supseteq I} \left(\Pi (J)+\sum _{i\not \in J}v_i\right) \, .$$
    Further, for all $J\subseteq [r]$ and all $n\geq 1$ we can tile $n \, \Pi (J)$ 
    with $n^{\left|J\right|}$ translates of $\Pi (J)$. Thus by the translation-invariance of $\varphi$,
    \[
    \varphi ( n \, \halflozenge (I))
    \ = \ \sum_{J \supseteq I} \varphi ( n \, \Pi (J))
    \ = \ \sum_{J \supseteq I} n^{\left|J\right|}\varphi ( \Pi (J)) \, . \qedhere
    \]
\end{proof}
Applying Lemma~\ref{lem:Ehrhartfunction} to the linearly independent standard 
basis vectors $e_1,\ldots, e_d$ and the discrete volume, we obtain the 
following corollary:
\begin{cor}\label{cor:Ehrhartfunctioncube}
    Let $0\leq j\leq d$. Then the Ehrhart polynomial of the half-open unit cube 
    $C_j ^d$ equals
    \[
    \ehr_{C_j ^d}(n)
    \ = \ \sum _{J \supseteq [j]} n^{\left| J \right|}
    \ = \ n^j (1+n)^{d-j},
    \]
    where we set $[0]:=\emptyset$.
\end{cor}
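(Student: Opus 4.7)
The plan is to specialize Lemma~\ref{lem:Ehrhartfunction} to the standard basis of $\mathbb{Z}^d$ and the discrete-volume valuation $\phi = \ehrval$. The key observation is that $C_j^d$ coincides, up to a harmless permutation of coordinates (which preserves the Ehrhart polynomial), with the half-open parallelepiped $\halflozenge([j])$ generated by $v_1 = e_1,\ldots,v_d = e_d$. Indeed, removing the facet $\{x_i = 1\}$ from $[0,1]^d$ is precisely the same as imposing the strict inequality $\lambda_i < 1$ on the $i$-th coefficient in the parametrization $x = \sum_i \lambda_i e_i$, so the definition of $\halflozenge(I)$ with $I = [j]$ matches (up to relabeling) the definition of $C_j^d$.

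Next I would evaluate $\ehrval(\Pi(J))$ for every $J \subseteq [d]$. By definition $\Pi(J) = \{\sum_{i \in J} \lambda_i e_i : 0 \le \lambda_i < 1\}$ is the half-open unit cube in the coordinate subspace indexed by $J$, which contains exactly one lattice point, namely the origin. Hence $\ehrval(\Pi(J)) = 1$ for all $J$.

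Plugging this into Lemma~\ref{lem:Ehrhartfunction} immediately gives
\[
\ehr_{C_j^d}(n) \ = \ \sum_{J \supseteq [j]} n^{|J|} \, \ehrval(\Pi(J)) \ = \ \sum_{J \supseteq [j]} n^{|J|} \, ,
\]
establishing the first equality of the corollary. The closed form then follows by writing each $J \supseteq [j]$ uniquely as $J = [j] \sqcup K$ with $K \subseteq \{j+1,\ldots,d\}$ and applying the binomial theorem:
\[
\sum_{J \supseteq [j]} n^{|J|} \ = \ n^j \sum_{K \subseteq \{j+1,\ldots,d\}} n^{|K|} \ = \ n^j \sum_{k=0}^{d-j}\binom{d-j}{k} n^k \ = \ n^j (1+n)^{d-j} \, .
\]

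There is no real obstacle here; the statement is an immediate specialization of Lemma~\ref{lem:Ehrhartfunction}, and the only content is recognizing $C_j^d$ as a standard half-open parallelepiped and noting that each $\Pi(J)$ contributes a single lattice point.
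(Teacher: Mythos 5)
Your proof is correct and follows the same route the paper takes: specialize Lemma~\ref{lem:Ehrhartfunction} to the standard basis $e_1,\dots,e_d$ with $\phi = \ehrval$, note that $C_j^d$ is (after a coordinate permutation) the half-open parallelepiped $\halflozenge([j])$, observe $\ehrval(\Pi(J)) = 1$, and finish with the binomial theorem. The paper merely compresses all of this into one sentence, so your write-up just supplies the details the authors left implicit.
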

Corollary~\ref{cor:Ehrhartfunctioncube} together with Theorems~\ref{thm:jEulerianrealrooted} and~\ref{thm:hvectorunitcube} reproves the following result by Br\"and\'en~\cite{branden2006linear}.
\begin{cor}[\cite{branden2006linear}]\label{cor:branden}
Let $c_0,\ldots,c_d \geq 0$ be real numbers and let $h(t)\in \mathbb{R}[t]$ be the unique polynomial such that
\[
\sum _{n\geq 0}\sum _{j=0}^d c_j \, n^j (1+n)^{d-j} t^n\ = \ \frac{h(t)}{(1-t)^{d+1}} \, .
\]
Then $h(t)$ is real rooted.
\end{cor}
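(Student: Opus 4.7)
The plan is to recognize the left-hand side as a nonnegative linear combination of Ehrhart series of half-open unit cubes, and then invoke the real-rootedness result for sums of $(A,j)$-Eulerian polynomials.

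First I would apply Corollary~\ref{cor:Ehrhartfunctioncube}, which identifies $n^j(1+n)^{d-j}$ as the Ehrhart polynomial $\ehr_{C_j^d}(n)$ of the half-open unit cube with $j$ facets removed. Consequently,
\[
\sum_{n\geq 0}\sum_{j=0}^d c_j \, n^j (1+n)^{d-j} t^n \ = \ \sum_{j=0}^d c_j \, \Ehr(C_j^d,t) \, .
\]
Next I would substitute the closed form from Theorem~\ref{thm:hvectorunitcube}, namely $\Ehr(C_j^d,t)=A_{j+1}(d+1,t)/(1-t)^{d+1}$, which yields
\[
\sum_{n\geq 0}\sum_{j=0}^d c_j \, n^j (1+n)^{d-j} t^n \ = \ \frac{\sum_{j=0}^d c_j \, A_{j+1}(d+1,t)}{(1-t)^{d+1}} \, .
\]
By the uniqueness of the rational representation with denominator $(1-t)^{d+1}$, this forces $h(t)=\sum_{j=0}^d c_j \, A_{j+1}(d+1,t)$.

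Finally, since the coefficients $c_0,\dots,c_d$ are nonnegative, $h(t)$ is a nonnegative linear combination of $(A,j)$-Eulerian polynomials of common parameter $d+1$, so Theorem~\ref{thm:jEulerianrealrooted} applies directly to conclude that $h(t)$ has only real roots. There is no real obstacle here once the dictionary between $n^j(1+n)^{d-j}$ and $C_j^d$ is in place; the whole proof is simply a matter of assembling Corollary~\ref{cor:Ehrhartfunctioncube}, Theorem~\ref{thm:hvectorunitcube} and Theorem~\ref{thm:jEulerianrealrooted} in sequence.
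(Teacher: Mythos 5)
Your proof is correct and follows exactly the route indicated in the paper, which states that Corollary~\ref{cor:Ehrhartfunctioncube}, Theorem~\ref{thm:hvectorunitcube}, and Theorem~\ref{thm:jEulerianrealrooted} combine to reprove Br\"and\'en's result. Your chain of substitutions supplies precisely the missing details the paper leaves to the reader.
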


Recall our notation $b_{\varphi}(I)=\varphi (\Box (I) )$.

\begin{lem}\label{lem:boxpartition}
    Let $\varphi$ be a $\mathbb{Z}^d$-valuation. Then for all $I\subseteq [r]$,
    \[
    \varphi ( \Pi (I)) \ = \ \sum _{J\subseteq I} b_{\varphi}(J) \, .
    \]
\end{lem}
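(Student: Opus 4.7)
The plan is to prove the identity via two successive inclusion-exclusion arguments followed by a clean binomial collapse.

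First, I would recognize $\Pi(I)$ as a half-open polytope in the sense of Section~\ref{sec:valuations}. Writing $F_i := \{x \in \lozenge(I) : \lambda_i = 1\}$ for the ``upper'' facets of $\lozenge(I)$, one has $\Pi(I) = \lozenge(I) \setminus \bigcup_{i \in I} F_i$, and these $|I|$ facets are exactly the ones visible from any generic $q \in \aff(\lozenge(I))$ whose $v_i$-coefficient is taken strictly greater than $1$ for every $i \in I$. Hence $\Pi(I) = \half{q}{\lozenge(I)}$, and by the half-open polytope definition together with $\bigcap_{i \in K} F_i = \lozenge(I \setminus K) + \sum_{i \in K} v_i$ and translation-invariance of~$\varphi$, one obtains
\[
    \varphi(\Pi(I)) \ = \ \sum_{L \subseteq I} (-1)^{|I|-|L|} \, \varphi(\lozenge(L)) \, .
\]

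Second, I would expand each $\varphi(\lozenge(L))$ using the partition of the parallelepiped $\lozenge(L)$ into relative interiors of its faces. Every face of $\lozenge(L)$ has the form $\lozenge(K) + \sum_{i \in S} v_i$ for a uniquely determined pair of disjoint sets $K,S \subseteq L$, and its relative interior is a translate of $\Box(K)$, contributing $b_\varphi(K)$ by translation-invariance. Counting the $2^{|L|-|K|}$ admissible choices of $S$ for each $K \subseteq L$ yields
\[
    \varphi(\lozenge(L)) \ = \ \sum_{K \subseteq L} 2^{|L|-|K|} \, b_\varphi(K) \, .
\]

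Substituting this into the previous formula and swapping the order of summation, the inner sum collapses by the binomial theorem:
\[
    \varphi(\Pi(I)) \ = \ \sum_{K \subseteq I} b_\varphi(K) \sum_{L \,:\, K \subseteq L \subseteq I} (-1)^{|I|-|L|} \, 2^{|L|-|K|} \ = \ \sum_{K \subseteq I} b_\varphi(K) \, (2-1)^{|I|-|K|} \ = \ \sum_{K \subseteq I} b_\varphi(K) \, ,
\]
which is the desired identity. The only step requiring genuine care is the first one, where one must verify that the $|I|$ upper facets of $\lozenge(I)$ are simultaneously visible from a common generic point so that the half-open formalism of Section~\ref{sec:valuations} applies; the product structure of the parallelepiped makes this routine. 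Everything else is combinatorial bookkeeping, with the collapse $(2-1)^{|I|-|K|} = 1$ providing the key cancellation.
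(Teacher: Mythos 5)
Your proof is correct, and it takes a genuinely different route from the paper's. The paper argues directly: for $x \in \Pi(I)$ with unique coordinates $\lambda_i \in [0,1)$, one sorts by the set $J_x = \{i \in I : \lambda_i = 0\}$ to obtain the honest partition $\Pi(I) = \biguplus_{K \subseteq I} \Box(K)$, and reads off the identity immediately. You instead unwind the half-open polytope definition of $\varphi(\Pi(I))$ as an inclusion-exclusion over the removed upper facets, then expand each resulting closed face $\lozenge(L)$ over \emph{its} face-interior decomposition, and finally collapse via $\sum_{K \subseteq L \subseteq I} (-1)^{|I|-|L|} 2^{|L|-|K|} = (2-1)^{|I|-|K|} = 1$. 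Both arguments are sound; the binomial collapse is a nice sanity check on the whole setup. The cost of your route is the double inclusion-exclusion and the need to verify that a generic $q$ with all $v_i$-coefficients exceeding $1$ sees exactly the upper facets (which, as you say, is routine for a parallelepiped). What it buys you is that it makes no appeal to a hands-on partition and instead composes two general facts: $\varphi$ of a half-open polytope is the alternating sum over its removed facets, and $\varphi$ of any closed polytope is the sum of $\varphi$ over the relative interiors of its faces. The paper's one-line partition is essentially the second of these facts restricted to the faces of $\lozenge(I)$ that $\Pi(I)$ actually contains, so the two proofs are doing the same cancellation, just with the bookkeeping made explicit in your version and implicit in theirs.
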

\begin{proof}
    For $x\in \Pi (I)$ there are unique $\lambda _i\in [0,1)$ such that
    \[
    x \ = \ \sum _{i\in I} \lambda _i v_i \, .
    \]
    Let $J_x := \{ i \in I : \,  \lambda _i =0 \}\subseteq I$. For all 
    $J\subseteq I$ we have $J=J_x$ if and only if $x\in \Box (I\setminus J)$. Therefore 
    \[
    \Pi (I) \ = \ \biguplus _{J\subseteq I}\Box (I\setminus J) ,
    \]
    and the result follows by the translation-invariance of $\varphi$.
\end{proof}
The following theorem generalizes \cite[Proposition~2.2]{svl}.
\begin{thm}\label{thm:halfopenparallelepipeds}
    Let $\varphi$ be a $\mathbb{Z}^d$-valuation. Then for all $I\subseteq [r]$,
    \[
    \varphi (\halflozenge (I),t) \ = \ \frac{\sum _{K\subseteq [r]} 
    b_{\varphi}(K) A _{\left|I\cup K\right|+1}(r+1,t)}{(1-t)^{r+1}} \, .
    \]
\end{thm}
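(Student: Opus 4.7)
The plan is to expand the left-hand side using the two previous lemmas, swap the order of summation, and recognize the inner sum as the Ehrhart series of a half-open unit cube so that Theorem~\ref{thm:hvectorunitcube} can be applied.

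First I would apply Lemma~\ref{lem:Ehrhartfunction} to obtain
\[
\varphi(n \halflozenge(I)) \ = \ \sum_{J \supseteq I} n^{|J|} \, \varphi(\Pi(J)) \, .
\]
Then Lemma~\ref{lem:boxpartition} lets me replace $\varphi(\Pi(J))$ by $\sum_{K \subseteq J} b_\varphi(K)$. Interchanging the two sums gives
\[
\varphi(n \halflozenge(I)) \ = \ \sum_{K \subseteq [r]} b_\varphi(K) \sum_{\substack{J \supseteq I \\ J \supseteq K}} n^{|J|} \ = \ \sum_{K \subseteq [r]} b_\varphi(K) \sum_{J \supseteq I \cup K} n^{|J|} \, .
\]

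Next I would observe that the inner sum depends only on $|I \cup K|$: by relabeling coordinates it is exactly the right-hand side of Corollary~\ref{cor:Ehrhartfunctioncube}, so it equals $\ehr_{C^r_{|I \cup K|}}(n)$. Passing to generating functions and invoking Theorem~\ref{thm:hvectorunitcube} then yields
\[
\sum_{n \ge 0} \Bigg( \sum_{J \supseteq I \cup K} n^{|J|} \Bigg) t^n \ = \ \Ehr \big(C^r_{|I \cup K|}, t\big) \ = \ \frac{A_{|I \cup K|+1}(r+1, t)}{(1-t)^{r+1}} \, .
\]
Multiplying by $b_\varphi(K)$ and summing over $K \subseteq [r]$ produces the claimed expression.

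The argument is essentially bookkeeping: the only step that required genuine geometric input was already isolated in Theorem~\ref{thm:hvectorunitcube}. The main place to be careful is in the interchange of summations and in identifying $\sum_{J \supseteq I \cup K, J \subseteq [r]} n^{|J|}$ with the Ehrhart polynomial of $C^r_{|I \cup K|}$ (this uses that Corollary~\ref{cor:Ehrhartfunctioncube} is invariant under permuting which $|I \cup K|$ coordinates are "opened," which is immediate from its derivation). No further obstacles should appear.
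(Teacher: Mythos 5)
Your proof is correct and follows essentially the same route as the paper's: expand via Lemma~\ref{lem:Ehrhartfunction}, substitute using Lemma~\ref{lem:boxpartition}, swap summation order, identify the inner sum with $\ehr_{C^r_{|I\cup K|}}(n)$ via Corollary~\ref{cor:Ehrhartfunctioncube}, and conclude with Theorem~\ref{thm:hvectorunitcube}. If anything, your version is slightly cleaner than the published one, which mislabels the final citation (it writes Corollary~\ref{cor:Ehrhartfunctioncube} where Theorem~\ref{thm:hvectorunitcube} is the result actually needed to convert $\ehr_{C^r_{|I\cup K|}}(n)$ into $A_{|I\cup K|+1}(r+1,t)/(1-t)^{r+1}$).
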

\begin{proof}
    We follow the line of argumentation in \cite[Proposition~2.2]{svl}.  By Lemmas~\ref{lem:Ehrhartfunction} and~\ref{lem:boxpartition},
    \begin{align*}
        \varphi(\halflozenge (I),t)
        \ &= \ \sum_{n=0}^{\infty} t^n\sum_{J \supseteq I} n^{\left| J\right|} \, \varphi ( \Pi (J))\\
          &= \ \sum_{n=0}^{\infty}t^n \sum_{J \supseteq I} n^{\left| J\right|}\sum _{K \subseteq J} b_{\varphi}(K)\\
          &= \ \sum_{K\subseteq [r]}b_{\varphi}(K)\sum _{n=0}^{\infty}t^n\sum_{J \supseteq I\cup K} n^{\left| J\right|}.
    \end{align*}
    By Corollary~\ref{cor:Ehrhartfunctioncube},
    \[
    \sum_{J \supseteq I\cup K} n^{\left| J\right|} \ = \ \ehr_{C_{\left|I \cup K\right|}^{r}}(n) \, .
    \]
    The claim now follows from Corollary~\ref{cor:Ehrhartfunctioncube}.
\end{proof}
As a corollary we obtain unimodality of the $h^{\ast}$-vectors of half-open 
parallelepipeds in the case that $\phi$ is combinatorially positive.
\begin{cor}\label{cor:unimodhalfopen}
    Let $\varphi$ be a combinatorially positive $\mathbb{Z}^d$-valuation and 
    let 
    $I\subseteq [r]$. Then $h^{\varphi}(\halflozenge 
    (I))(t)$ has only real roots. Moreover, if $h^{\varphi}(\halflozenge 
    (I))=(h_{0},\ldots,h_{r},0,\ldots,0)$ is 
    the $h^{\ast}$-vector of $\halflozenge (I)$, then 
    \[
    h_{0}\leq \dots \leq h _{\frac{r}{2}} \geq 
    \dots \geq h _{r} \quad \text{ if } r \text{ is even}
    \]
    and
    \[
    h _{0}\leq \dots \leq h _{\frac{r-1}{2}} 
    \quad \text{ and } \quad h _{\frac{r+1}{2}} \geq \dots \geq 
    h _{r} \quad \text{ if } r \text{ is odd.}
    \]
\end{cor}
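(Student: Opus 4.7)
The plan is to read Theorem~\ref{thm:halfopenparallelepipeds} through the lens of Theorems~\ref{thm:jEulerianrealrooted} and~\ref{thm:jEulerian_unimodality}. First I would rewrite
\[
h^{\varphi}(\halflozenge(I))(t) \ = \ \sum_{K\subseteq [r]} b_{\varphi}(K) \, A_{|I\cup K|+1}(r+1,t) \ = \ \sum_{j=1}^{r+1} c_j \, A_j(r+1,t),
\]
where $c_j = \sum_{K : |I\cup K|+1 = j} b_{\varphi}(K)$. Combinatorial positivity of $\varphi$ and Theorem~\ref{thm:combpositive} (together with the remark that $\phi(\relint P) \ge 0$ for all lattice polytopes) guarantees $b_{\varphi}(K) \geq 0$, so every $c_j \geq 0$.

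Real-rootedness of $h^{\varphi}(\halflozenge(I))(t)$ is then immediate from Theorem~\ref{thm:jEulerianrealrooted}. Because a real-rooted polynomial with nonnegative coefficients has a log-concave coefficient sequence with no internal zeros (Newton's inequalities), its coefficients are automatically unimodal. So at this point the only remaining content is pinning down the peak location.

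For the peak, I would apply Theorem~\ref{thm:jEulerian_unimodality} with $d$ replaced by $r+1$ to each summand $A_j(r+1,t)$. In every case covered by that theorem, the peak of $A_j(r+1,t)$ lies in $\{\lfloor r/2 \rfloor, \lceil r/2 \rceil\}$. Writing $A_j(r+1,t) = \sum_k a_j(r+1,k)\, t^k$, this yields the uniform inequalities
\[
a_j(r+1, i) \ \leq \ a_j(r+1, i+1) \quad \text{for } i < \lfloor r/2 \rfloor,
\]
\[
a_j(r+1, i) \ \geq \ a_j(r+1, i+1) \quad \text{for } i \geq \lceil r/2 \rceil,
\]
valid for every $j$ in the range $1 \leq j \leq r+1$. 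Taking a $c_j$-weighted sum preserves both inequalities for $(h_0,\dots,h_r)$. Together with the unimodality already established, this forces the peak at $r/2$ when $r$ is even and at one of $(r-1)/2, (r+1)/2$ when $r$ is odd, matching the corollary.

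The only subtle point is the bookkeeping at the boundary cases $j=1$ and $j=r+1$ when $r+1$ is odd, where Theorem~\ref{thm:jEulerian_unimodality} gives two tied peak positions rather than a single one. I would handle this by noting that the tied equalities $a_1(r+1, r/2 - 1) = a_1(r+1, r/2)$ and $a_{r+1}(r+1, r/2) = a_{r+1}(r+1, r/2+1)$ are still compatible with the weak inequalities above, so the conclusion is unaffected. Beyond this, the proof is essentially an assembly of the three earlier results, with the main work having been done in Theorem~\ref{thm:halfopenparallelepipeds} and Theorem~\ref{thm:jEulerian_unimodality}.
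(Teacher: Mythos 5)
Your argument is correct and is essentially the proof the paper gives: rewrite $h^{\varphi}(\halflozenge(I))(t)$ via Theorem~\ref{thm:halfopenparallelepipeds} as a nonnegative combination of $(A,j)$-Eulerian polynomials (nonnegativity of the coefficients coming from combinatorial positivity and Theorem~\ref{thm:combpositive}), invoke Theorem~\ref{thm:jEulerianrealrooted} for real-rootedness, and invoke Theorem~\ref{thm:jEulerian_unimodality} for the peak location. The detour through Newton's inequalities is harmless but unnecessary, since the two uniform inequality chains you extract from Theorem~\ref{thm:jEulerian_unimodality} already \emph{are} the statement of the corollary after taking the nonnegative weighted sum; you never need a separate unimodality step.
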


\begin{proof}
    By Theorem~\ref{thm:halfopenparallelepipeds},
    \[
    h^{\phi}(\halflozenge (I))(t) \ = \ \sum _{K\subseteq [r]} b_{\varphi}(K) 
    A _{\left|I\cup K\right|+1}(r+1,t) \, .
    \]
    As $\varphi$ is combinatorially positive, $b_{\varphi}(K)\geq 0$ for 
    all $K\subseteq [r]$ by Theorem~\ref{thm:combpositive}. By Theorem~\ref{thm:jEulerianrealrooted} 
    $h^{\phi}(\halflozenge (I))(t)$ is real-rooted. Moreover, by Theorem~\ref{thm:jEulerian_unimodality}, the coefficients of 
    $A _{\left|I\cup K\right|+1}(r+1,t)$ form a unimodal sequence with peak at 
    $\lfloor\frac{r+1}{2}\rfloor =\frac{r}{2}$ if $r$ is even, and peak at 
    $\frac{r-1}{2}$ or $\frac{r+1}{2}$ if $r$ is odd, and so 
    does any nonnegative linear combination.
\end{proof}


\subsection{Zonotopes}\label{sec:zonotopes}
    The following well-known result is due to Shephard \cite{shephardzonotopes}.
    \begin{thm}[{\cite[Theorem 54]{shephardzonotopes}}]\label{thm:Shephard}
        Every (lattice) zonotope has a subdivision into (lattice) 
        parallelepipeds.
    \end{thm}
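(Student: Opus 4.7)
The plan is to construct the subdivision via a generic (regular) lifting. Starting from a lattice zonotope $Z$ generated by $V = \{v_1, \ldots, v_n\} \subset \Z^d$, I would choose integer heights $h_1, \ldots, h_n$ that are sufficiently generic---for instance, $h_i := M^i$ for a large enough integer $M$---and form the lifted generators $\widetilde{v}_i := (v_i, h_i) \in \Z^{d+1}$. The corresponding lifted lattice zonotope $\widetilde{Z} := \sum_{i=1}^n [0, \widetilde{v}_i]$ lives in $\R^{d+1}$ and projects under $\pi \colon \R^{d+1} \to \R^d$ onto $Z$.

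First, I would observe that the vertical fiber $\pi^{-1}(x) \cap \widetilde{Z}$ over any $x \in Z$ is a closed line segment (by convexity of $\widetilde{Z}$ and surjectivity of $\pi$ onto $Z$), so the \emph{upper envelope} of $\widetilde{Z}$---the union of facets whose outer normal has positive last coordinate---projects homeomorphically onto $Z$. The images of the upper facets then form a polyhedral subdivision of $Z$ via $\pi$.

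Next, I would invoke the standard description of faces of a zonotope: each facet $F$ of $\widetilde{Z}$ is a translate of the zonotope generated by the sub-collection $V_F \subseteq V$ of those generators whose segments are parallel to $F$. The genericity of the heights ensures that no $d+1$ of the lifted vectors lie in a common linear hyperplane of $\R^{d+1}$; hence $|V_F| \leq d$, and since $F$ is a facet of the full-dimensional zonotope $\widetilde{Z}$, in fact $|V_F| = d$ with these vectors linearly independent. Thus every upper facet is a lattice parallelepiped, and projecting via $\pi$ yields the lattice parallelepiped generated by the corresponding basis of $V$, translated by an integer combination of the $v_i$.

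The main obstacle I anticipate is justifying the two structural ingredients above. The facet-structure claim follows from the normal-fan description of $\widetilde{Z}$ as a refinement of the hyperplane arrangement dual to the $\widetilde{v}_i$, which is standard; the genericity condition amounts to finitely many determinantal equations (one for each $(d+1)$-subset of the $\widetilde{v}_i$) cutting out a proper algebraic subset of $\R^n$, easily avoided inside $\Z^n$ by a choice like $h_i = M^i$ with $M$ large. An alternative strategy is an induction on $|V| - d$, peeling off one generator at a time, but the case when the new generator lies in the linear span of the previous ones is cumbersome to handle directly; the lifting approach sidesteps this precisely by moving one dimension up, where linear independence with the vertical direction is automatic.
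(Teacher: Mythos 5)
The paper does not include a proof of this theorem; it is cited from Shephard. So there is no paper argument for me to compare against, and I will assess your proposal on its own. Your route via a generic lifting and projection of the upper envelope is a standard and valid way to obtain the decomposition, and the construction does yield the claimed subdivision into lattice parallelepipeds. However, as written there is a genuine gap in the genericity step, and the closing paragraph claiming the genericity condition ``amounts to finitely many determinantal equations cutting out a proper algebraic subset of $\R^n$'' is simply false in general.

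The problem: you require that no $d+1$ of the lifted vectors $\widetilde{v}_i = (v_i, h_i)$ lie in a common linear hyperplane of $\R^{d+1}$. Expanding the relevant $(d+1)\times(d+1)$ determinant along the height row gives a linear form in the $h_{i_j}$ whose coefficients are the $d\times d$ minors of the matrix with columns $v_{i_1}, \dots, v_{i_{d+1}}$. If those $d+1$ original vectors have rank at most $d-1$ (e.g.\ $d=2$ and $\{(1,0), (2,0), (3,0)\}$ inside $V = \{(1,0),(2,0),(3,0),(0,1)\}$), then \emph{all} those minors vanish, the determinant is identically zero, and the lifted vectors lie in a common hyperplane for \emph{every} choice of heights. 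Your genericity condition is thus unattainable for many perfectly legitimate zonotope generating sets, and the choice $h_i = M^i$ cannot rescue it.

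The fix is to weaken the requirement to exactly what the argument needs. Only $(d+1)$-subsets of $V$ with rank exactly $d$ need to be made non-coplanar after lifting; for these, at least one $d\times d$ minor is nonzero, the determinant is a nontrivial linear form in the heights, and generic (or Vandermonde-type) heights avoid all finitely many such conditions simultaneously. The remaining, unavoidable coplanarities occur precisely when the chosen $v_{i_j}$ span a subspace $L$ of dimension at most $d-1$, so the lifts all lie in the \emph{vertical} hyperplane $L\times\R$. Any facet of $\widetilde{Z}$ supported by such a hyperplane has outer normal with last coordinate $0$, hence is neither an upper nor a lower facet and does not appear in the projected subdivision. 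Once you restrict attention to upper facets, whose supporting hyperplanes are non-vertical, the corrected genericity gives $|V_F| = d$ exactly, each such facet is a lattice parallelepiped, and the rest of your argument---the fiber-is-a-segment observation, the homeomorphic projection of the upper envelope, the integrality of the vertices---goes through as stated.
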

    \begin{prop}\label{lem:partition}
        Let $\zon$ be an $d$-dimensional zonotope. Then 
        $\zon$ can be partitioned into $d$-dimensional half-open 
        parallelepipeds in the sense of Section 
        \ref{sec:halfopenparallelepiped}.
    \end{prop}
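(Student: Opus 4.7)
The plan is to combine Shephard's theorem with the half-open decomposition machinery from Lemma~\ref{lem:ho}. I would first apply Theorem~\ref{thm:Shephard} to obtain a subdivision $Z = P_1 \cup \cdots \cup P_k$ of the zonotope into $d$-dimensional (closed) parallelepipeds. Next, I would choose a point $q \in \relint Z$ that is generic in the sense that $q$ does not lie on any of the finitely many affine hyperplanes supporting a facet of some $P_i$. Since $q \in \aff(Z) = \R^d$ and $q \in Z$ forces $\half{q}{Z} = Z$, Lemma~\ref{lem:ho} yields
\[
Z \ = \ \half{q}{P_1} \uplus \cdots \uplus \half{q}{P_k} \, ,
\]
which is a disjoint union of half-open polytopes covering all of $Z$.

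What remains is to verify that each piece $\half{q}{P_i}$ is a half-open parallelepiped in the precise sense of Section~\ref{sec:halfopenparallelepiped}. Write $P_i = x_0 + \lozenge(\{v_1, \ldots, v_d\})$ for linearly independent lattice generators $v_1, \ldots, v_d$. Its facets form $d$ parallel pairs $(F_j^0, F_j^1)$ indexed by $j \in [d]$, corresponding respectively to $\lambda_j = 0$ and $\lambda_j = 1$ in the parameterization. Because $P_i$ lies on one side of each supporting hyperplane, at most one facet of each pair can be visible from $q$, and by genericity either exactly one is visible (when $q$ lies strictly outside the strip between the two parallel hyperplanes) or none is visible (when $q$ lies strictly inside the strip).

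By replacing $v_j$ with $-v_j$ and shifting the base point $x_0$ accordingly whenever the visible facet of the $j$-th pair is $F_j^0$ instead of $F_j^1$, we may assume that every visible facet has the form $F_j^1$. Letting $I_i \subseteq [d]$ collect the indices $j$ for which a facet of $P_i$ is visible from $q$, we obtain $\half{q}{P_i} = x_0' + \halflozenge(I_i)$ for a suitable translate $x_0'$, which is precisely the kind of object described in Section~\ref{sec:halfopenparallelepiped}.

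The main obstacle is the bookkeeping in this last step: justifying that the orientations of the generators can be flipped consistently so that the removed facets are exactly the $\lambda_j = 1$ facets for $j \in I_i$, rather than an arbitrary mix of upper and lower facets. This is easy in the end because the two parallel facets of each pair can be swapped independently by a sign flip of the corresponding generator and a compensating translation, but it is the step that actually pins the output of the dissection down to the class $\halflozenge(I)$ rather than a slightly larger class of half-open polytopes.
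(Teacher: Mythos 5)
Your proposal is correct and takes essentially the same route as the paper: apply Theorem~\ref{thm:Shephard} to dissect the zonotope into parallelepipeds, use Lemma~\ref{lem:ho} with a generic light source $q \in \relint Z$ to make the dissection half-open, and then argue that each $\half{q}{P_i}$ is a translate of some $\halflozenge(I)$. The only cosmetic difference is in the last step: the paper observes there is at least one vertex $w$ of $P_i$ not visible from $q$ and takes the generators to be the edge vectors from $w$ to its neighbors, so the visible facets are automatically the ``far'' ones; you instead start from an arbitrary parameterization $P_i = x_0 + \lozenge(\{v_1,\ldots,v_d\})$ and normalize by flipping the sign of $v_j$ (with a compensating shift of the base point) whenever the visible facet of the $j$-th pair is the near one. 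Both are valid ways to realize $\half{q}{P_i}$ as a translate of $\halflozenge(I)$, and the fact you highlight as the ``main obstacle'' --- that at most one facet of each parallel pair is visible, so the removed facets can be coherently relabelled as $\lambda_j=1$ facets --- is exactly the observation the paper relies on as well.
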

    \begin{proof}
                    Let $\zon=P_1\cup \cdots \cup P_k$ be a dissection of $\zon$ 
                into parallelepipeds, which exists by Theorem~                \ref{thm:Shephard}, and let $q\in \zon$ be a generic point. 
                Then, by Lemma \ref{lem:ho}, $\zon =  \half{q}{P_1} \uplus 
                \cdots \uplus \half{q}{P_k}$ is a partition into half-open 
                polytopes. Moreover, every $\half{q}{P_i}$ equals, up to 
                translation, some $\halflozenge (I)$: Let $F_1,\ldots, F_m$ be 
                the facets of $P_i$ visible from $q$. At most one of two 
                parallel facets can be visible; in particular, $0\leq 
                m\leq d$, and there is at least one vertex of $P_i$ that is not visible from $q$. Let $w$ be such a vertex. As 
                $P_i$ is a simple polytope, the vertex figure at $w$ is a 
                simplex, i.e., every facet 
                containing $w$ is uniquely determined by the neighbor vertex of 
                $w$ it 
                does not contain. For $1\leq i \leq m$, let $w_i$ be the 
                neighbor of $w$ 
                that is not contained in the facet parallel to $F_i$ and let $w_{m+1},\ldots , w_d$ 
                be the 
                other neighbors of $w$. Let $v_i:=w_i-w$ for all $1\leq i \leq 
                d$. Then with $I=[m]$ we obtain
                \[
                \half{q}{P_i} \ = \ P_i \setminus \bigcup _{i=1}^m F_i \ = \ 
                \halflozenge (I) +w \, . \qedhere
                \]
    \end{proof}
    As a consequence we deduce one of our main theorems.
    \begin{proof}[Proof of Theorem~\ref{thm:unimodalzonotope}]
        By Proposition~\ref{lem:partition} there is a partition $\zon =  
        \half{q}{P_1} \uplus 
        \cdots \uplus \half{q}{P_k}$ into half-open zonotopes, where each 
        $\half{q}{P_i}$ 
        equals, up to translation, some $\halflozenge (I)$. By 
        Corollary~\ref{cor:unimodhalfopen} and 
        Theorem~\ref{thm:jEulerianrealrooted},
        $h^{\phi} (\zon)(t)=\sum_{i}h^{\phi} (\half{q}{P_i})(t)$ is 
        real-rooted. Moreover, every 
        $\half{q}{P_i}$ has a unimodal $h^{\ast}$-vector with 
        peak at $ \frac{d}{2}$ if $d$ is even and peak at $\frac{d-1}{2}$ or at 
        $\frac{d+1}{2}$ if $d$ is odd. The same is true for $h^{\phi}  (\zon)$ 
        since it is a nonnegative linear combination of 
        $h^{\phi}(\half{q}{P_1}),\ldots,h^{\phi} (\half{q}{P_k})$.
        \begin{figure}[h]
            \centering
            \includegraphics[width=10cm]{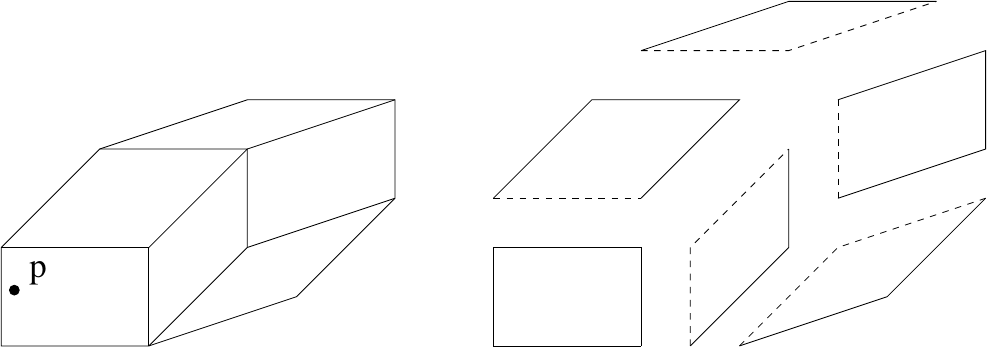}
            \caption{A zonotope and one of its half-open decomposition into 
            parallelepipeds.}
        \end{figure}
    \end{proof}	


\subsection{Ehrhart $h^\ast$-vectors of zonotopes}
Theorem~\ref{thm:halfopenparallelepipeds} allows us to explicitly describe the convex hull of all Ehrhart $h^\ast$-vectors of zonotopes. Let
\[
\mathcal{Z}_d \ := \ \conv \left\{ h^\ehrval (P)  : \,  P \text{ $d$-dimensional 
lattice zonotope} \right\} 
\]
and 
\[
\mathcal{P}_d \ := \ \conv \left\{ h^\ehrval (P)  : \,  P \text{ $d$-dimensional 
lattice parallelepiped} \right\} .
\]
For all $1\leq j \leq d$ set $A_j(d) = (A_j(d,0),A_j(d,1),\ldots, A_j(d,d))$.
Theorem~\ref{thm:hconvexhull} says that
\[
\mathcal{Z}_d \ = \ \mathcal{P}_d \ = \ A_1(d+1) + \mathbb{R}_{\geq 0} \, A_2 
(d+1) + \cdots + \mathbb{R}_{\geq 0} \, A_{d+1} (d+1) \, ,
\]
and that this cone is simplicial.

\begin{proof}[Proof of Theorem~\ref{thm:hconvexhull}]
From the proof of Theorem~\ref{thm:unimodalzonotope} we see that the 
$h^\ast$-polynomial of every zonotope is a nonnegative linear combination of 
$(A,j)$-Eulerian
polynomials. Moreover, since in every half-open decomposition used in the 
proof, there is at most one closed parallelepiped, we see from 
Theorem~\ref{thm:halfopenparallelepipeds} that the multiplicity of $A_1(d+1)$ 
equals $b(\emptyset )=1$. Therefore, $\mathcal{Z}_d$ and $\mathcal{P}_d$ are 
contained in $A_1(d+1) 
+ \mathbb{R}_{\geq 0} \, A_2 
(d+1) + \cdots + \mathbb{R}_{\geq 0} \, A_{d+1} (d+1)$.

For the reverse inclusions, since $\mathcal{P}_d \subseteq \mathcal{Z}_d$, it 
suffices to prove 
that for every integer $m$ and every $2\leq k 
\leq d+1$ there is a parallelepiped $P_{k,m}$ with $h^\ehrval 
(P_{k,m})=A_1(d+1)+m \, A_k(d+1)$. Consider the parallelepiped
\[
  P_{k,m} \ = \ \left\{ \sum _{i=1}^d \lambda _i \, v_i  : \,  0\leq \lambda _i \leq 1 \right\}
\]
with $v_k=e_1+\dots +e_{k-1}+(m+1)e_k$ and $v_i=e_i$ for all other $i$. From 
the proof of \cite[Theorem 2.2]{stanley} it follows that $|\Pi 
(I)\cap \mathbb{Z}^d|$ equals the absolute value of the maximal minor of the 
matrix with columns $\{v_i\}_{i\in I}$. It is therefore not hard to calculate 
that $|\Pi (I)\cap \mathbb{Z}^d|=m+1$ if and only if $[k]\subseteq I$, and $|\Pi 
(I)\cap \mathbb{Z}^d|=1$ otherwise. Thus, by Lemma~\ref{lem:boxpartition} and 
M\"obius inversion,
\[
b_\ehrval (I) \ = \ \begin{cases}
m & \text{ if } I=[k],\\
1     & \text{ if } I=\emptyset,\\
0     & \text{otherwise.}
\end{cases}
\]
Theorem~\ref{thm:halfopenparallelepipeds} now gives 
$P_{k,m}=A_1(d+1)+m \, A_k(d+1)$. 

Moreover, the vectors $A_1(d+1),\ldots, 
A_{d+1}(d+1)$ are linearly independent:
by Corollary~\ref{cor:Ehrhartfunctioncube}, $\ehr_{C_j ^d}(n) 
= n^j(1+n)^{d-j}$, and the polynomials 
$\{n^j(1+n)^{d-j}\}_{j=0,\ldots,d}$ form a basis of the space of polynomials of 
degree at most $d$. Considering instead the $h^\ast$-polynomial defines a basis 
transformation, and $h^{\ehrval}(C_j ^d)(t)=A_{j+1}(d+1,t)$ by Theorem 
\ref{thm:hvectorunitcube}, so the polynomials $\{A_{j+1}(d+1,t)\}_{j\in [d]}$ 
also define a basis.
\end{proof}

 Theorem \ref{thm:hconvexhull} naturally gives rise to the following open problem.
 \begin{problem}\label{prob:1}
 Characterize the sets of all $h^\ast$-vectors of $d$-dimensional parallelepipeds/zonotopes.
 \end{problem}
 
 We suspect that this problem is quite nontrivial.
 From our proof of Theorem~\ref{thm:unimodalzonotope} we see that every such 
 $h^\ast$-vector is contained in $A_1(d+1) + \mathbb{Z}_{\geq 
 0} \, A_2 (d+1) + \cdots + \mathbb{Z}_{\geq 0} \, A_{d+1} (d+1)$. However, it 
 is easy to check that already for $d=2$,
 \[
  A_1(d+1) + \mathbb{Z}_{\geq 0} \, A_2 (d+1) + \cdots + \mathbb{Z}_{\geq 0} \, 
  A_{d+1} (d+1) \subsetneq \mathcal{Z}_d\cap \mathbb{Z}^d \, ,
  \]
  and so $\mathcal{Z}_d\cap \mathbb{Z}^d$ cannot be the right answer. But 
  $A_1(d+1) + \mathbb{Z}_{\geq 0} A_2 (d+1) + \cdots + \mathbb{Z}_{\geq 0} 
  A_{d+1} (d+1)$ does not characterize all $h^\ast$-vectors of $d$-zonotopes either, since 
  with Lemma~\ref{lem:jEulerian_symmetry} it is not hard to see that this affine 
  semigroup contains 
  infinitely many symmetric vectors, i.e., vectors of the form 
  $(a_0,a_1,\ldots,a_d)$ with $a_i=a_{d-i}$ for all $0\leq i\leq d$. By a 
  theorem of Hibi \cite{hibi1992note}, the corresponding polytopes are reflexive; 
  however, there are only finitely many reflexive polytopes in each dimension, by a 
  result of Lagarias and Ziegler~\cite{LagariasZiegler}. Although a complete 
  solution of Problem~\ref{prob:1} might be out of reach, it would be 
  instructive to see whether the set of $h^\ast$-vectors exhibits interesting 
  combinatorial or algebraic structures.

From Theorem~\ref{thm:hvectorunitcube} and \cite{hibi1992note} we deduce the following characterization of reflexive polytopes.
\begin{prop}\label{prop:reflexive}
Let $P$ be a $d$-dimensional lattice polytope and let 
\[
\ehr_{P}(n) 
= \sum _{j=0}^dc_jn^j(1+n)^{d-j}
\]
be its Ehrhart polynomial. Then $P$ is reflexive if and only if $c_j=c_{d-j}$.
\end{prop}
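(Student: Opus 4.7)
The plan is to expand $h^\ast_P(t)$ in the basis of refined Eulerian polynomials and exploit their symmetry (Lemma~\ref{lem:jEulerian_symmetry}), then invoke Hibi's theorem characterizing reflexive polytopes via palindromic $h^\ast$-polynomials. By Corollary~\ref{cor:Ehrhartfunctioncube}, $n^j(1+n)^{d-j} = \ehr_{C_j^d}(n)$, and Theorem~\ref{thm:hvectorunitcube} identifies $\Ehr(C_j^d, t) = A_{j+1}(d+1, t)/(1-t)^{d+1}$. Since the map sending a polytope to its $h^\ast$-polynomial is linear on the level of Ehrhart series, the decomposition $\ehr_P(n) = \sum_{j=0}^d c_j\,n^j(1+n)^{d-j}$ translates directly into
\[
h^\ast_P(t) \ = \ \sum_{j=0}^d c_j \, A_{j+1}(d+1,t) \, .
\]
The coefficients $c_j$ are uniquely determined because, as noted in the proof of Theorem~\ref{thm:hconvexhull}, the polynomials $\{A_{j+1}(d+1,t)\}_{j=0}^d$ form a basis of the space of polynomials of degree at most $d$.

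Next, I would apply Lemma~\ref{lem:jEulerian_symmetry} (with $d$ replaced by $d+1$ and $j$ by $j+1$) to obtain $t^d A_{j+1}(d+1, 1/t) = A_{d+1-j}(d+1,t)$. Substituting termwise into the expansion above yields
\[
t^d h^\ast_P(1/t) \ = \ \sum_{j=0}^d c_j \, A_{d+1-j}(d+1,t) \ = \ \sum_{j=0}^d c_{d-j} \, A_{j+1}(d+1,t) \, .
\]
Since $\{A_{j+1}(d+1,t)\}_{j=0}^d$ is a basis, the palindromic identity $h^\ast_P(t) = t^d h^\ast_P(1/t)$ is equivalent to $c_j = c_{d-j}$ for every $j \in \{0, 1, \ldots, d\}$.

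Finally, I would invoke Hibi's theorem~\cite{hibi1992note}: a $d$-dimensional lattice polytope $P$ is (unimodularly equivalent to) a reflexive polytope if and only if its $h^\ast$-polynomial is palindromic, i.e., $h^\ast_P(t) = t^d h^\ast_P(1/t)$. Chaining this with the equivalence established in the previous paragraph gives exactly the desired characterization. No essential obstacle is expected; the only care needed is bookkeeping on the index shifts when passing through Lemma~\ref{lem:jEulerian_symmetry}, and a brief appeal to the linear independence statement already recorded in the proof of Theorem~\ref{thm:hconvexhull}.
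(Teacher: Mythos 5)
Your proof is correct and follows the paper's argument essentially line for line: expand $h^\ast_P(t)$ in the basis $\{A_{j+1}(d+1,t)\}_{j=0}^d$ via Theorem~\ref{thm:hvectorunitcube} and Corollary~\ref{cor:Ehrhartfunctioncube}, apply the symmetry of Lemma~\ref{lem:jEulerian_symmetry} to show the palindromic condition is equivalent to $c_j = c_{d-j}$, and invoke Hibi's theorem. The only difference is that you spell out the index bookkeeping and the appeal to linear independence, which the paper leaves implicit.
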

\begin{proof}
By Theorem \ref{thm:hvectorunitcube} and Corollary  \ref{cor:Ehrhartfunctioncube} the $h^\ast$-polynomial of $P$ equals $\sum _{j=0}^dc_jA_{j+1}(d+1,t)$. By  Lemma~\ref{lem:jEulerian_symmetry} this polynomial is palindromic if and only if $c_j=c_{d-j}$ for all $0\leq j\leq d$. On the other hand, by Hibi's result \cite{hibi1992note} the $h^\ast$-polynomial is palindromic if and only if $P$ is reflexive.
\end{proof}

\section{Alternatingly Increasing $h^\ast$-vectors}\label{sec:altincr}
In \cite{svl} Schepers and Van Langenhoven conjectured that the Ehrhart $h^\ast$-vector of every polytope having the IDP property and an interior lattice point is alternatingly increasing.
\begin{conj}[{\cite{svl}}]
Let $P$ be an $r$-dimensional polytope having the IDP property with a lattice point in its 
relative interior. Then
\[
h^\ehrval _0 (P)\leq h^\ehrval _r (P)\leq h^\ehrval _1 (P)\leq  h^\ehrval _{r-1} 
(P)\cdots \leq h^\ehrval _{\lfloor\frac{r+1}{2}\rfloor (P)} \, .
\]
\end{conj}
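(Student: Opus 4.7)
Since this is Schepers and Van Langenhoven's conjecture, which remains open in general, my plan is to outline how I would attack it in the setting where the tools developed above apply, namely lattice zonotopes. Every lattice zonotope has the IDP property, so for this subclass the content of the conjecture reduces to upgrading the unimodality guaranteed by Theorem \ref{thm:unimodalzonotope} to alternating-increasingness of $h^{\ehrval}(Z)(t)$ under the added assumption that $Z$ has an interior lattice point.

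The plan is to proceed through a half-open decomposition. By Proposition \ref{lem:partition} one can write $Z$ as a disjoint union of translates of half-open parallelepipeds $\halflozenge(I_i)$, and by Theorem \ref{thm:halfopenparallelepipeds} each piece contributes
\[
\sum_{K\subseteq [r]} b_{\ehrval}(K) \, A_{|I_i \cup K|+1}(r+1,t)
\]
to the $h^*$-polynomial, with $b_{\ehrval}(K) \ge 0$. By Lemma \ref{lem:jEulerian_alt_incr}, $A_{j}(r+1,t)$ is alternatingly increasing whenever $j > (r+1)/2$, and alternating-increasingness is preserved under nonnegative linear combinations of coefficient vectors of the same formal length. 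It would therefore suffice to arrange that every nonzero term satisfies $|I_i \cup K|+1 > (r+1)/2$, i.e., $|I_i \cup K| \ge \lceil r/2 \rceil$. A natural approach is to use the interior lattice point itself (or a generic perturbation) as the light source $q$ in Lemma \ref{lem:ho}, with the hope that from a point deep in the interior each parallelepiped of the dissection loses at least half of its facets, so that $|I_i|$ alone already clears the bound.

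The main obstacle is precisely this last combinatorial step: translating the algebraic hypothesis ``$Z$ has an interior lattice point'' into the geometric guarantee that a zonotopal dissection with this visibility property exists. For distinguished subclasses this should be tractable; in the type-$B$ setting, central symmetry provides a canonical interior lattice point and a symmetric zonotopal tiling in which each parallelepiped is cut in a predictable way, while for coloop-free zonotopes the matroidal absence of coloops ensures every facet direction is witnessed by multiple generators and should permit a careful choice of dissection. The full conjecture for arbitrary IDP polytopes with an interior lattice point, however, does not fit cleanly into the zonotopal half-open framework and almost certainly demands techniques beyond those developed here.
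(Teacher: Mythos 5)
The displayed statement is a conjecture of Schepers and Van~Langenhoven which the paper does not prove in general; it only establishes the special cases of type-$B$ and coloop-free zonotopes (Corollary~\ref{cor:typeBparallelepipedalternatinglyincr} and Theorem~\ref{thm:coloopfree}). You correctly recognize this, but the mechanism you sketch for the zonotopal case diverges from the paper's in a way that matters, and as stated it cannot work.

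You propose to pick the light source $q$ deep in the interior so that every half-open piece $\halflozenge(I_i)$ satisfies $|I_i|\geq\lceil r/2\rceil$, which would make each $A_{|I_i\cup K|+1}(r+1,t)$ individually alternatingly increasing by Lemma~\ref{lem:jEulerian_alt_incr}. This fails for a concrete reason: in any half-open decomposition there is exactly one closed piece, the parallelepiped containing $q$, with $I_i=\emptyset$. Taking $K=\emptyset$ in Theorem~\ref{thm:halfopenparallelepipeds}, this piece always contributes $A_1(r+1,t)$, which is the classical Eulerian polynomial of degree $r-1$ and is \emph{not} alternatingly increasing as a degree-$r$ coefficient vector (its top coefficient $h_r$ is $0$ while $h_0=1$, violating $h_0\le h_r$). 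So no amount of positioning $q$ cleverly can push every summand above the threshold; the closed piece is always below it. The device the paper actually relies on is Lemma~\ref{lem:altincr}: $A_i(d+1,t)+A_j(d+1,t)$ is alternatingly increasing as soon as $i+j\geq d+2$, so a low-index term can be rescued by pairing it with a high-index partner. For coloop-free zonotopes the pairing comes from averaging the decomposition of Lemma~\ref{lem:decompstanley} over the forward and reverse orderings of $V$, and coloop-freeness delivers exactly the needed inequality $|\IP(B)|+|\bIP(B)|\geq d$. For type-$B$ zonotopes the factor-$2$ dilation produces $(B,l)$-Eulerian polynomials (Theorem~\ref{thm:Ehrhart_lcs}), and Theorem~\ref{thm:typeBalternatinglyincr} shows these are alternatingly increasing again by a pairing argument via Proposition~\ref{thm:typeB} and Lemma~\ref{lem:altincr}. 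The missing idea in your outline is precisely this symmetrization; a direct visibility argument forcing each piece individually above threshold is a dead end, and it also never actually uses the interior-lattice-point hypothesis, which should be a warning sign since $[0,1]^d$ shows the hypothesis cannot be dropped.
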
\label{conj:alternatinglyincr}
Schepers and Van Langenhoven proved their conjecture for lattice parallelepipeds with an interior lattice point. 
Towards that conjecture, we prove that the $h^\ast$-vector with respect to any 
combinatorially positive valuation of any lattice centrally symmetric  or coloop-free 
zonotope is alternatingly increasing.


\subsection{Type-$B$ zonotopes}\label{sec:typeBzonotopes}
A \textbf{type-$B$ zonotope} (or \textbf{lattice centrally symmetric 
zonotope}) in $\mathbb{R}^d$ is a lattice zonotope that arises as a 
projection of $[-1,1]^m$ for some $m\geq d$. Equivalently, a zonotope 
$\zon$ is lattice centrally symmetric if there are vectors $v_1,\ldots, 
v_m \in \mathbb{Z}^d$ such that
\[
\zon \ = \ \left\{\sum _{i=1}^m \lambda_i v_i  : \,  -1\leq \lambda _i \leq 
1\right\} 
\]
up to translation by an element in $\mathbb{Z}^d$. A central role in 
determining the $h^\ast$-polynomial of lattice centrally symmetric 
zonotopes is played by the type-$B$ Eulerian polynomials.
To that end we define the half-open $\pm1$-cube
$[-1,1]^d_{l}$, where $d \ge 1$ and $0 \leq l \leq d$, by
\begin{align*}
[-1,1]_{l}^d 
 \ &:= \ [-1,1]^d \setminus \{ x_d = 1,  x_ {d-1} =1, \dots,  x_{d+1-l}=1 \} \, 
 ,
\end{align*}
and the {\bf $l$-descent set}, 
$\Des_{l}(\piep) \subseteq \{0, 1, \dots, d \}$, of signed permutation 
$(\piep) \in B_d$ by
\[
\Des _{l} (\piep) \ := \
\begin{cases}
\Des (\piep) \cup \left\{ d \right\}   & \text{ if } d+1-l \le 
\epsilon_{d}\pi_{d} \le d \, , \\
\Des (\piep)  & \text{ otherwise.} \\
\end{cases}
\]
The cardinality of this set is the {\bf (natural) $l$-descent} number of 
$(\piep)$, denoted by
\[
\des_{l}(\piep) \ := \ \left| \Des_{l}(\piep) \right| .
\]

\begin{thm}\label{thm:h_typeB}
\label{thm:Ehrhart_B_ell}
\[
	\Ehr\left( [-1,1]^d_{l}, t \right) \ = \ \frac{B_{l+1}(d+1,t)}{(1-t)^{d+1}} 
	\, .
\]
\end{thm}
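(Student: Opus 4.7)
The plan is to mirror the proof of Theorem~\ref{thm:hvectorunitcube}. I would decompose $[-1,1]^d_l$ into half-open unimodular simplices indexed by signed permutations $(\pi,\epsilon) \in B_d$, so that the simplex attached to $(\pi,\epsilon)$ has exactly $\des_l(\pi,\epsilon)$ missing facets, and then identify the resulting numerator with $B_{l+1}(d+1,t)$ via a signed analog of Lemma~\ref{lem:jdescents}.

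For each $(\pi,\epsilon) \in B_d$, set
\[
\overline{\Delta}_{\pi,\epsilon} \ := \ \left\{ x \in \R^d : 0 \leq \epsilon_1 x_{\pi_1} \leq \epsilon_2 x_{\pi_2} \leq \cdots \leq \epsilon_d x_{\pi_d} \leq 1 \right\}.
\]
Under the unimodular change of coordinates $y_i := \epsilon_i x_{\pi_i}$ this is the image of the standard order simplex $\{0 \leq y_1 \leq \cdots \leq y_d \leq 1\}$, hence a unimodular simplex with vertices $0$ and $\sum_{i=k}^d \epsilon_i e_{\pi_i}$ for $k=1,\dots,d$. The family $\{\overline{\Delta}_{\pi,\epsilon}\}_{(\pi,\epsilon)\in B_d}$ is precisely the unimodular triangulation of $[-1,1]^d$ coming from the type-$B$ Coxeter arrangement $\{x_i=0\} \cup \{x_i = \pm x_j\}$ together with the facet hyperplanes $\{x_i = \pm 1\}$. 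Its $d+1$ facets are naturally indexed by positions $i \in \{0,1,\dots,d\}$: position $0$ is $\{x_{\pi_1}=0\}$, positions $1\leq i\leq d-1$ are the interior hyperplanes $\epsilon_i x_{\pi_i} = \epsilon_{i+1} x_{\pi_{i+1}}$, and position $d$ is the boundary facet $\{x_{\pi_d}=\epsilon_d\}$ of $[-1,1]^d$.

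Define $\Delta^{d,l}_{\pi,\epsilon}$ by imposing strict inequality at exactly the positions in $\Des_l(\pi,\epsilon)$ (using the convention $\epsilon_0 x_{\pi_0} := 0$). The disjointness check reduces to three observations. Two simplices sharing an interior facet at position $i \in \{1,\dots,d-1\}$ differ by the adjacent swap $(\pi_i,\epsilon_i) \leftrightarrow (\pi_{i+1},\epsilon_{i+1})$, and precisely the one with $\epsilon_i \pi_i > \epsilon_{i+1}\pi_{i+1}$ has a descent at $i$, so the shared facet is assigned to the other. Two simplices sharing the hyperplane $\{x_{\pi_1}=0\}$ differ only in $\epsilon_1$, and the descent at position $0$ occurs in exactly the one with $\epsilon_1 = -1$. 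Finally, the boundary facet $\{x_{\pi_d}=\epsilon_d\}$ is one of the removed facets of $[-1,1]^d_l$ iff $\epsilon_d = +1$ and $\pi_d \in \{d+1-l,\dots,d\}$, which is the condition $d \in \Des_l(\pi,\epsilon)$. This shows $\{\Delta^{d,l}_{\pi,\epsilon}\}$ disjointly covers $[-1,1]^d_l$ and each summand has $\des_l(\pi,\epsilon)$ missing facets, yielding
\[
\Ehr([-1,1]^d_l, t) \ = \ \frac{\sum_{(\pi,\epsilon)\in B_d} t^{\des_l(\pi,\epsilon)}}{(1-t)^{d+1}}.
\]

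It remains to show $\sum_{(\pi,\epsilon)\in B_d} t^{\des_l(\pi,\epsilon)} = B_{l+1}(d+1,t)$. For this I would construct a bijection from $B_d$ onto the set of signed permutations in $B_{d+1}$ whose last signed letter is $d+1-l$: given $(\pi,\epsilon)$ with signed values $a_i = \epsilon_i \pi_i$, append $d+1-l$ (positive sign) as the $(d{+}1)$-st signed letter and replace every $a_i$ with $|a_i| \geq d+1-l$ by $a_i + \sgn(a_i)$. Descents at positions $0,1,\dots,d-1$ are preserved by this order-preserving shift, and a descent appears at position $d$ in the image exactly when $d+1-l \leq \epsilon_d \pi_d \leq d$, matching $d \in \Des_l(\pi,\epsilon)$. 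The main obstacle will be the disjointness bookkeeping in the half-open decomposition, especially reconciling the position-$0$ descent (corresponding to an internal hyperplane) with the position-$d$ descent (a boundary facet of $[-1,1]^d_l$); once this is in place, the rest of the argument follows the type-$A$ template almost verbatim.
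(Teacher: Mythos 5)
Your proof follows the same route as the paper's: decompose $[-1,1]^d_l$ into half-open unimodular simplices $\triangle^{d,l}_{(\pi,\epsilon)}$ indexed by $B_d$ with $\des_l(\pi,\epsilon)$ missing facets, sum the Ehrhart series of the pieces, and then identify $\sum_{(\pi,\epsilon)\in B_d} t^{\des_l(\pi,\epsilon)}$ with $B_{l+1}(d+1,t)$ via the signed analog of Lemma~\ref{lem:jdescents} (i.e., Lemma~\ref{lem:bijection_descents}). Your proposal is correct; it is in fact a bit more thorough than the paper, which asserts Lemma~\ref{lem:bijection_descents} without proof, while you supply the explicit order-preserving bijection $B_d \to \{(\pi,\epsilon)\in B_{d+1} : \epsilon_{d+1}\pi_{d+1}=d+1-l\}$ establishing it.
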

To prove this theorem we will use the following lemma which parallels Lemma~\ref{lem:jdescents}.
\begin{lem}
\label{lem:bijection_descents}
Let $d \ge 1$ and $0 \le l \le d$. Then
\[ 
	b_{l+1}(d+1, k) \ = \ \left| \left\{ (\piep) \in B_d : \, \des_{l}(\piep) = k 
	\right\} \right| .
\]
\end{lem}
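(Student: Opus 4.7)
The plan is to construct an explicit bijection
\[
\Phi : B_d \to \bigl\{ (\sigma, \eta) \in B_{d+1} : \eta_{d+1} \sigma_{d+1} = d+1-l \bigr\}
\]
that sends the $l$-descent number $\des_l$ on $B_d$ to the usual descent number $\des$ on $B_{d+1}$, in direct analogy with the relabeling used in the proof of Lemma~\ref{lem:jdescents}. Summing over all $(\pi, \epsilon) \in B_d$ with $\des_l(\pi, \epsilon) = k$ then yields the claimed identity.

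Given $(\pi, \epsilon) \in B_d$, I would define $\Phi(\pi, \epsilon) = (\sigma, \eta) \in B_{d+1}$ by setting $\sigma_{d+1} = d+1-l$ and $\eta_{d+1} = +1$, keeping $\eta_i = \epsilon_i$ for $i \in [d]$, and relabeling
\[
\sigma_i \ = \ \begin{cases} \pi_i & \text{if } \pi_i < d+1-l, \\ \pi_i + 1 & \text{if } \pi_i \geq d+1-l, \end{cases}
\]
for $i \in [d]$. Since $\pi_i \mapsto \sigma_i$ is the unique order-preserving bijection $[d] \to [d+1] \setminus \{d+1-l\}$, the map $\Phi$ is indeed a bijection onto its stated target.

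Next I would verify $\des(\sigma, \eta) = \des_l(\pi, \epsilon)$. For a descent index $i \in \{0, 1, \dots, d-1\}$, using the conventions $\sigma_0 = \pi_0 = 0$ and $\eta_0 = \epsilon_0 = +1$, the comparison between $\eta_i \sigma_i$ and $\eta_{i+1} \sigma_{i+1}$ has the same outcome as that between $\epsilon_i \pi_i$ and $\epsilon_{i+1} \pi_{i+1}$: when $\epsilon_i$ and $\epsilon_{i+1}$ differ in sign (including the case $i = 0$) only those signs determine the outcome, and when they agree the monotone relabeling preserves the strict order between $\pi_i$ and $\pi_{i+1}$. Hence the intersections of $\Des(\sigma, \eta)$ and $\Des(\pi, \epsilon)$ with $\{0, 1, \dots, d-1\}$ coincide.

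The one remaining check is at position $d$, and this is where the $l$-twist enters. Since $\eta_{d+1} \sigma_{d+1} = d+1-l > 0$, a descent at position $d$ of $(\sigma, \eta)$ requires $\epsilon_d = +1$ together with $\sigma_d > d+1-l$; by the definition of $\sigma_d$, the latter is equivalent to $\pi_d \geq d+1-l$, and combined with $\pi_d \leq d$ this is exactly the condition $d+1-l \leq \epsilon_d \pi_d \leq d$ that causes $\Des_l(\pi, \epsilon)$ to adjoin the index $d$. The boundary case $l = 0$ stays consistent: then $d+1-l = d+1 > d$, both conditions are vacuous, and $\sigma_d \leq d < d+1 = \sigma_{d+1}$ indeed produces no descent at $d$. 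The main obstacle here is simply the careful bookkeeping at position $d$; everything else is a direct transcription of the type-$A$ argument.
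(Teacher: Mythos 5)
Your proof is correct and is exactly the type-$B$ analogue of the paper's proof of Lemma~\ref{lem:jdescents}, which is what the paper intends (the lemma is stated in the paper without explicit proof, with the preceding remark that it ``parallels'' that lemma). The bijection $\Phi$, the order-preserving relabeling of the permutation letters, and the careful check at position $d$ all match the intended argument.

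One small remark on wording: in your analysis of the indices $i \in \{0,\dots,d-1\}$ you lump the case $i=0$ with the ``signs differ'' case, but when $\epsilon_1 = +1$ the signs at positions $0$ and $1$ agree. This does not affect the conclusion, since with $\sigma_0 = \pi_0 = 0$ the order-preserving relabeling (extended to fix $0$) handles $i=0$ exactly as it handles the other ``signs agree'' cases; it would read more cleanly if you simply treated $i=0$ uniformly with $i\geq 1$ rather than as a special instance of the mixed-sign case.
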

\begin{proof}[Proof of Theorem~\ref{thm:Ehrhart_B_ell}]
We use a half-open decomposition of $[-1,1]^d$ induced by the hyperplanes given by $x_i=\pm x_j$ and $x_i=0$, for $0\leq i < j\leq d$. As before, we obtain a decomposition into half-open unimodular simplices

\begin{align*}
\triangle_{(\piep)}^{d, l} \ &= \
	\begin{Bmatrix}
		\x \in [-1, 1]^d_{l} : \,
		0 \le \epsilon_{1} x_{\pi_{1}} \le \dots \le \epsilon_{d} x_{\pi_{d}} \le 1 \\[0.3em]
		\text{with } \epsilon_{i} x_{\pi_i} < \epsilon_{i+1} x_{\pi_{i+1}} \text{ when } i \in \Des(\piep) \\[0.3em]
	\end{Bmatrix}\\
   &= \ \begin{Bmatrix}
		\x \in \R^{d} : \,
		0 \le \epsilon_{1} x_{\pi_{1}} \le \dots \le \epsilon_{d} x_{\pi_{d}} \le 1 \\[0.3em]
		\text{with } \epsilon_{i} x_{\pi_i} < \epsilon_{i+1} x_{\pi_{i+1}} 
		\text{ when } i \in \Des_{l}(\piep) \\[0.3em]
		\text{and } \epsilon_{d} x_{\pi_d} < 1 \text{ when } d \in 
		\Des_{l}(\piep) \\[0.3em]
	\end{Bmatrix} .
\end{align*}
The number of missing facets of $\triangle_{(\piep)}^{d, l}$ is therefore 
$\des_{l}(\piep)$.
Observe that
\[
	[-1,1]_{l}^d \ = \biguplus_{(\piep) \in B_d} \triangle_{(\piep)}^{d, l}
\]
as a disjoint union.
Therefore, 
\[
\Ehr \left( [-1,1]_{l}^d, t \right)
\ = \ \sum_{\sigma \in S_d} \Ehr \left( \triangle_{(\piep)}^{d, l}, t \right) \ 
= \ \frac{ \sum_{\sigma \in S_d} t^{\des_{l}(\piep)} }{(1-t)^{d+1}} \ = \ 
\frac{B_{l+1}(d+1, t)}{(1-t)^{d+1}} \, ,
\]
where the last equality holds by Lemma~\ref{lem:bijection_descents}.
\end{proof}
\begin{proof}[Proof of Proposition~\ref{thm:typeB}]
By Theorem~\ref{thm:h_typeB}, the Ehrhart $h^\ast$-polynomial of $[-1,1]_l^d$ 
equals $B_{l+1}(d+1,t)$. Theorem~\ref{thm:halfopenparallelepipeds} allows us 
to 
give an expression in terms of $(A,j)$-Eulerian polynomials: observe that 
$[-1,1]_l^d$ is lattice 
isomorphic to $\halflozenge ([l])$ for $v_1=2e_1, v_2=2e_2,\ldots, 
v_d=2e_d$. 
Since $| \Box (K) \cap \mathbb{Z}^d|=1$ for 
all $K\subseteq [d]$ and by simple counting,
\[
  B_{l+1}(d+1,t)
  \ = \ \sum _{K\subseteq [d]} \left| \Box (K) \cap \mathbb{Z}^d \right| 
  A_{|[l]\cup K|+1}(d+1,t)
  \ = \ 2^l\sum _{j=0}^{d-l}{d-l\choose j}A_{j+l+1}(d+1,t) \, . \qedhere
\]
\end{proof}

Up to lattice translation, every half-open type-$B$ parallelepiped equals  
$2 \, \halflozenge (I)$ for some $v_1,\ldots, v_d\in \mathbb{Z}^d$.
\begin{thm}
    \label{thm:Ehrhart_lcs}
Let $\phi$ be a $\mathbb{Z}^d$-valuation. Then
    \[
    \Ehr^\phi \left( 2 \, \halflozenge (I),t\right) 
    \ = \ \frac{ \sum_{K \subseteq [d]} b_\phi(K) 
        \, B_{|I \cup K| +1}(d+1, t) }{(1-t)^{d+1}} \, .
    \]
\end{thm}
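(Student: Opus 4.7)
The plan is to mimic the proof of Theorem~\ref{thm:halfopenparallelepipeds}, but with the extra factor of $2$ tracked carefully so that $(2n)^m(1+2n)^{d-m}$ emerges as the natural ``cube'' Ehrhart polynomial; the computation will then close up via Theorem~\ref{thm:h_typeB} rather than Corollary~\ref{cor:Ehrhartfunctioncube}.

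First I would rewrite $\Ehr^\phi(2\,\halflozenge(I),t) = \sum_{n \ge 0} \phi\!\left((2n)\,\halflozenge(I)\right) t^n$ and invoke Lemma~\ref{lem:Ehrhartfunction} applied to the dilation factor $2n$, obtaining
\[
\phi\!\left((2n)\,\halflozenge(I)\right) \ = \ \sum_{J \supseteq I} (2n)^{|J|} \, \phi(\Pi(J)).
\]
Next I would expand $\phi(\Pi(J)) = \sum_{K \subseteq J} b_\phi(K)$ using Lemma~\ref{lem:boxpartition}, swap the order of summation over $K$ and $J$, and collect the $K$ and $I$ indices into $I \cup K$, yielding
\[
\Ehr^\phi(2\,\halflozenge(I),t)
\ = \ \sum_{K \subseteq [d]} b_\phi(K) \sum_{n \ge 0} t^n \sum_{J \supseteq I \cup K} (2n)^{|J|}.
\]

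For the innermost sum, setting $m := |I \cup K|$ and summing over $S := J \setminus (I \cup K) \subseteq [d] \setminus (I \cup K)$ yields
\[
\sum_{J \supseteq I \cup K} (2n)^{|J|} \ = \ (2n)^m (1+2n)^{d-m}.
\]
The key observation is that $(2n)^m(1+2n)^{d-m}$ is exactly the Ehrhart counting function of the half-open $\pm 1$-cube $[-1,1]^d_m$: the last $m$ coordinates of a lattice point in $[-n,n]^d \setminus \bigcup_{i=d+1-m}^d \{x_i = n\}$ each have $2n$ admissible values while the remaining $d-m$ coordinates have $2n+1$. Hence Theorem~\ref{thm:h_typeB} gives
\[
\sum_{n \ge 0} t^n (2n)^m (1+2n)^{d-m} \ = \ \Ehr\!\left([-1,1]^d_m, t\right) \ = \ \frac{B_{m+1}(d+1,t)}{(1-t)^{d+1}},
\]
and substituting this back with $m = |I \cup K|$ produces precisely the claimed closed form. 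No genuine obstacle arises: the only nontrivial ingredient is the recognition that the doubling inside $2\,\halflozenge(I)$ transforms the $n^{|J|}$ of Lemma~\ref{lem:Ehrhartfunction} into the type-$B$ cube count, which in turn lets Theorem~\ref{thm:h_typeB} close the argument in the same way Corollary~\ref{cor:Ehrhartfunctioncube} closed the argument in Theorem~\ref{thm:halfopenparallelepipeds}.
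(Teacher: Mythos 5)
Your proposal is correct and follows essentially the same route as the paper: apply Lemma~\ref{lem:Ehrhartfunction} at the dilation factor $2n$, expand via Lemma~\ref{lem:boxpartition}, swap sums, recognize $\sum_{J\supseteq I\cup K}(2n)^{|J|} = (2n)^{|I\cup K|}(1+2n)^{d-|I\cup K|}$ as the Ehrhart counting function of $[-1,1]^d_{|I\cup K|}$, and close with Theorem~\ref{thm:h_typeB}. The only cosmetic difference is that you verify the identification with $[-1,1]^d_m$ by direct lattice-point counting, whereas the paper cites the lattice isomorphism $[-1,1]^d_l \cong 2\,C^d_l$ together with Corollary~\ref{cor:Ehrhartfunctioncube}.
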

\begin{proof}
Since $[-1, 1]^d_{l}$ is lattice isomorphic to $2\,C^d_{l}$, by Corollary 
\ref{cor:Ehrhartfunctioncube}
\[
\ehr_{ [-1,1]^d_{l} } (n) \ = \ \sum_{ [l] \subseteq J 
\subseteq [d] } (2n)^{|J|} \, .
\]
By Lemmas~\ref{lem:Ehrhartfunction} and~\ref{lem:boxpartition},
    \begin{align*}
        \Ehr^\varphi \left( 2 \, \halflozenge (I),t \right)\ &= \ \sum _{n=0}^{\infty} t^n\sum 
        _{J \supseteq I} (2n)^{\left| J\right|} \varphi ( \Pi (J))\\
        &= \ \sum _{n=0}^{\infty}t^n \sum _{J \supseteq I}(2n)^{\left| 
            J\right|}\sum _{K \subseteq J} b_{\varphi}(K)\\
        &= \ \sum _{K\subseteq [r]}b_{\varphi}(K)\sum _{n=0}^{\infty}t^n\sum 
        _{J \supseteq I\cup K}(2n)^{\left| J\right|} \, ,
    \end{align*}
    and so the claim follows from Theorem~\ref{thm:h_typeB}.
\end{proof}
\begin{cor}
\label{cor:Ehrhart_lcs}
	The $h^\ast$-polynomial with respect to any combinatorially positive 
	valuation of a half-open type-$B$ parallelepiped has alternatingly 
	increasing coefficients.
\end{cor}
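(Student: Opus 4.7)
The plan is to combine Theorem~\ref{thm:Ehrhart_lcs} with the alternating increase of the refined type-$B$ Eulerian polynomials established in Theorem~\ref{thm:typeBalternatinglyincr}. First I would apply Theorem~\ref{thm:Ehrhart_lcs} to an arbitrary half-open type-$B$ parallelepiped $2\,\halflozenge(I)$ to obtain the explicit decomposition
\[
h^{\phi}\bigl(2\,\halflozenge(I)\bigr)(t) \ = \ \sum_{K\subseteq [d]} b_{\phi}(K)\, B_{|I\cup K|+1}(d+1,t).
\]
Because $\phi$ is combinatorially positive, Theorem~\ref{thm:combpositive} forces $b_{\phi}(K)=\phi(\Box(K))\ge 0$ for every $K\subseteq [d]$, so the right-hand side is a \emph{nonnegative} linear combination of $(B,l)$-Eulerian polynomials $B_{l}(d+1,t)$ with $1\le l\le d+1$.

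Next I would invoke Theorem~\ref{thm:typeBalternatinglyincr}, which says that each $B_l(d+1,t)$, viewed as a polynomial of degree $d$, is alternatingly increasing. The remaining step is the observation that the property of being alternatingly increasing of degree $d$ is defined by the finite list of linear inequalities $h_0\le h_d\le h_1\le h_{d-1}\le\cdots \le h_{\lfloor (d+1)/2\rfloor}$ on the coefficient vector. These inequalities are preserved under nonnegative linear combinations, so summing the polynomials $b_{\phi}(K)\,B_{|I\cup K|+1}(d+1,t)$ yields an alternatingly increasing polynomial, which is precisely the claim.

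There is no serious obstacle, since all the heavy lifting has been done: Theorem~\ref{thm:Ehrhart_lcs} supplies the structural decomposition, Theorem~\ref{thm:combpositive} supplies the sign information, and Theorem~\ref{thm:typeBalternatinglyincr} supplies the alternating increase of each summand. The only point that deserves a line of explanation is the closure of the alternatingly increasing cone under nonnegative combinations, which follows immediately from Lemma~\ref{lem:decomp_poly} as well: if each summand decomposes as $a_K(t)+t\,b_K(t)$ with $a_K,b_K$ palindromic and unimodal, then the weighted sum decomposes into palindromic parts $\sum_K b_{\phi}(K)\,a_K(t)$ and $\sum_K b_{\phi}(K)\,b_K(t)$, each of which is unimodal because nonnegative sums of unimodal palindromic polynomials with a common center of symmetry remain unimodal.
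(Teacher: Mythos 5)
Your proof is correct and follows exactly the same route as the paper: combine the decomposition from Theorem~\ref{thm:Ehrhart_lcs} with the nonnegativity of $b_\phi(K)$ from Theorem~\ref{thm:combpositive} and the alternating increase of the $(B,l)$-Eulerian polynomials from Theorem~\ref{thm:typeBalternatinglyincr}, then use closure of the alternatingly increasing property under nonnegative linear combinations of degree-$d$ polynomials. The paper leaves that last closure step implicit, whereas you justify it via Lemma~\ref{lem:decomp_poly}; that is a welcome but nonessential elaboration.
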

\begin{proof}
Since $\phi$ is combinatorially positive, by Theorems~\ref{thm:combpositive} and~\ref{thm:Ehrhart_lcs}, the 
$h^\ast$-polynomial is a nonnegative linear combination of $(B,l)$-Eulerian 
polynomials. These polynomials are alternatingly increasing (Theorem~\ref{thm:typeBalternatinglyincr}), and so is every nonnegative linear 
combination.
\end{proof}
\begin{cor}\label{cor:typeBparallelepipedalternatinglyincr}
The $h^\ast$-polynomial of a type-$B$ zonotope has 
alternatingly increasing coefficients.
\end{cor}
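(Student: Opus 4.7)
The plan is to mimic the proof of Theorem~\ref{thm:unimodalzonotope}, replacing ordinary parallelepipeds with type-$B$ ones and invoking Corollary~\ref{cor:Ehrhart_lcs} in place of Corollary~\ref{cor:unimodhalfopen}. The key observation is that a type-$B$ zonotope $Z$ generated by $v_1,\dots,v_m \in \mathbb{Z}^d$ equals, up to a lattice translation, $2Z'$, where $Z' := \{\sum \mu_i v_i : 0 \le \mu_i \le 1\}$ is the ordinary lattice zonotope generated by the same vectors.

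First, apply Shephard's theorem (Theorem~\ref{thm:Shephard}) to obtain a dissection $Z' = P_1 \cup \cdots \cup P_k$ into lattice parallelepipeds; scaling by $2$ yields a dissection $Z = 2P_1 \cup \cdots \cup 2P_k$ into type-$B$ parallelepipeds. Picking a generic $q \in \relint(Z)$ and applying Lemma~\ref{lem:ho} produces the disjoint decomposition $Z = \biguplus_i \half{q}{2P_i}$. The argument of Proposition~\ref{lem:partition} carries over verbatim to show that each piece $\half{q}{2P_i}$ is, up to lattice translation, a half-open type-$B$ parallelepiped $2\halflozenge(I_i)$ for some $I_i \subseteq [d]$ and some independent set of lattice vectors.

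By Corollary~\ref{cor:Ehrhart_lcs} each $h^\phi(\half{q}{2P_i})(t)$ is alternatingly increasing. To conclude, it suffices to observe that the class of alternatingly increasing polynomials of a fixed degree $d$ is closed under sums: writing each piece via Lemma~\ref{lem:decomp_poly} as $a_i(t) + t\, b_i(t)$ with $a_i$, $b_i$ palindromic and unimodal with centers of symmetry $\tfrac{d}{2}$ and $\tfrac{d-1}{2}$ respectively, the sums $\sum_i a_i(t)$ and $\sum_i b_i(t)$ remain palindromic with the same centers of symmetry, and they remain unimodal because palindromic unimodal polynomials sharing a peak location sum to a palindromic unimodal polynomial. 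Applying Lemma~\ref{lem:decomp_poly} to $h^\phi(Z)(t) = \sum_i a_i(t) + t\sum_i b_i(t)$ then gives the claim.

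The main technical point is verifying that the half-open pieces $\half{q}{2P_i}$ genuinely have the shape $2\halflozenge(I_i)$, and not merely $\halflozenge(I_i)$ for arbitrary generators, so that the type-$B$ version Corollary~\ref{cor:Ehrhart_lcs} (rather than the weaker Corollary~\ref{cor:unimodhalfopen}) can be invoked. This follows from the fact that every edge vector of every parallelepiped $P_i$ in Shephard's subdivision of $Z'$ is a $\mathbb{Z}$-linear combination of the generators $v_1,\dots,v_m$, so after scaling by $2$ each edge vector of $2P_i$ is twice a lattice vector, making $2P_i$ a type-$B$ parallelepiped in the required strict sense.
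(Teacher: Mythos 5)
Your proof is correct and follows the same route as the paper: dissect the type-$B$ zonotope into type-$B$ parallelepipeds (via Shephard's theorem applied to $Z'$ and scaling by $2$), pass to a half-open decomposition via Lemma~\ref{lem:ho}, invoke Corollary~\ref{cor:Ehrhart_lcs} on each piece, and close under sums. Two small remarks. First, the closure of alternatingly increasing polynomials (of fixed degree $d$) under nonnegative linear combinations is immediate without Lemma~\ref{lem:decomp_poly}: the defining conditions $h_0 \le h_d \le h_1 \le h_{d-1} \le \cdots \le h_{\lfloor (d+1)/2\rfloor}$ are termwise linear inequalities, so they are trivially preserved by addition; your detour through the palindromic decomposition works but is unnecessary. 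Second, the ``main technical point'' you flag is even easier than you suggest: the parallelepipeds in Shephard's subdivision of $Z'$ are generated by linearly independent \emph{subsets} of $\{v_1,\dots,v_m\}$ itself, not merely by $\mathbb{Z}$-linear combinations, so after doubling each $2P_i$ is literally the type-$B$ parallelepiped $\{\sum \lambda_j (2v_j): -1 \le \lambda_j \le 1\}$ on a subset of the $v_j$ (up to lattice translation), which is exactly what Corollary~\ref{cor:Ehrhart_lcs} needs.
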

\begin{proof}
From Theorem~\ref{thm:Shephard} and Corollary~\ref{lem:partition} we 
see that every type-$B$ zonotope can be partitioned into half-open 
type-$B$ parallelepipeds, which are up to translation all of the form 
$2 \, \halflozenge (I)$ for some spanning vectors. By Corollary 
\ref{cor:typeBparallelepipedalternatinglyincr} every such half-open type-$B$ 
parallelepiped has an alternatingly increasing $h^\ast$-polynomial, and so does 
their sum which equals the $h^\ast$-polynomial of the zonotope.
\end{proof}

\subsection{Matroidal aspects of $h^\ast$-polynomials}\label{sec:matroidalasp}
The terminology used in this section originates from matroid theory;
see, e.g., \cite{oxley2006matroid}. Let  
$V=\{v_1,\ldots, v_n\} \subset \mathbb{Z}^d$ be a set of 
vectors, and $v_1<\cdots < v_n$ be a fixed order on the elements of $V$. Let 
$\mathcal{I}$ denote the set of independent subsets of $V$ and $\mathcal{B}$ be 
the collection of maximally independent subsets (i.e., bases). Without loss of generality, in 
the sequel we assume that $V$ spans $\mathbb{R}^d$. For simplicity, we identify 
$v_i$ with $i\in [n]$ in the sequel. The order on 
$[n]$ induces an order on $\mathcal{B}$, namely 
$B_1<B_2$ whenever $B_1$ is 
lexicographically smaller than $B_2$. For 
every independent set $I\in 
\mathcal{I}$, we define $\lfloor I \rfloor := \min _{B\in 
\mathcal{B}}\{I\subseteq B\}$, i.e., $\lfloor I \rfloor$ is the smallest basis 
that contains $I$. An element $i$ in a basis $B$ is called {\bf internally passive}
if there is an element $j<i$ with $j\notin B$, such that $\{j\}\cup B\setminus 
\{i\}$ is a basis. In other words, $i$ can be exchanged with a smaller 
element $j$. We denote the set of internally passive elements of $\mathcal{B}$ 
by $\IP (B)$. Note that, in particular, $\IP (B) \subseteq B$.

\begin{lem}\label{lem:shelling}
    Let $I\in \mathcal{I}$ and $B\in \mathcal{B}$. If $\lfloor I \rfloor \neq 
    B$ then there is an $i\in \IP (B)$ such that $I\subseteq B\setminus \{i\}$.
\end{lem}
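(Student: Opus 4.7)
The plan is to produce the desired internally passive element of $B$ by comparing $B$ to the lex-smallest basis $B_0 := \lfloor I \rfloor$ containing $I$ and applying a single basis exchange. Note first that we must have $I \subseteq B$, since otherwise the conclusion $I \subseteq B \setminus \{i\}$ cannot hold for any $i$, so I tacitly assume this. Because $B_0 \ne B$ by hypothesis and $B_0$ is lex-minimal among bases containing $I$, we have $B_0 < B$ in the lexicographic order on $\mathcal{B}$ induced by the fixed ordering of $V$.

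Next I would let $j$ be the minimum element of the symmetric difference $(B_0 \setminus B) \cup (B \setminus B_0)$. By the definition of the lex order on bases, the strict inequality $B_0 < B$ forces $j \in B_0 \setminus B$. Applying the basis exchange axiom to $B_0$, $B$, and $j$, one obtains an element $i \in B \setminus B_0$ such that $(B \setminus \{i\}) \cup \{j\}$ is again a basis.

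It remains to verify that $i \in \IP(B)$ and that $I \subseteq B \setminus \{i\}$. The element $i$ lies in $B \setminus B_0$, which is contained in the symmetric difference $(B_0 \setminus B) \cup (B \setminus B_0)$, so minimality of $j$ gives $j \le i$; since $j \in B_0 \setminus B$ and $i \in B \setminus B_0$ are distinct, in fact $j < i$. Combined with $j \notin B$ and the fact that $(B \setminus \{i\}) \cup \{j\}$ is a basis, this shows $i \in \IP(B)$. Finally, $I \subseteq B_0$ together with $i \notin B_0$ forces $i \notin I$, hence $I \subseteq B \setminus \{i\}$, completing the argument.

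The proof is essentially a one-line application of basis exchange and I do not anticipate any serious obstacle. The main conceptual point is the correct choice of $j$: taking it to be the minimum of $B \triangle \lfloor I \rfloor$ is precisely what guarantees simultaneously that $j < i$ (so that $i$ is internally passive) and that $i \notin I$ (since the exchanged element $i$ is forced outside $\lfloor I \rfloor$, which contains $I$).
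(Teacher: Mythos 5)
Your proof is correct and follows essentially the same route as the paper's: use the lex-minimality of $\lfloor I\rfloor$ to extract the smallest element $j$ of the symmetric difference, which lands in $\lfloor I\rfloor\setminus B$, and then apply a basis exchange to bring $j$ into $B$ in place of some $i\in B\setminus\lfloor I\rfloor$, giving $j<i$ and hence $i\in\IP(B)$ with $i\notin I$. The paper first reduces WLOG to the case $\lfloor I\rfloor\cap B=I$, but that step is cosmetic and your argument bypasses it cleanly.
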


\begin{proof}
    Without loss of generality we may assume that $\lfloor I \rfloor \cap B=I$. 
    Since $\lfloor I \rfloor \neq B$, there is $j\in \lfloor I \rfloor 
    \setminus I$ that is smaller than all elements in $B\setminus I$. Since 
    $j\cup I\in \mathcal{I}$, by Steinitz's Exchange Lemma, there exists $i\in 
    B\setminus I$ such that $j\cup B\setminus \{i\}$ is a basis, in particular, 
    $i\in \IP (B)$ and moreover $I\subseteq B\setminus \{i\}$.
\end{proof}

\begin{lem}\label{lem:Imax}
    Let $I\in \mathcal{I}$ and $B\in \mathcal{B}$ such that $I\subseteq B$. Then
    \[
    \lfloor I \rfloor =B \quad \Longleftrightarrow \quad \IP (B) \subseteq I \, .
    \]
\end{lem}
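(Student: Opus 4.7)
The plan is to prove both directions by contrapositive-style arguments, using Lemma~\ref{lem:shelling} for the harder direction ($\Leftarrow$) and a direct lex-comparison exchange argument for ($\Rightarrow$).

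For the forward direction, I would assume $\lfloor I \rfloor = B$ and take an arbitrary $i \in \IP(B)$. Suppose toward contradiction that $i \notin I$. By definition of internal passivity, there is some $j < i$ with $j \notin B$ such that $B' := (B \setminus \{i\}) \cup \{j\}$ is a basis. Since $i \notin I$ and $I \subseteq B$, we still have $I \subseteq B'$. The symmetric difference $B \triangle B' = \{i, j\}$, and the smaller element $j$ lies in $B'$, so $B'$ precedes $B$ lexicographically. This contradicts $B = \lfloor I \rfloor$, so $i \in I$, proving $\IP(B) \subseteq I$.

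For the reverse direction, I would assume $\IP(B) \subseteq I$ and suppose toward contradiction that $\lfloor I \rfloor \neq B$. Then Lemma~\ref{lem:shelling} immediately supplies an element $i \in \IP(B)$ with $I \subseteq B \setminus \{i\}$; in particular $i \notin I$. But this contradicts the hypothesis $\IP(B) \subseteq I$, so $\lfloor I \rfloor = B$.

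The only subtle point I expect is confirming the lex-order comparison in the forward direction, namely that swapping one element of $B$ for a strictly smaller one outside $B$ produces a lexicographically smaller basis; this is immediate once one notes that the symmetric difference is exactly $\{i,j\}$. No serious obstacle otherwise — Lemma~\ref{lem:shelling} does all the work on the other side.
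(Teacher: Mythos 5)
Your proof is correct and essentially matches the paper's: the forward direction is the same exchange argument (you simply unpack the paper's compressed claim that $\lfloor B\setminus\{i\}\rfloor \neq B$ into the explicit basis swap and lex-comparison), and the backward direction invokes Lemma~\ref{lem:shelling} just as the paper does. Your backward argument is in fact marginally more direct, since you apply Lemma~\ref{lem:shelling} to $I$ itself rather than first reducing to the special case $\lfloor \IP(B)\rfloor = B$ as the paper does.
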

\begin{proof}
    For the forward direction, suppose that there is an 
    $i\in \IP (B)$ that is not contained in $I$. Since $i\in \IP (B)$ we obtain 
    $\lfloor B\setminus \{i\}\rfloor 
    \not = B$. Because $I\subseteq B\setminus 
    \{i\}$ it also follows that $\lfloor 
    I\rfloor \not = B$, which is a contradiction.
    
    For the backward direction, 
    it suffices to prove $\lfloor \IP (B) \rfloor =B$. Suppose 
    $\lfloor \IP (B) \rfloor \not =B$, then by Lemma~\ref{lem:shelling} there 
    exists an $i\in \IP (B)$ with $ \IP (B)  \subseteq B\setminus\{i\}$, a 
    contradiction.
\end{proof}
In the sequel we freely make use of the notation from Section~\ref{sec:halfopenparallelepiped}. The following lemma by Stanley~\cite{stanley} 
(with a proof by Ziegler) was used to prove Theorem \ref{thm:stanleyEhrhart}. (Compare also~\cite{betke1986application}.)
\begin{lem}[{\cite[Lemma 2.1]{stanley}}]\label{lem:decompstanley}
Let $Z$ be the zonotope generated by $V$. Then
\[
    Z \ = \ \biguplus _{I\in \mathcal{I}} \Pi(I)
\]
up to translation of each half-open parallelepiped on the right-hand side.
\end{lem}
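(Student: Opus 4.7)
The plan is to prove the decomposition via a canonical-section argument based on a generic linear functional. Let $\pi:\mathbb{R}^n \to \mathbb{R}^d$ be the linear surjection defined by $\pi(e_i)=v_i$, so that $Z = \pi([0,1]^n)$. Fix a linear functional $\omega \in (\mathbb{R}^n)^{\ast}$ whose restriction to $\ker\pi$ is in general position, meaning $\omega$ is nonzero on every nonzero vector of $\ker\pi$ whose support is a circuit of the linear matroid of $V$.

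First I would establish a canonical section. For each $p \in Z$, the fiber $F_p := \pi^{-1}(p)\cap [0,1]^n$ is a nonempty polytope, and by the genericity of $\omega|_{\ker\pi}$ the functional $\omega$ attains its minimum over $F_p$ at a unique vertex $x^{\ast}(p)$. Setting $I(p) := \{\,i : 0 < x^{\ast}(p)_i < 1\,\}$, I would check that $\{v_i\}_{i \in I(p)}$ is linearly independent: if not, $\ker\pi$ would contain a nonzero vector $c$ supported on $I(p)$, and then $x^{\ast}(p) \pm \varepsilon c$ would both lie in $F_p$ for small $\varepsilon>0$, contradicting vertexhood.

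Next, for each $I \in \mathcal{I}$ I would identify the cell $C_I := \{p \in Z : I(p)=I\}$ and show that it equals a translate of $\Pi(I)$. Since $\{v_i\}_{i \in I}$ is linearly independent, $\pi$ is injective on each face of $[0,1]^n$ of the form $\{x : x_j = \sigma(j) \text{ for } j \notin I,\ x_i \in [0,1] \text{ for } i \in I\}$, so it suffices to show that there is a unique sign pattern $\sigma_I : [n]\setminus I \to \{0,1\}$ attained by $x^{\ast}(p)$ for every $p \in C_I$. For $j \notin I$ with $v_j \in \operatorname{span}\{v_i\}_{i \in I}$, there is a unique circuit $C_j \subseteq I \cup \{j\}$ of the matroid, giving a well-defined direction $c_j \in \ker\pi$ with $(c_j)_j = 1$; the minimization condition together with the feasibility constraint $x_j \in [0,1]$ then forces $\sigma_I(j)=1$ when $\omega \cdot c_j < 0$ and $\sigma_I(j)=0$ when $\omega \cdot c_j > 0$, and the genericity of $\omega$ rules out ties. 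For $j$ with $v_j \notin \operatorname{span}\{v_i\}_{i\in I}$, the same analysis is carried out after extending $I$ greedily to a basis, so that $j$ eventually participates in a circuit. With $\sigma_I$ thus determined, one obtains $C_I = \Pi(I) + \sum_{j:\sigma_I(j)=1} v_j$, and $p \mapsto I(p)$ partitions $Z$ into these cells, yielding $Z = \biguplus_{I \in \mathcal{I}} (\Pi(I) + t_I)$.

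The main obstacle is the extension argument for $j$ with $v_j \notin \operatorname{span}\{v_i\}_{i \in I}$, since in that case no circuit lives inside $I \cup \{j\}$ and the sign $\sigma_I(j)$ is not immediately forced by a one-step perturbation. The cleanest way to address this is to reduce to the basis case: every vertex $x^{\ast}(p)$ sits in an $(n{-}d)$-dimensional ambient affine fiber and is pinned down by its $n-|I(p)|$ tight inequality constraints simultaneously, so the signs $(\sigma(j))_{j \notin I}$ are jointly determined by an inductive matroid argument applied to any basis $B \supseteq I$, completing the proof.
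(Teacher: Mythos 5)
Your setup is sound as far as it goes: with $\omega$ nonzero on all circuit-supported vectors of $\ker\pi$ the minimum over each fiber is attained at a unique vertex $x^{\ast}(p)$, its fractional support $I(p)$ is independent, and your circuit perturbation does force the value of $x^{\ast}(p)_j$ for those $j\notin I$ with $v_j\in\operatorname{span}\{v_i\}_{i\in I}$. But the key claims that follow are false, and the gap you flag at the end cannot be closed. Take $V=\{e_1,e_2\}$, $Z=[0,1]^2$, so $\ker\pi=0$ and every fiber is a single point: then $C_{\emptyset}$ consists of all four vertices of the square and $C_{\{1\}}$ of two open horizontal edges, while the lemma demands exactly one piece per independent set, e.g.\ $[0,1]^2=\Pi(\{1,2\})\uplus(\Pi(\{1\})+e_2)\uplus(\Pi(\{2\})+e_1)\uplus(\Pi(\emptyset)+e_1+e_2)$. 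Two distinct things go wrong. First, when $v_j\notin\operatorname{span}\{v_i\}_{i\in I}$ (coloops, or any non-spanning $I$) there is no circuit available to force a sign, and indeed $x^{\ast}(p)_j$ takes both values $0$ and $1$ as $p$ ranges over $C_I$; no ``inductive matroid argument applied to a basis $B\supseteq I$'' can determine $\sigma_I(j)$, because it is simply not determined. Second, even where the pattern is forced, $C_I$ is cut out by \emph{strict} inequalities $0<x^{\ast}(p)_i<1$ for $i\in I$, so it is a relatively open box; it can never equal the half-open $\Pi(I)$, whose faces with some $\lambda_i=0$ lie in cells $C_{I'}$ with $I'\subsetneq I$. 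What your stratification actually produces is the partition of $Z$ into the relatively open faces of the fine zonotopal subdivision induced by $\omega$, in which a given independent set occurs many times, not once.

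Note also that the paper does not prove this lemma at all; it quotes Stanley's Lemma~2.1 (with Ziegler's proof). If you want a self-contained argument consistent with the rest of the paper, a workable route is: tile $Z$ into closed parallelepipeds indexed by the bases $\mathcal{B}$ (your generic $\omega$ does produce such a tiling via the $\omega$-minimal faces of the cube, or one can invoke Theorem~\ref{thm:Shephard}); make each tile half-open, either with a generic point as in Lemma~\ref{lem:ho} and Proposition~\ref{lem:partition} or with the lexicographic order, under which the missing facets of the tile of $B$ correspond exactly to $\IP(B)$; and then split each half-open tile as in the proof of Lemma~\ref{lem:Ehrhartfunction}, $\halflozenge(\IP(B))=\biguplus_{J:\,\IP(B)\subseteq J\subseteq B}\bigl(\Pi(J)+\sum_{i\in B\setminus J}v_i\bigr)$. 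Lemma~\ref{lem:Imax} then guarantees that each independent set $J$ arises from exactly one basis, namely $B=\lfloor J\rfloor$, so exactly one translate of $\Pi(J)$ appears for each $J\in\mathcal{I}$. That half-open bookkeeping, which decides to which cell each boundary stratum belongs, is precisely the mechanism your fractional-support cells are missing.
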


With Lemma \ref{lem:decompstanley}, \cite[Theorem 2.2]{stanley} extends 
immediately to calculating 
$\ehr^\phi_Z (n)$ for arbitrary $\Z^d$-valuations. We formally 
record 
this with the following proposition.
\begin{prop}
Let $Z$ be the zonotope generated by $V$, and let $\varphi$ be a 
$\Z^d$-valuation. Then
\[
\ehr^\phi_P (n) \ = \ \sum _{I\in \mathcal{I}} \phi (\Pi (I)) \, n^{|I|} .
\]
\end{prop}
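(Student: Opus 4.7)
The plan is to recycle Stanley's original proof of Theorem~\ref{thm:stanleyEhrhart} verbatim, observing that it only uses two abstract features of the discrete volume: additivity over disjoint lattice-polytope unions (via McMullen's inclusion-exclusion) and $\Z^d$-translation invariance. Both hold by hypothesis for the $\Z^d$-valuation $\phi$, so the argument should go through with no essential change.

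First I would apply Lemma~\ref{lem:decompstanley} and dilate by a positive integer $n$. Since $V\subset \Z^d$, the translations appearing in the decomposition $Z = \biguplus_{I\in\mathcal{I}} \Pi(I)$ can be taken in $\Z^d$, so that
\[
nZ \ = \ \biguplus_{I\in\mathcal{I}} \bigl( n\Pi(I) + n t_I \bigr)
\]
with $n t_I \in \Z^d$. Apply $\phi$ term by term using inclusion-exclusion on the disjoint union, and then invoke $\Z^d$-translation invariance to discard the $nt_I$:
\[
\ehr^\phi_Z(n) \ = \ \phi(nZ) \ = \ \sum_{I\in\mathcal{I}} \phi\bigl( n\Pi(I) \bigr) \, .
\]

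Next I would reduce each $\phi(n\Pi(I))$ to $n^{|I|}\phi(\Pi(I))$ by the tiling argument from the proof of Lemma~\ref{lem:Ehrhartfunction}: the half-open parallelepiped $n\Pi(I)$ decomposes as the disjoint union of the $n^{|I|}$ lattice translates $\Pi(I) + \sum_{i\in I} k_i v_i$ indexed by $(k_i)_{i\in I} \in \{0,1,\ldots,n-1\}^I$. Additivity together with translation invariance yields $\phi(n\Pi(I)) = n^{|I|}\phi(\Pi(I))$, and substituting this into the previous display proves the proposition.

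The only potential obstacle is the bookkeeping needed to verify that every translation appearing in Lemma~\ref{lem:decompstanley} lies in $\Z^d$; this is immediate from $V\subseteq \Z^d$, since the translates $t_I$ in Stanley's construction are integer linear combinations of the generators $v_i$. Thus the argument is genuinely routine once one has Lemma~\ref{lem:decompstanley} and the defining properties of a $\Z^d$-valuation.
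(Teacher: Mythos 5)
Your proposal is correct and is essentially the argument the paper has in mind: the paper simply states that Stanley's proof of Theorem~\ref{thm:stanleyEhrhart} "extends immediately" via Lemma~\ref{lem:decompstanley}, and your write-up spells out exactly that extension --- decompose $Z$ into lattice translates of the half-open pieces $\Pi(I)$, dilate, use additivity on the disjoint union together with $\Z^d$-translation invariance, and finish with the tiling identity $\phi(n\Pi(I)) = n^{|I|}\phi(\Pi(I))$ already used in the proof of Lemma~\ref{lem:Ehrhartfunction}. The only point worth a passing remark is that additivity over disjoint \emph{half-open} pieces rests on McMullen's inclusion-exclusion (which is how $\phi$ is defined on relatively open and half-open polytopes in Section~\ref{sec:valuations}), but this is exactly the convention the paper itself uses.
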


We are now able to prove Theorem \ref{thm:zonotopevaluation}.

\begin{proof}[Proof of Theorem \ref{thm:zonotopevaluation}]
    By Lemmas~\ref{lem:Ehrhartfunction} and~\ref{lem:Imax}, up to translation of 
    the half-open parallelepipeds, 
    \[
    Z \ = \ \biguplus _{B\in \mathcal{B}} \biguplus 
        _{I:\lfloor I 
        \rfloor =B} \Pi(I) \ = \ \biguplus _{B\in \mathcal{B}}\halflozenge 
        (\IP(B)) \, .
    \]
    The claim now follows from Theorem~\ref{thm:halfopenparallelepipeds}.
\end{proof} 
If the vectors in $V$ are the columns of a totally unimodular matrix, and $\phi 
=\ehrval$, the result of Theorem \ref{thm:zonotopevaluation} simplifies.
\begin{cor}
    Let $Z$ be the zonotope generated by the columns of a totally unimodular 
    matrix. Then 
    \[
    h^\ehrval (Z)(t)
    \ = \ \sum _{B\in \mathcal{B}}A_{|\IP 
        (B)|+1}(d+1,t) \, .
    \]
\end{cor}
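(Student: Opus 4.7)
The plan is to derive this as a direct consequence of Theorem~\ref{thm:zonotopevaluation} applied to the Ehrhart valuation $\phi = \ehrval$, with the totally unimodular hypothesis used only to pin down the values $b_\ehrval(I)$.

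First I would compute $\ehrval(\Pi(I))$ for every independent set $I \in \mathcal{I}$. Since the columns of a totally unimodular matrix form a set $V$ all of whose square submatrices have determinant in $\{0, \pm 1\}$, the argument from the proof of \cite[Theorem 2.2]{stanley} recalled in the paper shows that $|\Pi(I) \cap \Z^d|$ equals $g(I)$, the gcd of the maximal minors of the submatrix with columns indexed by $I$. For independent $I$ at least one such minor is nonzero, and all minors lie in $\{0, \pm 1\}$, so $g(I) = 1$. Thus $\ehrval(\Pi(I)) = 1$ for every $I \in \mathcal{I}$.

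Next, I would use Lemma~\ref{lem:boxpartition}, which gives $\ehrval(\Pi(I)) = \sum_{J \subseteq I} b_\ehrval(J)$. By Möbius inversion on the Boolean lattice of subsets of $I$, combined with the constant value $\ehrval(\Pi(I)) = 1$, we get
\[
b_\ehrval(I) \ = \ \sum_{J \subseteq I} (-1)^{|I| - |J|} \ = \ \begin{cases} 1 & \text{if } I = \emptyset, \\ 0 & \text{otherwise.} \end{cases}
\]
(One has to be a touch careful: a subset $J \subseteq I$ with $I$ independent is automatically independent, so $b_\ehrval(J) = \ehrval(\Box(J))$ is well-defined and the induction implicit in Möbius inversion goes through.)

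Finally, I would substitute this into Theorem~\ref{thm:zonotopevaluation}. Only the $I = \emptyset$ term survives in the outer sum, and since $\emptyset \subseteq B$ for every basis $B$, the inner sum ranges over all $B \in \mathcal{B}$ with $|\emptyset \cup \IP(B)| = |\IP(B)|$. This immediately yields
\[
h^\ehrval(Z)(t) \ = \ \sum_{B \in \mathcal{B}} A_{|\IP(B)|+1}(d+1, t),
\]
as claimed. There is no real obstacle here; the only nontrivial step is identifying $b_\ehrval$ via the total unimodularity hypothesis, and this follows from Stanley's gcd interpretation of lattice points in $\Pi(I)$ together with a one-line Möbius inversion.
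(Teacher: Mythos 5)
Your proof is correct and is exactly the derivation the paper intends (the paper states the corollary without a written proof, but the preceding sentence makes clear it is to be read off from Theorem~\ref{thm:zonotopevaluation} by simplifying $b_\ehrval$). The only ingredient beyond the theorem is the computation $b_\ehrval(I)=0$ for $I\neq\emptyset$, which you obtain by Möbius inversion from $|\Pi(I)\cap\Z^d|=g(I)=1$; equivalently, one can observe directly that a unimodular parallelepiped $\lozenge(I)$ has no lattice points other than its $2^{|I|}$ vertices, so $b_\ehrval(I)=|\Box(I)\cap\Z^d|=0$ — same content, slightly more geometric phrasing.
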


A set of vectors $V=\{v_1,\ldots, v_n\} \subset \mathbb{R}^d$ is 
called {\bf coloop free} if there is no $v_i$ that is contained in every 
maximally independent subset of $V$. Equivalently, $V$ is coloop free if every 
linearly independent subset of $V$ that is not a basis is contained in at 
least two different bases formed by elements in~$V$.
\begin{thm}\label{thm:coloopfree}
Let $V\subset \mathbb{Z}^d$ be coloop free, let 
$Z$ be the zonotope generated by $V$ and let $\varphi$ be a combinatorially 
positive valuation. Then $h^\phi (Z)(t)$ is alternatingly increasing.
\end{thm}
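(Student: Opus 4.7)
The plan is to apply Theorem~\ref{thm:zonotopevaluation} together with combinatorial positivity ($b_\phi(I)\geq 0$ by Theorem~\ref{thm:combpositive}) to reduce the claim to showing that for every $I\in\mathcal{I}$ the inner sum
\[
S_I(t) \,:=\, \sum_{B\in\mathcal{B},\,B\supseteq I} A_{|I\cup \IP(B)|+1}(d+1,t)
\]
is alternatingly increasing. Once this is established, $h^\phi(Z)(t)$ is a nonnegative linear combination of the $S_I(t)$'s, and nonnegative combinations of alternatingly increasing polynomials of equal degree are themselves alternatingly increasing, since the defining chain of inequalities $h_0\leq h_d\leq h_1\leq h_{d-1}\leq\cdots$ is preserved under such combinations.

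If $I$ is itself a basis, then $S_I(t)=A_{d+1}(d+1,t)$, which is alternatingly increasing by Lemma~\ref{lem:jEulerian_alt_incr}. For $I\in\mathcal{I}\setminus\mathcal{B}$, the plan is to isolate two canonical bases containing $I$. The lex-smallest, $\lfloor I\rfloor$, contributes the term $A_{|I|+1}(d+1,t)$ by Lemma~\ref{lem:Imax}. The lex-largest basis $B^*$ containing $I$ satisfies $B^*\setminus I\subseteq \IP(B^*)$: for each $i\in B^*\setminus I$, coloop-freeness supplies some exchange $i\leftrightarrow j$ with $j\notin B^*$ (hence $j\notin I$), and the resulting basis contains $I$; if $j>i$ the new basis would be lex-larger than $B^*$, contradicting its maximality, so $j<i$ is forced and $i\in \IP(B^*)$. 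Hence $B^*$ contributes $A_{d+1}(d+1,t)$, and these two contributions together are alternatingly increasing by Lemma~\ref{lem:altincr}, since $(|I|+1)+(d+1)=d+|I|+2\geq d+2$.

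It remains to handle the bases $B\supseteq I$ distinct from $\lfloor I\rfloor$ and $B^*$; their indices $|I\cup \IP(B)|+1$ lie in $[|I|+2,d+1]$. The plan is to match them greedily --- smallest index with largest, and so on --- so that each pair has index sum $\geq d+2$ (alternatingly increasing by Lemma~\ref{lem:altincr}) and any unpaired basis has index strictly above $(d+1)/2$ (alternatingly increasing by Lemma~\ref{lem:jEulerian_alt_incr}). This greedy matching succeeds provided the cumulative inequality
\[
\bigl|\{B\supseteq I : |I\cup \IP(B)|\leq\ell\}\bigr| \,\leq\, \bigl|\{B\supseteq I : |I\cup \IP(B)|\geq d-\ell\}\bigr|
\]
holds for every $\ell<d/2$. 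The natural route to this cumulative bound is contraction: $M/I$ inherits coloop-freeness from $V$ (any coloop of $M/I$ would lie in every basis of $M$ extending $I$ and hence be a coloop of $V$), which reduces the inequality to the case $I=\emptyset$ on a coloop-free matroid, where one constructs an injection from low-passivity bases to high-passivity ones by iterating the exchange argument used for $B^*$. The main obstacle is precisely this injection: the extremal pair $\lfloor I\rfloor\leftrightarrow B^*$ handles one level cleanly, but extending the construction through the intermediate passivity ranges requires a delicate matroidal argument that captures how coloop-freeness governs the interplay between the lex order on bases and the internal activity statistic.
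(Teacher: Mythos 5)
There is a genuine gap, which you yourself flag: after extracting the extremal bases $\lfloor I\rfloor$ and $B^*$, you need to pair the remaining bases so that each pair has index sum $\geq d+2$, and your proposed route via the cumulative inequality
\[
\bigl|\{B\supseteq I : |I\cup \IP(B)|\leq\ell\}\bigr| \;\leq\; \bigl|\{B\supseteq I : |I\cup \IP(B)|\geq d-\ell\}\bigr|
\]
is not established. This is not a small loose end --- the distribution of $|I\cup\IP(B)|$ over bases $B\supseteq I$ is exactly the combinatorial quantity being controlled, and nothing in the lemmas at hand (\ref{lem:shelling}, \ref{lem:Imax}, coloop-freeness) gives such a symmetry or majorization statement. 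Your construction for $B^*$ (using coloop-freeness to force a lex-decreasing exchange, so $B^*\setminus I\subseteq\IP(B^*)$) is correct and is the right kind of observation, but iterating it through the intermediate passivity levels would require a substantially new matroidal argument.

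The paper avoids this entirely by a symmetrization trick you did not consider. Since the zonotope $Z$ and hence $h^\phi(Z)(t)$ do not depend on the chosen linear order on $V$, one can average the expansion from Theorem~\ref{thm:halfopenparallelepipeds} over the original order and the reversed order: writing $\bIP(B)$ for internal passivity under the reverse order,
\[
h^\phi(Z)(t) \;=\; \frac{1}{2}\sum_{B\in\mathcal{B}}\sum_{I\subseteq B} b_\phi(I)\Bigl(A_{|I\cup\IP(B)|+1}(d+1,t)+A_{|I\cup\bIP(B)|+1}(d+1,t)\Bigr).
\]
Coloop-freeness then gives the pairing for free and basis-by-basis: for any $B$ and any $i\in B$ there is an exchange partner $j\neq i$, $j\notin B$, with $(B\setminus\{i\})\cup\{j\}$ a basis; either $j<i$ (so $i\in\IP(B)$) or $j>i$ (so $i\in\bIP(B)$). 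Hence $\IP(B)\cup\bIP(B)=B$, so $|\IP(B)|+|\bIP(B)|\geq d$ and thus $|I\cup\IP(B)|+1+|I\cup\bIP(B)|+1\geq d+2$ for every $I\subseteq B$. Each bracketed pair is then alternatingly increasing by Lemma~\ref{lem:altincr}, and $b_\phi(I)\geq 0$ finishes the proof. The key idea you are missing is precisely this: pair each term with its \emph{reverse-order} counterpart rather than with another term from the same ordering, which dissolves the distributional question you were stuck on.
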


\begin{proof}
    For all $B\in \mathcal{B}$ let $\bIP (B)$ denote the internally passive 
    elements of $B$ with respect to the reverse ordering $v_n<\ldots<v_1$.
    By Theorem~\ref{thm:halfopenparallelepipeds},
    \[
    h^\phi(Z)(t) \ = \ \frac{1}{2}\sum _{B\in \mathcal{I}}\sum_{I\subseteq 
        B}b_\phi(I)\left(A_{|I\cup \IP(B)|+1}(d+1,t)+A_{|I\cup 
        \bIP(B)|+1}(d+1,t)\right).
    \]
    Since $V$ is coloop free, for every $B\in \mathcal{B}$ and every $i\in B$ 
    there is $j\neq i$ such that $j\cup B\setminus i$ is a basis. Since 
    either $j<i$ or $j>i$ we obtain $i\in \IP (B) \cup \bIP (B)$. Therefore,   
    $|\IP (B)|+| 
    \bIP (B)|\geq |B|=d$ for every $B\in \mathcal{B}$, and hence
    \[
    |I\cup \IP (B)|+1+|I\cup \bIP (B)|+1 \ \ge \ d+2
    \]
    for all $I\subseteq B$. Since $\phi$ is combinatorially positive, $b_\phi 
    (I)\geq 0$ for all $I\subseteq B$ and therefore the claim follows from Lemma 
    \ref{lem:altincr}, since every nonnegative linear combination of 
    alternatingly increasing sequences is itself alternatingly increasing.
\end{proof} 

    \textbf{Acknowledgements:} We would like to thank Petter Br\"and\'en, Ben Braun, Felix Breuer, Aaron Dall, Martin Henk, Yvonne 
    Kemper, and Raman Sanyal for stimulating discussions and the anonymous referee for helpful comments. Matthias Beck was partially supported by the U.S.\ National Science Foundation (DMS-1162638).
    Katharina Jochemko was partially supported by a Hilda 
    Geiringer 
    Scholarship of the Berlin Mathematical School (BMS), and wants to thank the 
    BMS especially for funding her research stay at San Francisco State 
    University in Fall 2013, during which this project evolved. 
    Emily McCullough was partially supported by the U.S.\ National Science 
    Foundation (DGE-0841164).

\nocite{jochemko}
\nocite{mccullough}
\bibliographystyle{amsplain}
\bibliography{bib_jointpaper}


\end{document}